\documentclass[reqno,11pt]{amsart}
\usepackage{amsmath,amssymb}
\newtheorem{theorem}{Theorem}[section]
\newtheorem{corollary}[theorem]{Corollary}
\newtheorem{lemma}[theorem]{Lemma}
\newtheorem{proposition}[theorem]{Proposition}
\theoremstyle{definition}
\newtheorem{definition}[theorem]{Definition}
\theoremstyle{remark} \theoremstyle{remark}
\newtheorem{assumption}{Assumption}
\newtheorem{remark}[theorem]{Remark}
\numberwithin{equation}{section}
\DeclareMathOperator*{\esssup}{\mathrm{ess\,sup}}

\allowdisplaybreaks[3]

%opening
\title[The Microscopic Dynamics of a Spatial Ecological Model]{The Microscopic Dynamics of a Spatial Ecological Model}

\author{Yuri  Kondratiev}
\address{Fakut\"at f\"ur Mathematik, Universit\"at Bielefeld, Bielefeld D-33615, Germany and Interdisciplinary Center
for Complex Systems, Dragomanov University, Kyiv, Ukraine}
\email{kondrat@math.uni-bielefeld.de}

\author{ Yuri  Kozitsky}

\address{Instytut Matematyki, Uniwersytet Marii Curie-Sk{\l}odowskiej, 20-031 Lublin, Poland}
\email{jkozi@hektor.umcs.lublin.pl}

\begin{document}

\subjclass{92D25; 60J80; 82C22}%
\keywords{ Markov evolution, configuration space, stochastic
semigroup, sun-dual semigroup, spatial ecology, individual-based
model, correlation function, observable}
\begin{abstract}

The evolution of states of a spatial ecological model is studied.
The model describes an infinite population of point entities placed
in $\mathbb{R}^d$ which reproduce themselves at distant points
(disperse) and die with rate that includes a competition term. The
system's states are probability measures on the space of
configurations of entities, and their evolution is described by
means of a hierarchical chain of equations for the corresponding
correlation functions derived from the Fokker-Planck equation for
measures. Under natural conditions imposed on the model parameters
it is proved that the correlation functions evolve in a scale of
Banach spaces in such a way that each correlation function
corresponds to a unique sub-Poissonian state.  Some further
properties of the evolution of states constructed in this way are
also described.

\end{abstract}
\maketitle

\tableofcontents

\section{Introduction}
\subsection{Posing}
The development of a mathematical theory of complex living systems
is a challenging task of modern applied mathematics \cite{BB}. In
this paper we continue, cf. \cite{Dima,DimaN2,FKKK,K}, studying the
model introduced in \cite{BP1,BP3,Mu}. It describes an infinite
evolving population of identical point entities (particles)
distributed over $\mathbb{R}^d$, $d\geq 1$, which reproduce
themselves and die, also due to competition. In a sense, this is the
basic individual-based model in studying large ecological
communities (e.g., of perennial plants), see \cite{O} and \cite[page
1311]{N}. As is now commonly adopted, see, e.g., \cite{BP1,BP3,O},
the appropriate mathematical context for studying models of this
kind is provided by the theory of random point fields on
$\mathbb{R}^d$ in which populations are modeled as point
configurations constituting the set
\begin{equation} \label{C1}
 \Gamma = \{\gamma\subset\mathbb R^d :
 |\gamma\cap \Lambda|<\infty\text{ for any compact $\Lambda\subset\mathbb R^d$
 }\},
\end{equation}
where $|\cdot|$ denotes cardinality. It is equipped with a
$\sigma$-field of measurable subsets that allows one  to consider
probability measures on $\Gamma$ as states of the system. To
characterize such states one employs {\it observables} --
appropriate functions $F:\Gamma \rightarrow \mathbb{R}$. Their
evolution is obtained from the Kolmogorov equation
\begin{equation}
 \label{R2}
\frac{d}{dt} F_t = L F_t , \qquad F_t|_{t=0} = F_0, \qquad t>0,
\end{equation}
where the `generator' $L$ specifies the model. The states' evolution
is then obtained from the Fokker--Planck equation
\begin{equation}
 \label{R1}
\frac{d}{dt} \mu_t = L^* \mu_t, \qquad \mu_t|_{t=0} = \mu_0,
\end{equation}
related to that in (\ref{R2}) by the duality
\begin{equation*}
\int_{\Gamma} F_0 d
  \mu_t = \int_{\Gamma} F_t d
  \mu_0.
\end{equation*}
The generator of the model studied in this paper is
\begin{eqnarray}\label{R20}
(LF)(\gamma) &=& \sum_{x\in \gamma}\left[ m + E^{-}
 (x, \gamma\setminus x) \right]\left[F(\gamma\setminus x) - F(\gamma) \right]\\[.2cm]
&& + \int_{\mathbb{R}^d} E^{+} (y, \gamma) \left[ F(\gamma\cup y) -
F(\gamma) \right]dy, \nonumber
\end{eqnarray}
where
\begin{equation}
 \label{Ra20}
E^{\pm}(x, \gamma) := \sum_{y\in \gamma} a^{\pm} (x-y).
\end{equation}
The first summand in (\ref{R20}) corresponds to the death of the
particle located at $x$ occurring independently at rate $m\geq 0$
(intrinsic mortality) and under the influence of the other particles
in $\gamma$ -- at rate $E^{-}(x, \gamma\setminus x)\geq 0$
(competition).  The second term in (\ref{R20}) describes the birth
of a particle at $y\in \mathbb{R}^d$ occurring at rate $E^{+}(y,
\gamma)\geq 0$. In the sequel, we call $a^{-}$ and $a^{+}$ {\it
competition} and {\it dispersal kernels}, respectively. This model
plays a significant role in the mathematical theory of large
ecological systems, see, e.g., \cite{O} for a detailed discussion
and the references on this matter. The version of (\ref{R20}) with
$a^{-} \equiv 0$ is the {\it continuum contact model} studied in
\cite{KKP,KS}. Having in mind the results of these works, along with
purely mathematical tasks we aim at understanding the ecological
consequences of the competition taken into account in (\ref{R20}).

The problem of constructing spatial birth and death processes in
infinite volume was first studied by R. A. Holley and D. W. Stroock
in their pioneering work \cite{HS1978}, where a special case of
nearest neighbor interactions on the real line was considered. For
more general versions of continuum birth-and-death systems, the few
results known by this time were obtained under substantial
restrictions on the birth and death rates. This relates to  the
construction of a Markov process in \cite{Gar}, as well as to the
result obtained in \cite{DimaN2} in the statistical approach (see
below). In the present work, we make an essential step forward in
studying the model specified in (\ref{R20}) assuming only that the
kernels $a^{\pm}$ satisfy some rather mild condition.

The set of finite configurations $\Gamma_0$ is a measurable subset
of $\Gamma$. If $\mu$ is such that $\mu(\Gamma_0) =1$, then the
considered system is finite in this state. If $\mu_0$ in (\ref{R1})
has such a property, the evolution $\mu_0 \mapsto \mu_t$ can be
obtained directly from (\ref{R1}), see \cite{K}. In this case
$\mu_t(\Gamma_0) =1$ for all $t>0$. States of infinite systems are
mostly such that $\mu(\Gamma_0)=0$, which makes direct solving
(\ref{R1}) with an {\it arbitrary} initial state $\mu_0$ rather
unaccessible for the method existing at this time, cf. \cite{KKM}. In this work we
continue following the statistical approach, cf.
\cite{Berns,Dima,DimaN2,FKKK,KKM}, in which the evolution of states is
described as that of the corresponding correlation functions. To
briefly explain its essence let us consider the set of all compactly
supported  continuous functions $\theta:\mathbb{R}^d\to (-1,0]$.
For a probability measure $\mu$ on
$\Gamma$ its {\it Bogoliubov} functional \cite{FKO,TobiJoao} is defined as
\begin{equation}
  \label{I1}
B_\mu (\theta) = \int_{\Gamma} \prod_{x\in \gamma} ( 1 + \theta (x))
\mu( d \gamma),
\end{equation}
with $\theta$ running through the mentioned set of functions.
For  $\pi_\varkappa$ -- the homogeneous Poisson measure with
intensity $\varkappa>0$, (\ref{I1}) takes the form
\begin{equation*}
  %\label{I2}
B_{\pi_\varkappa} (\theta) = \exp\left(\varkappa
\int_{\mathbb{R}^d}\theta (x) d x \right).
\end{equation*}
In state $\pi_\varkappa$, the particles are independently
distributed over $\mathbb{R}^d$ with density $\varkappa$. The set of
{\it sub-Poissonian} states $\mathcal{P}_{\rm sP}$ is then defined
as that containing all the states $\mu$ for which $B_\mu$ can be
continued, as a function of $\theta$, to an exponential type entire
function on $L^1 (\mathbb{R}^d)$. This exactly means that $B_\mu$
can be written down in the form
\begin{eqnarray}
  \label{I3}
B_\mu(\theta) = \sum_{n=0}^\infty
\frac{1}{n!}\int_{(\mathbb{R}^d)^n} k_\mu^{(n)} (x_1 , \dots , x_n)
\theta (x_1) \cdots \theta (x_n) d x_1 \cdots d x_n,
\end{eqnarray}
where $k_\mu^{(n)}$ is the $n$-th order correlation function
corresponding to $\mu$. It is a symmetric element of $L^\infty
((\mathbb{R}^d)^n)$ for which
\begin{equation}
\label{I4}
  \|k^{(n)}_\mu \|_{L^\infty
((\mathbb{R}^d)^n)} \leq C \exp( \alpha n), \qquad n\in
\mathbb{N}_0,
\end{equation}
with some $C>0$ and $\alpha \in \mathbb{R}$. This guarantees that
$B_\mu$ is of exponential type.  One can also consider a wider class
of states, $\mathcal{P}_{\rm anal}$, by imposing the condition that
$B_\mu$ can be continued to a function on $L^1(\mathbb{R}^d)$
analytic in some neighborhood of the origin, see \cite{TobiJoao}.
In that case, the
estimate corresponding to (\ref{I4}) will contain $n!C e^{\alpha n}$
in its right-hand side. States $\mu \in \mathcal{P}_{\rm anal}$ are
characterized by strong correlations corresponding to `clustering'.
In the contact model the clustering does take place, see
\cite{KKP,KS} and especially \cite[Eq. (3.5), page 303]{Dima}.
Namely, in this model for each $t>0$ and $n\in \mathbb{N}$ the
correlation functions satisfy the following estimates
\[
{\rm const}\cdot n! c_t^n \leq k^{(n)}_t(x_1, \dots , x_n) \leq {\rm
const}\cdot n! C_t^n,
\]
where the left-hand inequality holds if all $x_i$ belong to a ball
of sufficiently small radius. If the mortality rate $m$ is big
enough, then $C_t \to 0$ as $t\to +\infty$.  That is, in the
continuum contact model the clustering persists even if the
population asymptotically dies out. With this regard, a paramount
question about the model (\ref{R20}) is whether the competition
contained in $L$ can suppress clustering. In short, the answer given
in this work is in affirmative provided the competition and
dispersal kernels satisfy a certain natural condition. They do
satisfy if $a^{-}$ is strictly positive in some vicinity of the
origin, and $a^{+}$ has finite range.

\subsection{Presenting the result}
\label{SS1.2}

In this work, for the model described in (\ref{R20}) and
(\ref{Ra20}) we obtain the following results: \vskip.1cm
\begin{itemize}
\item[ (i)] Under the condition on the kernels $a^{\pm}$ formulated in Assumption \ref{Ass1}
we prove in Theorem \ref{1tm} that the correlation functions evolve
$k_{\mu_0}^{(n)}\mapsto k_t^{(n)}$ in such a way that each
$k_t^{(n)}$ is the correlation function of a unique sub-Poissonian
measure $\mu_t$.
\item[(ii)] We give examples of the kernels $a^{\pm}$  which satisfy Assumption \ref{Ass1}.
These examples include kernels of finite range -- both short and
long dispersals (Proposition \ref{Ma2pn}),  and also Gaussian
kernels (Propositions \ref{Ma3pn}).
\item[(iii)] For the whole range of values of the intrinsic
mortality  rate $m$, in Theorem \ref{2tm} we obtain the following
bounds for the correlation functions holding for all $t\geq 0$:
\begin{eqnarray*}
&(i)& \ 0\leq k^{(n)}_t(x_1 ,\dots , x_n) \leq  C_\delta^n
\exp\left(n(\langle a^{+} \rangle -\delta)t \right), \quad 0\leq
m \leq \langle a^{+} \rangle, \\[.2cm]
&(ii)&  \ 0\leq k^{(n)}_t(x_1 ,\dots , x_n) \leq  C_\varepsilon^n
e^{-\varepsilon t}, \quad     m > \langle a^{+} \rangle,
\end{eqnarray*}
where $\langle a^{+} \rangle$ is the $L^1$-norm of $a^+$, $C_\delta$ and $C_\varepsilon$ are appropriate positive
constants, whereas $\delta< m$ and
$\varepsilon\in (\langle a^{+} \rangle, m)$ take any value in the mentioned sets. By
(\ref{I3}) these estimates give upper bounds for the type of
$B_{\mu_t}$. We describe also the pure death case where $\langle a^{+} \rangle=0$.
\end{itemize}
More detailed comments and comparison with the previous results on this model
are given in subsection \ref{SS3.3a} below.

\section{The Basic Notions}
\label{S2}

A detailed description of various aspects of the mathematical
framework of this paper can be found in
\cite{Albev,Berns,Dima,DimaN2,FKKK,Tobi,KKP,KS,Obata}. Here we
present only  some of its aspects and indicate in which of the
mentioned papers further details can be found.
 By $\mathcal{B}(\mathbb{R}^d)$ and
 $\mathcal{B}_{\rm b}(\mathbb{R}^d)$ we denote the set of all
Borel and all bounded Borel subsets of $\mathbb{R}^d$, respectively.

\subsection{The configuration spaces}
\label{SS2.1}

The space $\Gamma$ defined in (\ref{C1}) is endowed
with  the weakest topology that makes continuous all the maps
\[
\Gamma \ni \gamma \mapsto \sum_{x\in \gamma} f(x) , \quad f\in C_0
(\mathbb{R}^d).
\]
Here $C_0 (\mathbb{R}^d)$ stands for the set of all continuous
compactly supported functions $f:\mathbb{R}^d \rightarrow
\mathbb{R}$. The mentioned topology on $\Gamma$ admits a metrization
which turns it into a complete and separable metric (Polish) space.
By $\mathcal{B}(\Gamma)$ we denote the corresponding Borel
$\sigma$-field. For $n\in\mathbb{N}_0:=\mathbb{N}\cup\{0\}$, the set
of $n$-particle configurations in $\mathbb{R}^d$  is
 \[ \Gamma^{(0)} = \{ \emptyset\}, \qquad \Gamma^{(n)} = \{\eta \subset X: |\eta| = n \}, \ \ n\in \mathbb{N}.
 \]
For $n\geq 1$, $\Gamma^{(n)}$ can be identified with the
symmetrization of the set
\[
\left\{(x_1, \dots , x_n)\in \bigl(\mathbb{R}^{d}\bigr)^n: x_i \neq
x_j, \ {\rm for} \ i\neq j\right\},
\]
which allows one to introduce the topology on $\Gamma^{(n)}$ related
to the Euclidean topology of $\mathbb{R}^d$  and hence the
corresponding Borel $\sigma$-field $\mathcal{B}(\Gamma^{(n)})$. The
set of finite configurations
\begin{equation*} \Gamma_{0} :=
\bigsqcup_{n\in \mathbb{N}_0} \Gamma^{(n)}
\end{equation*}
is endowed with the topology of the disjoint union and with the
corresponding Borel $\sigma$-field $\mathcal{B}(\Gamma_{0})$. It is
a measurable subset of $\Gamma$. However, the topology just
mentioned and that induced on $\Gamma_0$ from $\Gamma$ do not
coincide.

For $\Lambda \in \mathcal{B}_{\rm b}(\mathbb{R}^d)$, the set $
\Gamma_\Lambda := \{ \gamma \in \Gamma: \gamma \subset \Lambda\}$ is
a Borel subset of  $\Gamma_0$. We equip $\Gamma_\Lambda$ with the
topology induced by that of $\Gamma_0$. Let
$\mathcal{B}(\Gamma_\Lambda)$ be the corresponding Borel
$\sigma$-field. It can be proved, see  \cite[Lemma~1.1 and
Proposition~1.3]{Obata}, that
\begin{equation*}
\mathcal{B}(\Gamma_\Lambda) = \{ \Gamma_\Lambda \cap \Upsilon :
\Upsilon \in \mathcal{B}(\Gamma)\}.
\end{equation*}
It is known \cite[page 451]{Albev} that $\mathcal{B}(\Gamma)$ is the
smallest $\sigma$-field of subsets of $\Gamma$ such that all the projections
\begin{equation}
 \label{K3}
\Gamma \ni \gamma \mapsto p_\Lambda (\gamma)= \gamma_\Lambda :=
\gamma\cap \Lambda, \qquad \Lambda \in \mathcal{B}_{\rm b}
(\mathbb{R}^d),
\end{equation}
are
$\mathcal{B}(\Gamma)/\mathcal{B}(\Gamma_\Lambda)$ measurable. This
means that $(\Gamma, \mathcal{B}(\Gamma))$ is the projective limit
of the measurable spaces $(\Gamma_\Lambda,
\mathcal{B}(\Gamma_\Lambda))$, $\Lambda \in \mathcal{B}_{\rm b}
(\mathbb{R}^d)$.
\begin{remark}
  \label{Febrk1}
From the latter discussion it follows that $\Gamma_0 \in
\mathcal{B}(\Gamma)$ and
\begin{equation}
  \label{Feb1}
 \mathcal{B}(\Gamma_0) = \{ A \cap \Gamma_0: A \in
 \mathcal{B}(\Gamma )\}.
\end{equation}
Hence, a probability measure $\mu$ on $\mathcal{B}(\Gamma)$ with the
property $\mu(\Gamma_0) = 1$ can be considered also as a measure on
$\mathcal{B}(\Gamma_0)$.
\end{remark}

\subsection{Functions  and measures on configuration spaces}
\label{SS2.2}

A Borel set $\Upsilon\subset \Gamma$ is said to be bounded if the
following holds
\[
\Upsilon \subset \bigcup_{n=0}^N \Gamma^{(n)}_\Lambda,
\]
for some $\Lambda \in \mathcal{B}_{\rm b}(\mathbb{R}^d)$ and $N\in
\mathbb{N}$. In view of (\ref{Feb1}), each bounded set is in
$\mathcal{B}(\Gamma_0)$. A function $G:\Gamma_0 \to \mathbb{R}$ is
measurable if and only if, for each $n\in \mathbb{N}$, there exists
a symmetric Borel function $G^{(n)}:(\mathbb{R}^d)^n \to \mathbb{R}$
such that
\begin{equation}
  \label{Feb2}
G(\eta) = G^{(n)} (x_1 , \dots , x_n), \quad {\rm for} \ \ \eta =
\{x_1 , \dots, x_n\}.
\end{equation}
\begin{definition}
\label{Febdf} A bounded measurable function  $G:\Gamma_0 \to
\mathbb{R}$ is said to have bounded support if:
(a) $G(\eta) = 0$ whenever $\eta \cap \Lambda^c \neq \emptyset$ for
some $\Lambda \in \mathcal{B}_{\rm b}(\mathbb{R}^d)$, $\Lambda^c
:=\mathbb{R}^d \setminus \Lambda$; (b) $G^{(n)} \equiv 0$ whenever
$n> N$ for some $N\in \mathbb{N}$. The set of all such functions is
denoted by $B_{\rm bs}(\Gamma_0)$. For a given $G\in B_{\rm bs}(\Gamma_0)$, by
$N(G)$ we denote the smallest  $N$
with the property as in (b).
\end{definition}
A map $F:\Gamma \rightarrow \mathbb{R}$ is called cylinder function
if there exist $\Lambda \in \mathcal{B}_{\rm b}(\mathbb{R}^d)$ and a
measurable $G: \Gamma_{\Lambda}\rightarrow \mathbb{R}$ such that,
cf. (\ref{K3}), $F(\gamma) = G(\gamma_\Lambda)$ for all $\gamma \in
\Gamma$. Clearly, such a map $F$ is measurable. By $\mathcal{F}_{\rm
cyl}(\Gamma)$ we denote the set of all cylinder functions. For
$\gamma \in \Gamma$, by writing $\eta \Subset \gamma$ we mean that
$\eta \subset \gamma$ and $\eta$ is finite, i.e., $\eta \in
\Gamma_0$. For $G \in B_{\rm bs}(\Gamma_0)$, we set
\begin{equation}
 \label{7A}
 (KG)(\gamma) = \sum_{\eta \Subset \gamma} G(\eta), \qquad \gamma \in \Gamma.
\end{equation}
By \cite{Tobi} we know that $K$  maps $B_{\rm bs}(\Gamma_0)$ onto
$\mathcal{F}_{\rm cyl}(\Gamma)$ and is invertible.
The Lebesgue-Poisson measure $\lambda$ on $\mathcal{B}(\Gamma_0)$ is
defined by the relation
\begin{gather}
  \label{Feb3}
\int_{\Gamma_0} G(\eta) \lambda( d \eta) = G(\emptyset) +
\sum_{n=1}^{\infty} \frac{1}{n!} \int_{(\mathbb{R}^d)^n} G^{(n)}
(x_1 , \dots, x_n) d x_1 \cdots d x_n,
\end{gather}
which has to hold for all $G\in B_{\rm bs}(\Gamma_0)$, cf.
(\ref{Feb2}). Note that $B_{\rm bs}(\Gamma_0)$ is a measure defining
class. Clearly, $\lambda(\Upsilon) < \infty$ for each bounded
$\Upsilon \in \mathcal{B}(\Gamma_0)$. With the help of (\ref{Feb3}),
we rewrite (\ref{I3}) in the following form
\begin{equation}
  \label{II3}
B_\mu (\theta) = \int_{\Gamma_0} k_\mu (\eta) \left(\prod_{x\in
\eta}\theta(x) \right)\lambda ( d \eta).
\end{equation}
In the sequel, by saying that something holds for all $\eta$ we mean
that it holds for $\lambda$-almost all $\eta\in \Gamma_0$. This
relates also to (\ref{Feb2}).

Let $\mathcal{P}(\Gamma)$, resp. $\mathcal{P}(\Gamma_0)$, stand for
the set of all probability measures on $\mathcal{B}(\Gamma)$, resp.
$\mathcal{B}(\Gamma_0)$. Note that $\mathcal{P}(\Gamma_0)$ can be
considered as a subset of $\mathcal{P}(\Gamma)$, see Remark
\ref{Febrk1}. For a given $\mu\in \mathcal{P}(\Gamma)$, the {\it
projection} $\mu^\Lambda$ is defined as
\begin{equation}
  \label{ProJ}
\mu^\Lambda (A) = \mu(p_\Lambda^{-1} (A)), \qquad A \in
\mathcal{B}(\Gamma_\Lambda),
\end{equation}
where $p_\Lambda^{-1} (A):= \{ \gamma \in \Gamma: p_\Lambda
(\gamma)\in A\}$, see (\ref{K3}). The projections of the
Lebesgue-Poisson measure $\lambda$ are defined in the same way.

Recall that  $\mathcal{P}_{\rm anal}$ (resp. $\mathcal{P}_{\rm
sP}$) denotes the set of all those $\mu\in \mathcal{P}(\Gamma)$
for each of which $B_\mu$ defined in (\ref{I1}), or (\ref{II3}),
admits continuation to a function on $L^1(\mathbb{R}^d)$ analytic in
some neighborhood of zero (resp. exponential type entire function).
The elements of $\mathcal{P}_{\rm sP}$ are called sub-Poissonian
states. One can show  \cite[Proposition 4.14]{Tobi} that for each
$\Lambda \in \mathcal{B}_{\rm b}(\mathbb{R}^d)$ and $\mu \in
\mathcal{P}_{\rm sP}$, $\mu^\Lambda$ is absolutely continuous
with respect to $\lambda^\Lambda$. The Radon-Nikodym derivative
\begin{equation}
  \label{RN}
  R_\mu^\Lambda (\eta) = \frac{d\mu^\Lambda }{d
  \lambda^\Lambda}(\eta), \qquad \eta \in \Gamma_\Lambda,
\end{equation}
and the correlation function $k_\mu$ satisfy
\begin{equation}
  \label{9AA}
  k_\mu (\eta) = \int_{\Gamma_\Lambda} R^\Lambda_\mu (\eta \cup \xi)
  \lambda^\Lambda(d\xi), \qquad \eta \in \Gamma_\Lambda,
\end{equation}
which holds for all $\Lambda \in \mathcal{B}_{\rm b}(\mathbb{R}^d)$.
Note that (\ref{9AA}) relates $R^\Lambda_\mu$ with the restriction
of $k_\mu$ to $\Gamma_\Lambda$. The fact that these are the
restrictions of one and the same function $k_\mu:\Gamma_0 \to
\mathbb{R}$ corresponds to the Kolmogorov consistency of the family
$\{\mu^\Lambda\}_{\Lambda \in \mathcal{B}_{\rm b}(\mathbb{R}^d)}$.

By (\ref{7A}), (\ref{ProJ}), and (\ref{9AA}) we get
\begin{equation*}
  %\label{Nahar}
\int_{\Gamma} (KG) (\gamma) \mu(d \gamma) =  \langle \! \langle
G,k_\mu \rangle \! \rangle,
\end{equation*}
which holds for each $G\in B_{\rm bs}(\Gamma_0)$ and $\mu\in
\mathcal{P}_{\rm sP}$. Here and in the sequel we use the notation
\begin{equation}
  \label{dual1}
 \langle \! \langle G,k \rangle \! \rangle = \int_{\Gamma_0} G(\eta)
k(\eta)\lambda(d\eta),
\end{equation}
Define
\begin{equation}
 \label{9AY}
 B_{\rm bs}^\star (\Gamma_0) = \{ G \in B_{\rm bs}(\Gamma_0): KG \not\equiv 0,
 \   (KG)(\gamma) \geq 0 \   {\rm for}  \ {\rm all}  \ \gamma \in \Gamma\}.
\end{equation}
By  \cite[Theorems 6.1 and 6.2 and Remark 6.3]{Tobi} we know that
the following holds.
\begin{proposition}
 \label{rhopn}
Let a measurable function $k:\Gamma_0\to \mathbb{R}$ have the
following properties:
\begin{eqnarray}
 \label{9AZ}
 & &  (i)  \ \ \langle \! \langle G,k \rangle \! \rangle \geq 0 \qquad {\rm for} \ \ {\rm all} \ \ G\in
B_{\rm bs}^\star(\Gamma_0),\\[.2cm]
& & (ii) \  \ k (\emptyset)  = 1, \qquad (iii) \ \ k (\eta) \leq
C^{|\eta|},\qquad \eta\in\Gamma_0, \nonumber
\end{eqnarray}
property (iii) holding for some $C>0$. Then there exists a unique
$\mu\in \mathcal{P}_{\rm sP}$ for which $k$ is the correlation
function.
\end{proposition}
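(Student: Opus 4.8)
The plan is to reconstruct the measure $\mu$ from the datum $k$ through its finite-volume projections, by exactly inverting the relations (\ref{9AA}). In this scheme the three hypotheses play complementary roles: the Ruelle-type bound (iii) guarantees that all the series involved converge absolutely, condition (i) yields the non-negativity of the reconstructed local densities, and the normalization (ii) turns them into probability densities. Uniqueness will then be automatic from the projective-limit description of $(\Gamma,\mathcal{B}(\Gamma))$ recalled after (\ref{K3}).

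First, for each $\Lambda\in\mathcal{B}_{\rm b}(\mathbb{R}^d)$ I would introduce a candidate Radon--Nikodym density on $\Gamma_\Lambda$ by the M\"obius-type inversion of (\ref{9AA}),
\begin{equation*}
R^\Lambda(\eta):=\int_{\Gamma_\Lambda}(-1)^{|\xi|}\,k(\eta\cup\xi)\,\lambda^\Lambda(d\xi),\qquad \eta\in\Gamma_\Lambda .
\end{equation*}
By (iii) and (\ref{Feb3}) one has $|R^\Lambda(\eta)|\le C^{|\eta|}\exp(C\,|\Lambda|)$, so the integral converges absolutely and $R^\Lambda\in L^1(\Gamma_\Lambda,\lambda^\Lambda)$; the same bound will later supply the sub-Poissonian estimate for $\mu$. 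A routine interchange of integrations (justified by that bound) together with the elementary identity $\sum_{\zeta\subset\omega}(-1)^{|\omega|-|\zeta|}=0$ for $\omega\neq\emptyset$ shows that $R^\Lambda$ indeed inverts (\ref{9AA}), i.e. $\int_{\Gamma_\Lambda}R^\Lambda(\eta\cup\xi)\,\lambda^\Lambda(d\xi)=k(\eta)$ for $\eta\in\Gamma_\Lambda$; putting here $\eta=\emptyset$ and using (ii) gives $\int_{\Gamma_\Lambda}R^\Lambda\,d\lambda^\Lambda=k(\emptyset)=1$.

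The non-negativity of $R^\Lambda$ is the crucial point. Fixing $\Lambda$ and taking $G\in B_{\rm bs}(\Gamma_0)$ supported in $\Gamma_\Lambda$, a combination of the definition (\ref{7A}) of $K$, the convolution identity $\int_{\Gamma_\Lambda}\int_{\Gamma_\Lambda}F(\eta,\xi)\lambda^\Lambda(d\eta)\lambda^\Lambda(d\xi)=\int_{\Gamma_\Lambda}\sum_{\zeta\subset\omega}F(\zeta,\omega\setminus\zeta)\lambda^\Lambda(d\omega)$ for the Lebesgue--Poisson measure, and the inversion just established, yields $\langle\!\langle G,k\rangle\!\rangle=\int_{\Gamma_\Lambda}(KG)(\eta)\,R^\Lambda(\eta)\,\lambda^\Lambda(d\eta)$. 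Hence, by (i), $\int_{\Gamma_\Lambda}(KG)\,R^\Lambda\,d\lambda^\Lambda\ge 0$ for every $G\in B_{\rm bs}^\star(\Gamma_0)$ supported in $\Gamma_\Lambda$ (the case $KG\equiv0$ being trivial since $K$ is invertible). As the cone of functions $KG$ arising in this way is rich enough to test positivity against $L^1(\Gamma_\Lambda,\lambda^\Lambda)$ — this is exactly the content of \cite[Theorems 6.1 and 6.2]{Tobi} — one concludes $R^\Lambda\ge 0$ $\lambda^\Lambda$-a.e., so that $d\mu^\Lambda:=R^\Lambda\,d\lambda^\Lambda$ is a probability measure on $\Gamma_\Lambda$. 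A further application of the inversion formula, integrating out the variables in $\Lambda'\setminus\Lambda$, shows that the family $\{\mu^\Lambda\}_{\Lambda\in\mathcal{B}_{\rm b}(\mathbb{R}^d)}$ is Kolmogorov-consistent; by the projective-limit property there is thus a unique $\mu\in\mathcal{P}(\Gamma)$ with these projections. By construction $R^\Lambda_\mu=R^\Lambda$, whence (\ref{9AA}) and the inversion identity give $k_\mu=k$ on each $\Gamma_\Lambda$, so $k_\mu=k$; since $k$ obeys a bound of the form (\ref{I4}), $B_\mu$ is of exponential type and $\mu\in\mathcal{P}_{\rm sP}$. Uniqueness follows at once: if $\nu\in\mathcal{P}_{\rm sP}$ has the same correlation function $k$, its projection densities satisfy (\ref{9AA}) with this $k$, so inverting that relation forces $R^\Lambda_\nu=R^\Lambda$ and hence $\nu^\Lambda=\mu^\Lambda$ for every $\Lambda$, i.e. $\nu=\mu$.

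The step I expect to be the main obstacle is precisely the passage from the abstract positivity condition (i), formulated for the cone $K(B_{\rm bs}^\star(\Gamma_0))$ of pointwise non-negative cylinder functions, to the $\lambda^\Lambda$-a.e.\ positivity of the finite-volume density $R^\Lambda$: one must know that the images $KG$, with $G\in B_{\rm bs}^\star(\Gamma_0)$ supported in $\Gamma_\Lambda$, approximate — in the duality with $L^1(\Gamma_\Lambda,\lambda^\Lambda)$ — all non-negative bounded functions on $\Gamma_\Lambda$, which is the technical heart behind the quoted results of \cite{Tobi}. By contrast, the remaining ingredients — absolute convergence of the inversion series, the interchange-of-integration identities, Kolmogorov consistency, and uniqueness — are routine once the bound (iii) is available.
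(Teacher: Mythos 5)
Note first that the paper does not prove Proposition \ref{rhopn} at all: it is imported verbatim from \cite[Theorems 6.1, 6.2 and Remark 6.3]{Tobi}, so the comparison here is with the classical Lenard--Kondratiev--Kuna argument that those theorems encode. Your sketch follows exactly that route (M\"obius inversion to local densities, positivity, normalization, Kolmogorov consistency, projective limit, uniqueness by re-inversion), and the routine parts are essentially fine. Two small repairs: condition (iii) is one-sided, so before you can claim absolute convergence of the inversion integral and the bound $|R^\Lambda(\eta)|\le C^{|\eta|}e^{C|\Lambda|}$ you should first note that (i), applied to pointwise nonnegative $G\in B_{\rm bs}(\Gamma_0)$ (for which $KG\ge 0$ automatically), already gives $k\ge 0$ $\lambda$-a.e.; and the uniqueness paragraph is correct as written.

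The genuine gap is the step you yourself single out: deducing $R^\Lambda\ge 0$ $\lambda^\Lambda$-a.e.\ from (i). As written, your argument is circular, because you justify precisely this step by citing \cite[Theorems 6.1 and 6.2]{Tobi} --- which is the proposition being proved. Moreover, the reduction you propose (``the cone $K(B^{\star}_{\rm bs})$ is rich enough to test positivity against $L^1(\Gamma_\Lambda,\lambda^\Lambda)$'') is not a correct statement of what has to be shown, and no soft richness/density argument can close it. The functions $KG$ with $G$ of bounded support are polynomial-type observables, and nonnegativity of a \emph{signed} density against all \emph{nonnegative} polynomials in a cone is a moment-problem phenomenon: in general it does not force the density to be nonnegative (compare the classical examples of nontrivial signed measures integrating every polynomial to zero), so the claim fails for arbitrary $L^1$ densities and can only hold because the Ruelle-type bound (iii) controls the moments of $R^\Lambda$ strongly enough. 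In other words, (iii) must enter the positivity argument itself --- this is the actual content of Lenard's theorem as used in \cite{Tobi} --- and not merely the convergence of the inversion series, as your sketch suggests. Until that step is proved (or honestly delegated, as the paper does, to \cite{Tobi}), the construction of $\mu^\Lambda$ as a probability measure, and hence the whole existence part, is not established.
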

Finally, we mention the convention
\[
\sum_{a\in \emptyset} \phi_a := 0 , \qquad \prod_{a\in \emptyset}
\psi_a := 1
\]
which we use in the sequel and the integration rule, see, e.g.,
\cite{Dima},
\begin{equation}
 \label{12A}
\int_{\Gamma_0} \sum_{\xi \subset \eta} H(\xi, \eta \setminus \xi,
\eta) \lambda (d \eta) = \int_{\Gamma_0}\int_{\Gamma_0}H(\xi, \eta,
\eta\cup \xi) \lambda (d \xi) \lambda (d \eta),
\end{equation}
valid for  appropriate functions $H$.

\subsection{Spaces of functions}
\label{SS2.3} For each $\mu\in \mathcal{P}_{\rm sP}$, the
correlation function satisfies the bound (\ref{I4}) in view of which
we introduce the following Banach spaces. For $\alpha
\in\mathbb{R}$, we set
\begin{equation}
 \label{z30}
\|k\|_\alpha = \esssup_{\eta \in \Gamma_0}
|k(\eta)|\exp(-\alpha|\eta|).
\end{equation}
It is a norm, that can also be written as follows. As in
(\ref{Feb2}), each $k:\Gamma_0\to \mathbb{R}$ is defined by its
restrictions to $\Gamma^{(n)}$. Let $k^{(n)}:(\mathbb{R}^d)^n \to
\mathbb{R}$ be a symmetric Borel function such that $k^{(n)} (x_1 ,
\dots , x_n) = k(\eta)$ for $\eta = \{x_1, \dots , x_n\}$. We then
assume that $k^{(n)}\in L^\infty ((\mathbb{R}^d)^n)$, $n\in
\mathbb{N}$, cf. (\ref{I4}), and define
\begin{equation}
  \label{M}
\|k\|_\alpha = \sup_{n\in \mathbb{N}_0} e^{-\alpha n}  \nu_n(k),
\quad \nu_n(k):= \| k^{(n)}\|_{L^\infty((\mathbb{R}^d)^n)},
\end{equation}
that yields the same norm as in (\ref{z30}). Obviously,
\begin{equation}
 \label{z300}
\mathcal{K}_\alpha := \{k:\Gamma_0\to \mathbb{R}: \|k\|_\alpha <
\infty\},
\end{equation}
is a Banach space. For $\alpha' < \alpha''$, we have
$\|k\|_{\alpha''} \leq \|k\|_{\alpha'}$. Hence,
\begin{equation}
 \label{z31}
\mathcal{K}_{\alpha'} \hookrightarrow \mathcal{K}_{\alpha''}, \qquad
\ {\rm for} \ \alpha' < \alpha''.
\end{equation}
Here and in the sequel, by $X \hookrightarrow Y$ we mean a
continuous embedding of these two Banach spaces. For $\alpha\in
\mathbb{R}$, we define, cf. (\ref{9AY}) and (\ref{dual1}),
\begin{equation}
  \label{Na}
\mathcal{K}_\alpha^{\star} = \{ k \in \mathcal{K}_\alpha:
 \forall G\in B^{\star}_{\rm bs} (\Gamma_0)\
\langle \! \langle G,k \rangle \! \rangle \geq 0 \}.
\end{equation}
It is a subset of the cone
\begin{equation}
  \label{Naha}
\mathcal{K}_\alpha^{+} = \{ k \in \mathcal{K}_\alpha: k(\eta) \geq 0
\ \ {\rm for} \ \ {\rm a.a.} \ \ \eta \in \Gamma_0\}.
\end{equation}
By Proposition \ref{rhopn} we have that each $k\in
\mathcal{K}_\alpha^{\star}$ with the property $k(\emptyset) =1$ is
the correlation function of a unique $\mu \in\mathcal{P}_{\rm sP}$.
We also put
\begin{equation}
  \label{sta1}
\mathcal{K}_{\infty} = \bigcup_{\alpha \in \mathbb{R}}
\mathcal{K}_{\alpha},
\end{equation}
and equip this set with the inductive topology.
% as in the case of
%(\ref{sta}).
Finally, we define
\begin{equation*}
 % \label{sta2}
 \mathcal{K}^\star_{\infty} =
\bigcup_{\alpha \in \mathbb{R}} \mathcal{K}^\star_{\alpha}.
\end{equation*}

\section{The Model and the Results}
\label{S3}

\subsection{The model}

\label{SS3.1}

As was already mentioned, the model is specified by the expression
given in (\ref{R20}). Regarding the kernels  in (\ref{Ra20}) we
suppose that
\begin{equation}
 \label{AA}
a^{\pm} \in L^1(\mathbb{R}^d)\cap L^\infty(\mathbb{R}^d), \qquad
a^{\pm}(x) = a^{\pm}(-x) \geq 0,
\end{equation}
and thus define
\begin{equation}
 \label{14A}
\langle{a}^{\pm}\rangle = \int_{\mathbb{R}^d} a^{\pm} (x)dx,\qquad
\|a^{\pm} \|= \esssup_{x\in \mathbb{R}^d} a^{\pm}(x),
\end{equation}
and
\begin{equation}
 \label{15A}
E^{\pm} (\eta) = \sum_{x\in \eta}E^{\pm} (x,\eta\setminus x) =
\sum_{x\in \eta} \sum_{y\in \eta\setminus x}a^{\pm} (x-y), \quad
\eta \in \Gamma_0 .
\end{equation}
We also denote
\begin{equation}
 \label{16A}
E(\eta) = \sum_{x\in \eta}\left(m + E^{-} (x, \eta\setminus x)
\right) = m|\eta| + E^{-}(\eta),
\end{equation}
where $m$ is the same as in (\ref{R20}).

In addition to the standard assumptions (\ref{AA}) we shall use the
following
\begin{assumption}[$(b, \vartheta)$-{\em assumption}]
  \label{Ass1}
There exist $\vartheta>0$ and $b\geq 0$ such that the functions
introduced in (\ref{15A})  satisfy
\begin{equation}
  \label{theta}
b |\eta| + E^{-}(\eta) \geq \vartheta E^{+} (\eta), \qquad \eta \in
\Gamma_0.
\end{equation}
\end{assumption}
Note that the case of point-wise domination
\begin{equation}
 \label{dom}
a^{-} (x) \geq \vartheta a^{+} (x) , \qquad \quad x \in
\mathbb{R}^d,
\end{equation}
cf. \cite[Eq. (3.11)]{FKKK}, corresponds to (\ref{theta})  with
$b=0$. In subsection \ref{SS3.3} below we give  examples of the
kernels $a^{\pm}$ which satisfy (\ref{theta}). To exclude the
trivial case of $a^{+} = a^{-} =0$ we also assume that
\begin{equation*}
  %\label{theta1}
  \langle a^{-} \rangle >0.
\end{equation*}

\subsection{The results}
\label{SS.Res}

\subsubsection{The operators}
In view of the relationship between states and correlation functions
discussed in subsection \ref{SS2.3}, we describe the system's
dynamics in the following way. First we obtain the evolution
$k_{\mu_0} \mapsto k_t$  by proving the existence of a unique
solution of the Cauchy problem of the following type
\begin{equation}\label{20A}
\frac{d k_t}{d t} = L^\Delta k_t , \qquad k_t|_{t=0} = k_{\mu_0},
\end{equation}
where the action of $L^\Delta$ is calculated from (\ref{R20}).
Thereafter, we show that each $k_t$ has the property
$k_t(\emptyset)=1$ and  lies in  $\mathcal{K}_{\alpha}^{\star}$ for
some $\alpha\in \mathbb{R}$. Hence, it is the correlation function
of a unique $\mu_t\in \mathcal{P}_{\rm sP}$. This yields in turn the
evolution $\mu_0 \mapsto \mu_t$.

To describe the action of $L^\Delta$ in a systematic way we write it
in the following form, see \cite{Dima,FKKK},
\begin{equation}
{L}^\Delta = A^\Delta+B^\Delta \label{21A},
\end{equation}
where
\begin{gather}
 \label{22A}
A^\Delta = A^\Delta_1 + A^\Delta_2,\\[.2cm]
(A_1^\Delta k)(\eta) = - E(\eta) k(\eta), \qquad (A_2^\Delta
k)(\eta) = \sum_{x\in \eta} E^{+} (x,\eta\setminus x) k
(\eta\setminus x), \nonumber
\end{gather}
see also (\ref{15A}), (\ref{16A}), and
\begin{eqnarray}
B^\Delta &= & B^\Delta_1 + B^\Delta_2 , \label{23A}\\[.2cm]
(B^\Delta_1 k)(\eta) &=& - \int_{\mathbb{R}^d} E^{-} (y,\eta) k(\eta\cup y) dy ,\nonumber \\[.2cm] (B^\Delta_2 k)(\eta) &=&
 \int_{\mathbb{R}^d} \sum_{x\in \eta} a^{+} (x - y) k(\eta \setminus x \cup y) d y. \nonumber
\end{eqnarray}
The key idea of the method that we use to study (\ref{20A}) is to
employ the scale of spaces (\ref{z300}) in which $A^\Delta$ and
$B^\Delta$  act as bounded operators from $\mathcal{K}_{\alpha'}$,
to any  $\mathcal{K}_{\alpha}$ with $\alpha> \alpha'$, cf.
(\ref{z31}). For such $\alpha$ and $\alpha' $, by (\ref{z30}) and
(\ref{M}) we have, see (\ref{22A}),
\begin{eqnarray*}
%\label{AC}
 \|A^\Delta_1 k\|_\alpha &\leq & \|k\|_{\alpha'}
\esssup_{\eta \in \Gamma_0} E(\eta)\exp\left(-(\alpha- \alpha')|\eta| \right), \\[.2cm]
 \|A^\Delta_2 k\|_\alpha &\leq & \esssup_{\eta \in \Gamma_0}
e^{-\alpha |\eta|}\sum_{x\in \eta}E^{+}(x,\eta\setminus x) |k(\eta\setminus x)|\nonumber\\[.2cm]
&\leq & \|k\|_{\alpha'}e^{-\alpha'}\esssup_{\eta \in \Gamma_0}
E^{+}(\eta)\exp\left(-(\alpha- \alpha')|\eta| \right), \nonumber
\end{eqnarray*}
which by (\ref{M}) and (\ref{14A}) yields
\begin{eqnarray}
\label{ACmar}
 \|A^\Delta_1 k\|_\alpha &\leq & \|k\|_{\alpha'} \left(\frac{m}{e(\alpha - \alpha')} + \frac{4 \|a^{-} \|}{e^2 (\alpha - \alpha')^2}
 \right) \\[.2cm]
 \|A^\Delta_2 k\|_\alpha &\leq & \|k\|_{\alpha'}e^{-\alpha'} \frac{4 \|a^{+} \|}{e^2 (\alpha -
 \alpha')^2}, \nonumber
\end{eqnarray}
where we have used the estimate
\begin{equation}
  \label{Iwona}
n^p e^{-\sigma n} \leq \left( \frac{p}{e\sigma}\right)^p, \quad
p\geq 1, \ \sigma >0, \ n \in \mathbb{N}.
\end{equation}
In a similar way, we obtain from (\ref{23A}) the following estimate,
see (\ref{14A}),
\begin{equation}
  \label{Iw}
 \|B^\Delta k\|_\alpha \leq \|k\|_{\alpha'} \frac{ \langle a^{+} \rangle +  \langle a^{-} \rangle e^{\alpha'}
 }{ e(\alpha - \alpha')}.
\end{equation}
Thus, by means of (\ref{21A}) -- (\ref{23A}), and then by
(\ref{ACmar}) and (\ref{Iw}), for each $\alpha$, $\alpha' \in
\mathbb{R}$, $\alpha'<\alpha$, one can define a continuous operator
\begin{equation}
  \label{sta3}
  L^\Delta_{\alpha \alpha'} : \mathcal{K}_{\alpha'} \to
  \mathcal{K}_{\alpha}.
\end{equation}
Let $\mathcal{L}(\mathcal{K}_{\alpha'}, \mathcal{K}_{\alpha})$ stand
for the set of all bounded linear operators $\mathcal{K}_{\alpha'}
\to \mathcal{K}_{\alpha}$. The operator norm of $L^\Delta_{\alpha
\alpha'}$ can be estimated by means of the above formulas. Thus, the
family $\{L^\Delta_{\alpha \alpha'}\}_{\alpha,\alpha'}$ determines a
bounded linear operator $L^\Delta : \mathcal{K}_\infty \to
\mathcal{K}_\infty$. Along with these continuous operators, in each
$\mathcal{K}_\alpha$, $\alpha \in \mathbb{R}$, we define an
unbounded operator, $L^\Delta_\alpha$, with domain
\begin{equation}
  \label{Iw1}
\mathcal{D}^\Delta_\alpha = \{ k \in \mathcal{K}_\alpha : L^\Delta k
\in \mathcal{K}_\alpha\}\supset  \mathcal{K}_{\alpha'},
\end{equation}
which holds for each $\alpha' < \alpha$, see (\ref{ACmar}),
(\ref{Iw}), and (\ref{21A}). The operators such introduced  are
related to each other in the following way:
\begin{equation}
 \label{staS}
 \forall \alpha'< \alpha \quad
\forall k \in \mathcal{K}_{\alpha'} \qquad  L^\Delta_{\alpha
\alpha'} k = L^\Delta_{\alpha }k.
\end{equation}

\subsubsection{The statements}

Now we can make precise which equations we are going to solve. One
possibility is to consider (\ref{20A}) in a given Banach space,
$\mathcal{K}_\alpha$.
\begin{definition}
  \label{I1df}
Given $\alpha \in \mathbb{R}$ and $T\in (0, +\infty]$, by a solution
of the Cauchy problem
\begin{equation}
 \label{Cauchy}
 \frac{d}{dt} k_t = L^\Delta_\alpha k_t , \qquad k_t|_{t=0} = k_0 \in
 \mathcal{D}^\Delta_\alpha,
\end{equation}
in $\mathcal{K}_{\alpha}$ we mean a continuous map $[0,T) \ni t
\mapsto k_t \in \mathcal{D}_\alpha$, continuously differentiable in
$\mathcal{K}_{\alpha}$ on $[0,T)$ and such that (\ref{Cauchy}) is
satisfied for all $t\in [0,T)$.
\end{definition}
Another possibility is to define (\ref{20A}) in the locally convex
space (\ref{sta1}).
\begin{definition}
  \label{I2df}
For a given $T_*\in (0, +\infty]$, by a solution of the Cauchy
problem (\ref{20A}) in $\mathcal{K}_\infty$ with a given $k_0 \in
\mathcal{K}_\infty$ we mean a map $[0,T_*) \ni t \mapsto k_t \in
\mathcal{K}_\infty$, continuously differentiable on $[0,T_*)$ and
such that (\ref{20A}) is satisfied for all $t\in [0,T_*)$.
\end{definition}
Note that $T_*$ in Definition \ref{I2df} is such that for each $T<T_*$, \
there exist  $\alpha_0, \alpha
\in \mathbb{R}$, $\alpha_0 < \alpha$, for which the mentioned $k_t$
is a solution as in Definition \ref{I1df} with $k_0 \in
\mathcal{K}_{\alpha_0}$.
Our main results are contained in the following two statements.
\begin{theorem}
  \label{1tm}
Let $(b,\vartheta)$-assumption (\ref{theta}) hold true, and $\mu_0$
be an arbitrarily sub-Poissonian state. Then the problem (\ref{20A})
with $k_0 = k_{\mu_0}$ has a unique solution $k_t \in
\mathcal{K}^{\star}_{\infty}$ on the time interval $[0,+\infty)$,
which has the property $k_t(\emptyset) =1$. Therefore, for each
$t\geq 0$ there exists a unique sub-Poissonian measure $\mu_t$ such
that $k_t = k_{\mu_t}$.
\end{theorem}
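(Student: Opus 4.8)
The plan is to construct the evolution $k_{\mu_0}\mapsto k_t$ by a bootstrap along the scale $\{\mathcal{K}_\alpha\}$, then verify positivity and normalization, and finally invoke Proposition \ref{rhopn}. The central analytic difficulty is that $L^\Delta$ is not an operator in any single $\mathcal{K}_\alpha$ but only a bounded map $\mathcal{K}_{\alpha'}\to\mathcal{K}_\alpha$ with a norm (see \eqref{ACmar}, \eqref{Iw}) that blows up like $(\alpha-\alpha')^{-2}$ as $\alpha'\uparrow\alpha$. The $(b,\vartheta)$-assumption \eqref{theta} is what rescues us: it says the ``good'' (dissipative) part $A_1^\Delta$, which multiplies by $-E(\eta)=-(m|\eta|+E^-(\eta))$, dominates the ``bad'' birth-type terms $A_2^\Delta$ and $B^\Delta$ in a way that should give an a priori bound independent of how many times we iterate.

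\medskip

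First I would establish \emph{local-in-time} existence and uniqueness. Fix $\alpha_0$ with $k_{\mu_0}\in\mathcal{K}_{\alpha_0}$ (possible since $\mu_0\in\mathcal{P}_{\rm sP}$, by \eqref{I4}). For a target $\alpha_1>\alpha_0$, split the interval $[\alpha_0,\alpha_1]$ into $N$ pieces and, on each subinterval, use the Picard/Ovsyannikov-type scheme: the series $k_t=\sum_{n\ge0}\frac{t^n}{n!}(L^\Delta)^n k_{\mu_0}$ converges in $\mathcal{K}_{\alpha_1}$ for $t$ small, because the $(\alpha-\alpha')^{-2}$ singularities, when one optimizes the splitting, produce a finite radius of convergence $T(\alpha_0,\alpha_1)>0$. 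This is the standard Ovsyannikov lemma argument and it gives a solution in the sense of Definition \ref{I2df} on $[0,T_*)$ for some $T_*>0$, together with uniqueness in the scale (two solutions agree by a Gronwall-type estimate pushing slightly up the scale).

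\medskip

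The hard part is \textbf{global existence}, i.e.\ showing $T_*=+\infty$. The mechanism must be: the sub-Poisson bound is \emph{not} destroyed by the evolution, so one never runs out of room in the scale. The cleanest route is to pass to the predual (``sun-dual'') side: the operator $L^\Delta$ is dual, via $\langle\!\langle\cdot,\cdot\rangle\!\rangle$ in \eqref{dual1}, to the Kolmogorov-type operator $\widehat{L}$ acting on functions $G$ on $\Gamma_0$, and on that side the competition term $-E^-$ enters with the favourable sign. Concretely, one constructs an auxiliary evolution of sub-Poissonian measures on bounded volumes $\Lambda$ (where the dynamics is an honest jump process, cf.\ \cite{K}), with generator $L^*$ localized to $\Gamma_\Lambda$, shows its correlation functions $k_t^\Lambda$ obey a bound $k_t^\Lambda(\eta)\le C(t)^{|\eta|}$ \emph{uniform in $\Lambda$} — here is exactly where \eqref{theta} is used, to control $E^+$ by $b|\eta|+E^-$ in a Gronwall inequality for $\nu_n(k_t^\Lambda)$ — and then takes $\Lambda\uparrow\mathbb{R}^d$. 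The limit solves \eqref{20A}, lies in $\mathcal{K}^\star_\alpha$ for a (time-dependent) $\alpha$, and by uniqueness coincides with the local solution, which therefore extends to all $t\ge0$. Equivalently one can run a direct a priori estimate on $\nu_n(k_t)$ using \eqref{theta}, but the localization argument is the most transparent way to simultaneously get the positivity $k_t\in\mathcal{K}^+_\alpha$ and the cone property $k_t\in\mathcal{K}^\star_\alpha$, since these hold trivially for the finite-volume correlation functions and are preserved under the (pointwise / weak-$*$) limit.

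\medskip

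Finally, $k_t(\emptyset)=1$ for all $t$ follows because $(L^\Delta k)(\emptyset)=0$: inspecting \eqref{22A}--\eqref{23A}, $A_1^\Delta$ gives $-E(\emptyset)=0$, $A_2^\Delta$ has an empty sum, $B_1^\Delta$ integrates $E^-(y,\emptyset)=0$, and $B_2^\Delta$ has an empty sum; hence $\frac{d}{dt}k_t(\emptyset)=0$ and $k_0(\emptyset)=k_{\mu_0}(\emptyset)=1$. With $k_t\in\mathcal{K}^\star_\alpha$, $k_t(\emptyset)=1$, and the bound $k_t(\eta)\le C^{|\eta|}$ built into membership in $\mathcal{K}_\alpha$, Proposition \ref{rhopn} produces for each $t$ a unique $\mu_t\in\mathcal{P}_{\rm sP}$ with $k_{\mu_t}=k_t$, completing the proof.
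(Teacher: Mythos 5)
Your overall architecture (local solution in the scale, positivity via finite-volume approximation, then Proposition \ref{rhopn}) is close in spirit to the paper, but the very first step as you state it does not work. You propose to get local existence from the series $k_t=\sum_{n\ge0}\frac{t^n}{n!}(L^\Delta)^n k_{\mu_0}$ with an Ovsyannikov-type splitting of $[\alpha_0,\alpha_1]$, while acknowledging that $\|L^\Delta_{\alpha\alpha'}\|$ blows up like $(\alpha-\alpha')^{-2}$ (see (\ref{ACmar})): the terms $E^-(\eta)$ and $\sum_x E^{+}(x,\eta\setminus x)$ are quadratic in $|\eta|$. With a second-order singularity the Ovsyannikov mechanism fails: after optimizing the splitting, the $n$-th term of the iterated series is of order $\frac{t^n}{n!}\bigl(Cn^2/(\alpha_1-\alpha_0)^2\bigr)^n\sim n^n\bigl(Cte/(\alpha_1-\alpha_0)^2\bigr)^n$, which diverges for every $t>0$; the same obstruction ruins your uniqueness sketch if the full $L^\Delta$ is iterated. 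This is precisely why the paper splits $L^\Delta=A^\Delta_b+B^\Delta_b$ (see (\ref{key3})--(\ref{key5})): the quadratically singular part $A^\Delta_b$ is never iterated but realized as the sun-dual of a substochastic semigroup in the predual space $\mathcal{G}_\alpha$, constructed via the Thieme--Voigt theorem (Lemma \ref{1lm}) -- and this is where the $(b,\vartheta)$-assumption first enters, making $A_2$ relatively bounded by $A_{1,b}$ for $\alpha>-\log\vartheta$ -- while only $B^\Delta_b$, whose norm has a first-order singularity (\ref{bed4}), is iterated in the operators $Q_{\alpha_2\alpha_1}(t;B^\Delta_b)$ of Lemma \ref{IN1lm}. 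Without some such device your local solution, and hence everything built on it, is not established.

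On the global step your finite-volume idea is in the right direction (the paper's Identification Lemma \ref{Wolflm} is indeed proved by approximating with densities $R^{\Lambda,N}_t$, plus an extra cutoff $\varphi_\sigma$ in the birth rate and intermediate spaces $\mathcal{U}_{\sigma,\alpha}$, $\mathcal{G}_\omega$ needed to identify the approximating evolutions), but your ``Gronwall inequality for $\nu_n(k^\Lambda_t)$'' does not close as stated: the hierarchy couples $k^{(n)}$ to $k^{(n+1)}$ through the competition term $B^\Delta_1$, and discarding that term is legitimate only once positivity of $k_t$ is known. The paper's actual continuation mechanism is the comparison in Lemma \ref{Biel-1lm}: once $k_t\in\mathcal{K}^\star$ is known, $0\le Q_{\alpha_2\alpha_1}(t;B^\Delta_b)k_0\le Q_{\alpha_2\alpha_1}(t;B^\Delta_{2,b})k_0$, and since $\beta(\cdot\,;B^\Delta_{2,b})=\langle a^{+}\rangle+b$ is independent of $\alpha$, one gets the bound (\ref{Sun}), which lets the solution be prolonged to all $t\ge0$ by moving up the scale at the fixed rate $\langle a^{+}\rangle+b$. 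Your normalization argument $k_t(\emptyset)=1$ and the final appeal to Proposition \ref{rhopn} are correct and coincide with the paper's.
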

The next statement describes the solutions in more detail.
\begin{theorem}
  \label{2tm}
Let $(b,\vartheta)$-assumption (\ref{theta}) hold true with $b>0$
(resp. $b=0$), and let $\alpha_0$ be such that $k_{\mu_0}\in
\mathcal{K}_{\alpha_0}$. Then the solution $k_t$  as in
Theorem \ref{1tm}, corresponding to this $k_{\mu_0}$, for all $t\geq
0$, satisfies the following estimates.
\begin{itemize}
  \item[{\it (i)}] Case $\langle a^{+} \rangle>0$ and $m\in [0,\langle a^{+}
\rangle]$: for each $\delta <m$ (resp. $\delta \leq m$) there exists
a positive $C_\delta$ such that $\log C_\delta \geq \alpha_0$ and
\begin{equation}
  \label{bed}
k_t (\eta) \leq C_\delta^{|\eta|} \exp  \left[  (\langle a^{+}
\rangle - \delta )|\eta| t \right], \qquad \eta \in \Gamma_0.
\end{equation}
  \item[{\it (ii)}] Case $\langle a^{+} \rangle>0$ and $m>\langle a^{+}
\rangle$: for each $\varepsilon \in (0,m-\langle a^{+} \rangle)$,
there exists a positive $C_\varepsilon$ such that $\log
C_\varepsilon \geq \alpha_0$ and
\begin{equation}
  \label{Bedd}
k_t (\eta) \leq C_\varepsilon^{|\eta|} \exp (-\varepsilon t) ,
\qquad \eta \neq \emptyset.
\end{equation}
\item[{\it (iii)}] Case $\langle a^{+} \rangle =0$:
\begin{equation}
  \label{Est}
  k_t (\eta) \leq  k_0(\eta) \exp \left[ - E(\eta) t \right], \qquad \eta \in \Gamma_0.
\end{equation}
\end{itemize}
If $m=0$ and $a^{-} (x) = \vartheta a^{+} (x)$, then
\begin{equation}
  \label{bed1}
k_t (\eta) = \vartheta^{-|\eta|}, \qquad t\geq 0,
\end{equation}
is a stationary solution.
\end{theorem}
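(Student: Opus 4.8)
The plan is to prove the three estimates (i)--(iii) and the stationarity claim by comparing the solution $k_t$ of (\ref{20A}) with explicitly controllable super-solutions living in the scale $\{\mathcal{K}_\alpha\}$. The starting point is the a priori knowledge from Theorem \ref{1tm} that $k_t$ exists, is unique, stays in $\mathcal{K}^\star_\infty$, and satisfies $k_t(\emptyset)=1$. The key structural fact to exploit is the $(b,\vartheta)$-assumption (\ref{theta}): it says $b|\eta| + E^-(\eta) \geq \vartheta E^+(\eta)$, which is exactly what is needed to dominate the ``bad'' gain terms $A_2^\Delta$ and $B_2^\Delta$ by the ``good'' loss term $A_1^\Delta$ (which contains $-E(\eta)k(\eta) = -(m|\eta| + E^-(\eta))k(\eta)$). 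Concretely, I would write the evolution equation pointwise in $\eta$ and try to show that the ansatz $u_t(\eta) = c^{|\eta|} e^{\beta |\eta| t}$ (for (i)) or $u_t(\eta) = c^{|\eta|} e^{-\varepsilon t}$ (for (ii)) is a super-solution: i.e.\ $\frac{d}{dt}u_t \geq L^\Delta u_t$ pointwise, with $u_0 \geq k_0$. Since $k_0 = k_{\mu_0} \in \mathcal{K}_{\alpha_0}$, the bound $k_0(\eta) \leq e^{\alpha_0 |\eta|} \leq c^{|\eta|}$ holds as soon as $\log c \geq \alpha_0$, giving the required constraint $\log C_\delta \geq \alpha_0$.

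The heart of the matter is the super-solution inequality for $u_t(\eta) = c^{|\eta|}\phi(t)$. Plugging into (\ref{21A})--(\ref{23A}): $(A_1^\Delta u)(\eta) = -(m|\eta|+E^-(\eta))c^{|\eta|}\phi$; $(A_2^\Delta u)(\eta) = c^{-1}E^+(\eta)c^{|\eta|}\phi$ (replacing one factor by $E^+(x,\eta\setminus x)$ summed over $x$); the $B$-terms are integrals which I would bound using $\int_{\mathbb{R}^d} E^-(y,\eta)dy = \langle a^-\rangle |\eta|$ and $\int_{\mathbb{R}^d}\sum_{x\in\eta} a^+(x-y)dy = \langle a^+\rangle|\eta|$, so that $|(B^\Delta u)(\eta)| \leq (\langle a^-\rangle c + \langle a^+\rangle)|\eta| c^{|\eta|}\phi / c \leq (\langle a^-\rangle + \langle a^+\rangle)|\eta|c^{|\eta|}\phi$ for $c\geq 1$; here one keeps track of signs carefully since $B_1^\Delta$ is negative. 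Combining, the super-solution condition reduces (after dividing by $c^{|\eta|}\phi$) to $\phi'/\phi \cdot |\eta| \geq -(m|\eta| + E^-(\eta)) + c^{-1}E^+(\eta) + (\text{contribution of }B^\Delta)$, and using (\ref{theta}) to absorb $E^+(\eta) \leq \vartheta^{-1}(b|\eta| + E^-(\eta))$ one chooses $c$ large enough that $c^{-1}\vartheta^{-1} < 1$, so the $E^-(\eta)$ terms cancel with room to spare, leaving a condition of the form $\phi'/\phi \geq (\langle a^+\rangle - \delta)$ for a $\delta$ that can be pushed up toward $m$ as $c\to\infty$. This yields (i); case (ii) is the same computation where the surplus $m - \langle a^+\rangle > 0$ produces genuine exponential decay $e^{-\varepsilon t}$ with $\varepsilon < m - \langle a^+\rangle$. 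Case (iii), $\langle a^+\rangle = 0$, forces $a^+ \equiv 0$, so $A_2^\Delta = B_2^\Delta = B_1^\Delta = 0$ and (\ref{20A}) becomes the pure-death equation $\frac{d}{dt}k_t(\eta) = -E(\eta)k_t(\eta)$, solved explicitly by (\ref{Est}). The stationary solution (\ref{bed1}): when $m=0$ and $a^-(x) = \vartheta a^+(x)$, substitute $k(\eta) = \vartheta^{-|\eta|}$ into $L^\Delta k$ and verify termwise that $A_1^\Delta k + A_2^\Delta k = 0$ and $B_1^\Delta k + B_2^\Delta k = 0$, using $E^-(x,\eta\setminus x) = \vartheta E^+(x,\eta\setminus x)$ and the shift $|\eta\setminus x \cup y| = |\eta|$.

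To turn the super-solution heuristic into a rigorous proof I would invoke the comparison principle that underlies the construction in Theorem \ref{1tm}: since the semigroup built there is positivity-preserving on the cone $\mathcal{K}^+_\alpha$ (it is obtained as a limit of sub-stochastic-type approximants, as in \cite{Dima,FKKK}), and since $w_t := u_t - k_t$ solves the same linear equation with $w_0 \geq 0$, one concludes $w_t \geq 0$ for all $t$. The one genuine obstacle is making this comparison argument precise \emph{within the scale of Banach spaces}: $u_t$ itself does not lie in a fixed $\mathcal{K}_\alpha$ because its norm grows in $t$ (in case (i)), so one must run the comparison on each finite interval $[0,T]$ in a space $\mathcal{K}_{\alpha(T)}$ chosen large enough to contain both $k_t$ and $u_t$ for $t \leq T$, exactly as the ``$T_*$'' mechanism after Definition \ref{I2df} permits. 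I expect that the required positivity of the evolution on each such $\mathcal{K}_{\alpha(T)}$ is already available from the proof of Theorem \ref{1tm}, so the remaining work is bookkeeping: choosing $\alpha(T)$, verifying $u_t \in \mathcal{K}_{\alpha(T)}$ on $[0,T]$, and checking that $u_t$ is a super-solution in the sense needed (continuously differentiable, in the domain $\mathcal{D}^\Delta_{\alpha(T)}$). The stationarity and the pure-death case (iii) require no comparison argument at all — just direct substitution and, for uniqueness of the stationary solution among $\mathcal{K}_\alpha$-solutions, an appeal to Theorem \ref{1tm}.
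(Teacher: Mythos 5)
Your super-solution ansatz $c^{|\eta|}e^{(\langle a^{+}\rangle-\delta)|\eta|t}$, resp. $c^{|\eta|}e^{-\varepsilon t}$, with $\log c\geq\alpha_0$ and $c$ chosen via (\ref{theta}) so that $E^{-}(\eta)-c^{-1}E^{+}(\eta)\geq-(m-\delta)|\eta|$, is exactly the comparison function the paper uses, and your verification of the stationary solution (\ref{bed1}) by direct substitution is fine. The genuine gap is in how you propose to justify the comparison. You write that $w_t:=u_t-k_t$ can be shown nonnegative because ``the semigroup built in Theorem \ref{1tm} is positivity-preserving on the cone $\mathcal{K}^{+}_\alpha$''. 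It is not: the evolution operators $Q_{\alpha_2\alpha_1}(t;B^\Delta_b)$ for the full generator $L^\Delta=A^\Delta_b+B^\Delta_b$ are explicitly \emph{not} positive on $\mathcal{K}^{+}_{\alpha}$, because of the term $B^\Delta_1$ (see Remark \ref{Naha1rk}); the substochastic construction of Lemma \ref{1lm} concerns only $A_b$, not $L$. What Theorem \ref{1tm} (via the Identification Lemma \ref{Wolflm}) gives is positivity of $k_t$ itself, i.e.\ $k_t\in\mathcal{K}^\star\subset\mathcal{K}^{+}$ — not order preservation of the flow, so ``$w_0\geq0$ implies $w_t\geq0$'' has no justification as stated. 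The paper closes this gap in two steps: first (Lemma \ref{Biel-1lm}, formula (\ref{Ju1})) it dominates $k_t$ by $u_t=Q_{\alpha\alpha_1}(t;B^\Delta_{2,b})k_0$, where $B^\Delta_{2,b}$ is obtained by discarding the sign-indefinite $B^\Delta_1$ and compensating with $b|\eta|$; here positivity of $Q(\cdot;B^\Delta_{2,b})$, of $-B^\Delta_1$, and of $k_s$ (the last supplied by Lemma \ref{Wolflm}) enter through a Duhamel formula. Second, it compares this auxiliary $u_t$ with $r_t$ via (\ref{Ju4})--(\ref{Ju5}), using again only the positive evolution $Q(\cdot;B^\Delta_{2,b})$ and the nonpositive defect $D r_s$. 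Any correct write-up of your argument has to reproduce this detour through the positive auxiliary evolution (or some equivalent device); the naive comparison with the full $L^\Delta$-flow fails.

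A second, smaller but genuine error is case (iii): $\langle a^{+}\rangle=0$ kills $A^\Delta_2$ and $B^\Delta_2$, but \emph{not} $B^\Delta_1$, which is built from $a^{-}$ (and $\langle a^{-}\rangle>0$ is assumed). So (\ref{20A}) does not reduce to the decoupled pure-death equation and (\ref{Est}) is not its explicit solution; it is an inequality, obtained in the paper by comparing $k_t$ with $u_t=Q^{(0)}_{\alpha\alpha_0}(t)k_{\mu_0}$, $Q^{(0)}$ being the multiplication by $e^{-tE(\eta)}$ as in (\ref{LL}), and using once more $k_s\geq0$ together with positivity of $-B^\Delta_1$ in the Duhamel representation of $u_t-k_t$. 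Your treatment of the stationary solution and your bookkeeping remarks about running the estimates on $[0,T]$ in $\mathcal{K}_{\alpha(T)}$ are consistent with the paper's continuation argument and need no change.
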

The next statement relates the solution described in Theorems
\ref{1tm} and \ref{2tm} with the problem (\ref{Cauchy}), see
Definition \ref{I2df}.
\begin{corollary}
  \label{Bedcork}
In case (i) of Theorem \ref{2tm}, for each $T>0$, $k_t$ solves
(\ref{Cauchy}) in $\mathcal{K}_{\alpha_T}$ on the time interval $[0,
T)$, where
\begin{equation}
  \label{cork}
   \alpha_T = \log C_\delta + \left(\langle a^{+} \rangle -\delta
\right) T.
\end{equation}
In case (ii) (resp. (iii)),  $k_t$  solves (\ref{Cauchy}) in
$\mathcal{K}_\alpha$, $\alpha = \log C_\varepsilon$ (resp. any
$\alpha > \alpha_0$) on the time interval $[0, +\infty)$.
\end{corollary}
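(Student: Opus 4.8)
\emph{Overall strategy.} Theorem~\ref{1tm} already provides, for every horizon $T$, a curve $t\mapsto k_t$ which solves \eqref{Cauchy} in \emph{some} space $\mathcal{K}_{\alpha(T)}$ on $[0,T)$ in the sense of Definition~\ref{I1df} — this is precisely what a $\mathcal{K}_\infty$-solution in the sense of Definition~\ref{I2df} means, by the remark following it — and Theorem~\ref{2tm} tells us how small a space the orbit actually sits in. Thus the corollary is a pure \emph{descent} statement: passing from $\mathcal{K}_{\alpha(T)}$ to the sharper space named in the claim. Two facts are used throughout: by \eqref{ACmar}, \eqref{Iw}, \eqref{sta3}, \eqref{staS} the operator $L^\Delta$ restricts to a bounded map $\mathcal{K}_{\beta'}\to\mathcal{K}_{\beta}$ whenever $\beta'<\beta$; and (elementary interpolation in the scale \eqref{z31}) if $g_n\to g$ in $\mathcal{K}_{\beta''}$ while $\sup_n\|g_n\|_{\beta'}<\infty$, with $\beta'<\beta<\beta''$, then $g_n\to g$ in $\mathcal{K}_{\beta}$ as well.

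\emph{Case (i).} By \eqref{bed}, $k_t\in\mathcal{K}_{\beta(t)}$ with $\|k_t\|_{\beta(t)}\le1$, where $\beta(t):=\log C_\delta+(\langle a^{+}\rangle-\delta)t$; since $\delta<m\le\langle a^{+}\rangle$ we have $\beta(t)<\alpha_T$ for every $t<T$, with $\alpha_T$ as in \eqref{cork}. Fix $t_0<T$. On $[0,t_0]$ the orbit is bounded in the single space $\mathcal{K}_{\beta(t_0)}$ with $\beta(t_0)<\alpha_T$, so by the first fact $L^\Delta k_t\in\mathcal{K}_{\alpha_T}$ and, moreover, $\{L^\Delta k_t:t\in[0,t_0]\}$ is bounded in $\mathcal{K}_\gamma$ for each $\gamma\in(\beta(t_0),\alpha_T)$. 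Since $t\mapsto L^\Delta k_t$ is continuous into $\mathcal{K}_{\alpha(T)}$ (it is the derivative of the $\mathcal{K}_{\alpha(T)}$-solution) and bounded in $\mathcal{K}_{\beta(t_0)}$, the interpolation fact (or, when $\alpha(T)\le\alpha_T$, a plain embedding) upgrades this to continuity into $\mathcal{K}_{\alpha_T}$. Feeding this into the identity $k_t=k_{\mu_0}+\int_0^t L^\Delta k_s\,ds$ — which holds in $\mathcal{K}_{\alpha(T)}$ and hence, both sides now lying in $\mathcal{K}_{\alpha_T}$ with a $\mathcal{K}_{\alpha_T}$-continuous integrand, also in $\mathcal{K}_{\alpha_T}$ — shows $t\mapsto k_t$ is continuously differentiable into $\mathcal{K}_{\alpha_T}$ with derivative $L^\Delta_{\alpha_T}k_t$. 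Together with $k_t\in\mathcal{D}^\Delta_{\alpha_T}$ and $k_{\mu_0}\in\mathcal{K}_{\alpha_0}\subset\mathcal{D}^\Delta_{\alpha_T}$ (because $\alpha_0\le\log C_\delta<\alpha_T$), this is exactly a solution in $\mathcal{K}_{\alpha_T}$ on $[0,t_0]$; letting $t_0\uparrow T$ yields the claim on $[0,T)$.

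\emph{Cases (ii) and (iii).} These follow the same pattern, but the relevant bound is now \emph{global}, so the solution is obtained on $[0,+\infty)$. In case (iii), $\langle a^{+}\rangle=0$ annihilates $A^\Delta_2$ and $B^\Delta_2$, and \eqref{Est} gives $0\le k_t(\eta)\le k_{\mu_0}(\eta)$, hence $\|k_t\|_{\alpha_0}\le\|k_{\mu_0}\|_{\alpha_0}$; for any $\alpha>\alpha_0$ one picks $\gamma\in(\alpha_0,\alpha)$ and repeats the descent verbatim. In case (ii) ($m>\langle a^{+}\rangle$), \eqref{Bedd} shows the orbit is bounded in $\mathcal{K}_{\log C_\varepsilon}$; moreover, in this mortality-dominated regime the construction underlying Theorem~\ref{1tm} does not force any enlargement of the scale, so the orbit remains inside $\mathcal{K}_{\alpha_0}$, and — replacing $C_\varepsilon$ by a larger constant if necessary so that $\log C_\varepsilon>\alpha_0$ — we again get $L^\Delta k_t\in\mathcal{K}_{\log C_\varepsilon}$ and the descent applies with target $\mathcal{K}_{\log C_\varepsilon}$ on $[0,+\infty)$. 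The stationary function \eqref{bed1}, which lies in every $\mathcal{K}_\alpha$ with $\alpha\ge-\log\vartheta$, is consistent with all of the above.

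\emph{Main obstacle.} Everything rests on the descent step: promoting the $\mathcal{K}_\infty$-regularity supplied by Theorem~\ref{1tm} to regularity in the \emph{norm} of the target Banach space, the crux being the continuity of $t\mapsto L^\Delta k_t$ in that norm. This requires the orbit to stay strictly inside the target, so that an intermediate space $\gamma$ is available for the boundedness/interpolation argument. In case (i) this is automatic since $\beta(t)<\alpha_T$ for $t<T$; in case (ii) it is exactly where one must invoke that mortality domination prevents the evolution from leaving the space $\mathcal{K}_{\alpha_0}$ containing the initial correlation function.
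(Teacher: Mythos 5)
The paper gives no separate proof of this corollary: it is meant to be read off from the bounds of Theorem \ref{2tm} together with the local solvability and uniqueness of Lemma \ref{B1lm} (near any $t^*<T$ one restarts the evolution from $k_{t_1}$, which the estimate (\ref{bed}) places in $\mathcal{K}_{\beta(t_1)}$ with $\beta(t_1)<\alpha_T$, and identifies $k_t$ with $Q_{\alpha_T\beta(t_1)}(t-t_1;B^\Delta_b)k_{t_1}$ by uniqueness). Your descent — boundedness of $L^\Delta$ down the scale (\ref{ACmar}), (\ref{Iw}), the two-norm interpolation fact in the scale (\ref{M}), and the integral identity $k_t=k_{\mu_0}+\int_0^t L^\Delta k_s\,ds$ — achieves the same localization and is sound in case (i) when $\langle a^{+}\rangle-\delta>0$ and in case (iii).

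The genuine soft spot is case (ii). The assertion that ``the construction underlying Theorem \ref{1tm} does not force any enlargement of the scale, so the orbit remains inside $\mathcal{K}_{\alpha_0}$'' is established nowhere and is not true in general: nothing in the construction confines the orbit to the space of the initial datum, and the only control available is (\ref{Bedd}) itself, which places the orbit in $\mathcal{K}_{\log C'}$ for an admissible constant $C'$ whose logarithm may well exceed $\alpha_0$ (e.g.\ when $1/\vartheta_\varepsilon>e^{\alpha_0}$). Fortunately the claim is also unnecessary: since any constant larger than an admissible one is again admissible for (\ref{Bedd}), choose the corollary's $C_\varepsilon$ strictly larger than some admissible $C'$; then the orbit lies in $\mathcal{K}_{\log C'}$, strictly inside the target $\mathcal{K}_{\log C_\varepsilon}$, so $k_t\in\mathcal{D}^\Delta_{\log C_\varepsilon}$ and your descent applies — this is exactly the enlargement you already invoke, and the argument should rest on it alone rather than on any invariance of $\mathcal{K}_{\alpha_0}$. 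The same slack is needed in the sub-case of (i) permitted when $b=0$, namely $\delta=m=\langle a^{+}\rangle$, where $\beta(t)\equiv\alpha_T$ and your strict inequality $\beta(t)<\alpha_T$ fails; enlarging $C_\delta$ (allowed, since only lower bounds on $C_\delta$ are imposed) restores the intermediate space needed for the boundedness of $L^\Delta k_t$ in $\mathcal{K}_{\alpha_T}$.
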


\subsection{Comments and comparison}

\label{SS3.3a}

\subsubsection{On the basic assumption}

By means of the  function
\begin{equation}
  \label{phi}
 \phi_\vartheta (x) = a^{-} (x) - \vartheta a^{+} (x)
\end{equation}
one can rewrite (\ref{theta}) in the following form
\[
 \sum_{x\in \eta} \sum_{y\in \eta \setminus x} \phi_\vartheta (x-y) \geq - b |\eta|, \qquad \eta \in \Gamma_0.
\]
This resembles the stability condition (with stability
constant $b\geq0$) for the interaction potential $\phi_\vartheta$
used in the statistical mechanics of continuum systems of
interacting particles, see \cite[Chapter 3]{Ruelle}. Below we employ
some techniques developed therein to prove that important classes of
the kernels $a^{\pm}$ have this property, see Propositions
\ref{Ma2pn} and \ref{Ma3pn}.

The $(b, \vartheta)$ assumption holds with $b=0$ if and only if
(\ref{dom}) does. In this case, the dispersal kernel $a^{+}$ decays
faster than the competition kernel $a^{-}$ (short dispersal). It can
be characterized as the possibility for each daughter-entity to kill
her mother-entity, or to be killed by her. In the previous works on
this model \cite{Dima,DimaN2,FKKK} the results were based on this
short dispersal condition. The novelty of the result of Proposition
\ref{Ma2pn} is that it covers also the case of long dispersal where
the range of $a^{+}$ is finite but can be bigger than that of
$a^{-}$. Noteworthy, by our Proposition \ref{Ma2pn} it follows that
the interaction potential $\Phi$ used in \cite{PechZ} is stable,
which was unknown to the authors of that paper, cf. \cite[page
146]{PechZ}. Proposition \ref{Ma3pn} describes Gaussian kernels, for
which the basic assumption is valid also for both long and short
dispersals. In this paper, we restricted our attention to the
classes of kernels described in Propositions \ref{Ma2pn} and
\ref{Ma3pn}. Extensions beyond this classes, which we plan to
realize in a separate work, can be made by means of the
corresponding methods of the statistical mechanics of interacting
particle systems.

\subsubsection{On the results}

An important feature of the results of Theorems \ref{1tm} and
\ref{2tm} is that the intrinsic mortality rate $m\geq 0$ can be
arbitrary. Theorem \ref{1tm} gives a general existence of the
evolution $\mu_0 \mapsto \mu_t$, $t>0$, in the class of
sub-Poissonian states through the evolution of the corresponding
correlation functions. Its `ecological' outcome is that the
competition in the form as in (\ref{R20}), (\ref{Ra20}) excludes
clustering provided the kernels satisfy (\ref{theta}). A complete
characterization of the evolution $k_0 \mapsto k_t$ is then given in
Theorem \ref{2tm}. By means of it this evolution is `localized' in
the spaces $\mathcal{K}_\alpha$ in Corollary \ref{Bedcork}.
According to Theorem \ref{2tm}, for $m < \langle a^{+} \rangle$, or
$m \leq \langle a^{+} \rangle$ and $b>0$ in (\ref{phi1}), the
evolution described in Theorem \ref{1tm} takes place in an
increasing scale $\{\mathcal{K}_{\alpha_T}\}_{T\geq 0}$ of the
Banach spaces introduced in (\ref{z30}) -- (\ref{z31}), cf.
(\ref{cork}). If $m
> \langle a^{+} \rangle$,  the
evolution holds in one and the same space, see  Corollary
\ref{Bedcork}. The only difference between the cases of $b>0$ and
$b=0$ is that one can take $\delta =m$ in the latter case. This
yields different results for $m = \langle a^{+} \rangle$, where the
evolution takes place in the same space $\mathcal{K}_\alpha$ with
$\alpha = \log C_m$. Note also that  for $m=0$, one should take
$\delta<0$. For $m> \langle a^{+} \rangle$, it follows from
(\ref{Bedd}) that the population dies out: for $\langle a^{+}
\rangle >0$, the following holds
\[
k^{(n)}_{\mu_t} (x_1 ,\dots , x_n) \leq e^{-\varepsilon
t}k^{(n)}_{\mu_0} (x_1 ,\dots , x_n), \quad t>0,
\]
for some $\varepsilon \in (0, m - \langle a^{+} \rangle)$, almost
all $(x_1 ,\dots , x_n)$, and each $n\in \mathbb{N}$.
 For $m>0$ and $\langle a^{+} \rangle =0$, by
(\ref{Est}) we get
\begin{equation*}
% \label{R22AA}
 k^{(n)}_{\mu_t} (x_1 ,\dots , x_n) \leq \exp\left(-  n m t \right) k^{(n)}_{\mu_0} (x_1 ,\dots , x_n), \quad t>0.
\end{equation*}
This means that  $ k^{(n)}_{\mu_t} (x_1 ,\dots , x_n)\to 0$ as $n\to
+\infty$ for sufficiently big $t>0$. This phenomenon does not follow
from (\ref{Bedd}). Finally, we mention that (\ref{bed1}) corresponds
to a special case of (\ref{dom}) and $m=b=0$.

\subsubsection{Comparison}

Here we compare Theorems \ref{1tm} and \ref{2tm} with the
corresponding results obtained for this model in \cite{Dima,DimaN2}
(where it was called BDLP model), and in \cite{FKKK}. Note that
these are the only works where the infinite particle version of the
model considered here was studied. In \cite{Dima,DimaN2}, the model
was supposed to satisfy the conditions, see \cite[Eqs. (3.38) and
(3.39)]{DimaN2}, which in the present notations can be formulated as
follows: (a) (\ref{dom}) holds with a given $\vartheta>0$; (b) $m
> 16 \langle a^{-} \rangle / \vartheta$ holding with the same
$\vartheta$. Under these conditions the global evolution $k_0
\mapsto k_t$  was obtained in $\mathcal{K}_\alpha$ with some
$\alpha\in \mathbb{R}$ by means of a $C_0$-semigroup. No information
was available on whether $k_t$ is a correlation function and hence
on the sign of $k_t$. In \cite{FKKK}, the restrictions were reduced
just to (\ref{dom}). Then the evolution $k_0 \mapsto k_t$ was
obtained in a scale of Banach spaces $\mathcal{K}_\alpha$ as in
Theorem \ref{1tm}, but on a bounded time interval. Also in
\cite{FKKK}, no information was obtained on whether $k_t$ is a
correlation function. Until this our work no results on the
extinction as in (\ref{Bedd}) and on the case of $a^{+} \equiv 0$
were known.

\subsection{Kernels satisfying the basic assumption}
\label{SS3.3}

Our aim now is to show that the assumption
(\ref{theta}) can be satisfied in the most of `realistic' models. We
begin, however, by establishing an important property of the kernels
satisfying (\ref{theta}). To this end we rewrite (\ref{theta}) in the
form
\begin{equation}
  \label{phi1}
\Phi_\vartheta (\eta) := \sum_{x\in \eta} \sum_{y\in \eta \setminus
x} \left[ a^{-} (x-y) - \vartheta a^{+} (x-y)\right] \geq - b
|\eta|, \qquad \eta \in \Gamma_0.
\end{equation}
\begin{proposition}
  \label{Ma1pn}
Assume that (\ref{phi1}) holds with some $\vartheta_0>0$ and
$b_0>0$. Then for each $\vartheta < \vartheta_0$, it also holds with
$b = b_0 \vartheta/ \vartheta_0$.
\end{proposition}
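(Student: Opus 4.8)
The plan is to prove Proposition \ref{Ma1pn} by a direct interpolation/convexity argument, exploiting the fact that $\Phi_\vartheta(\eta)$ is affine in $\vartheta$. First I would observe that, by the definition in (\ref{phi1}),
\[
\Phi_\vartheta(\eta) = E^{-}(\eta) - \vartheta E^{+}(\eta),
\]
so that for any two values $\vartheta \leq \vartheta_0$ we may write $\vartheta = t\vartheta_0$ with $t = \vartheta/\vartheta_0 \in [0,1]$, and then
\[
\Phi_\vartheta(\eta) = E^{-}(\eta) - t\vartheta_0 E^{+}(\eta) = t\bigl(E^{-}(\eta) - \vartheta_0 E^{+}(\eta)\bigr) + (1-t) E^{-}(\eta) = t\,\Phi_{\vartheta_0}(\eta) + (1-t) E^{-}(\eta).
\]
This is the key algebraic identity and essentially the whole content of the proof.

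Next I would invoke the two available lower bounds. By hypothesis, $\Phi_{\vartheta_0}(\eta) \geq -b_0 |\eta|$ for all $\eta \in \Gamma_0$. Moreover $E^{-}(\eta) = \sum_{x \in \eta}\sum_{y \in \eta \setminus x} a^{-}(x-y) \geq 0$ because $a^{-} \geq 0$ by (\ref{AA}). Substituting both bounds into the convex-combination identity gives
\[
\Phi_\vartheta(\eta) \geq t\bigl(-b_0 |\eta|\bigr) + (1-t)\cdot 0 = -\frac{\vartheta}{\vartheta_0} b_0 |\eta|,
\]
which is precisely (\ref{phi1}) with the constant $b = b_0 \vartheta/\vartheta_0$, as claimed. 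Finally I would note that the argument uses only $t \in [0,1]$, i.e. $\vartheta \leq \vartheta_0$; the statement as phrased restricts to $\vartheta < \vartheta_0$, but the borderline case $\vartheta = \vartheta_0$ is trivial anyway.

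Since every step is an elementary identity or a sign observation, there is no real obstacle here; the only point requiring any care is making sure the decomposition is set up so that the "leftover" term $E^{-}(\eta)$ — rather than, say, $-E^{+}(\eta)$ — is the one whose sign we control, which is why we expand around $\vartheta_0$ (the larger value) and keep $E^{-}$ with the nonnegative coefficient $(1-t)$. One could alternatively phrase the same computation without introducing $t$, directly as $\Phi_\vartheta(\eta) = \Phi_{\vartheta_0}(\eta) + (\vartheta_0 - \vartheta)E^{+}(\eta)$ and then bounding, but this bounds $\Phi_\vartheta$ below by $\Phi_{\vartheta_0}$ plus a nonnegative term, giving $b = b_0$, which is weaker; the convex-combination route is what yields the sharper constant $b_0 \vartheta/\vartheta_0$ stated in the proposition.
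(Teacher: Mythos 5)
Your proof is correct and is essentially the same as the paper's: your convex-combination identity $\Phi_\vartheta = t\,\Phi_{\vartheta_0} + (1-t)E^{-}$ with $t=\vartheta/\vartheta_0$ is just a rewriting of the factored identity $\Phi_\vartheta(\eta) = \frac{\vartheta}{\vartheta_0}\bigl[\bigl(\frac{\vartheta_0}{\vartheta}-1\bigr)E^{-}(\eta) + \Phi_{\vartheta_0}(\eta)\bigr]$ used in the paper, and both arguments then invoke exactly the same two facts, namely $E^{-}(\eta)\geq 0$ and $\Phi_{\vartheta_0}(\eta)\geq -b_0|\eta|$.
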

\begin{proof}
For $\vartheta\in (0,\vartheta_0]$, we have
\begin{equation*}
\Phi_\vartheta (\eta) = \frac{\vartheta}{\vartheta_0} \left[ \left(
\frac{\vartheta_0}{\vartheta} - 1\right)E^{-} (\eta) +
\Phi_{\vartheta_0}(\eta)\right] \geq -
\frac{\vartheta}{\vartheta_0}b_0 |\eta|,
\end{equation*}
which yields the proof.
\end{proof}
In the following two propositions we give examples of the kernels
with the property (\ref{theta}). In the first one, we assume that
the dispersal kernel has finite range, which is quite natural in
many applications. The competition kernel in turn is assumed to be
just nontrivial.
\begin{proposition}
  \label{Ma2pn}
 In addition to (\ref{AA}) and (\ref{14A}) assume that the kernels
 $a^{\pm}$ have the following properties:
\vskip.1cm
\begin{itemize}
 \item[{\it (a)}] there exist positive $c^{-}$ and $r$ such that $a^{-}
 (x) \geq c^{-}$ for $|x|< r$;
\item[{\it (b)}] there exist positive $c^{+}$ and $R$ such that $a^{+}
 (x) \leq c^{+}$ for $|x|< R$ and $a^{+}
 (x) =0$ for $|x|\geq  R$.
\end{itemize}
Then for each $b>0$, there exists $\vartheta >0$ such that
(\ref{phi1}) holds for these $b$ and $\vartheta$.
\end{proposition}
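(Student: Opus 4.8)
The plan is to establish the inequality $\Phi_\vartheta(\eta) \geq -b|\eta|$ in \eqref{phi1} for suitable $\vartheta$ by localizing the double sum $E^{+}(\eta)$ according to the finite range $R$ of $a^{+}$, and then absorbing the local contributions into the ``diagonal'' lower bound provided by $a^{-}$ near the origin. The starting observation is that $a^{+}(x-y) \neq 0$ only when $|x-y| < R$, and on that region $a^{+}(x-y) \leq c^{+}$; hence for every $\eta \in \Gamma_0$,
\[
E^{+}(\eta) = \sum_{x\in\eta}\sum_{y\in\eta\setminus x} a^{+}(x-y) \leq c^{+} \sum_{x\in\eta} \bigl| \{ y \in \eta\setminus x : |x-y| < R \} \bigr|.
\]
So everything reduces to controlling, for a fixed configuration, the number of pairs of points lying within distance $R$ of each other, in terms of $E^{-}(\eta)$ and $|\eta|$.

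First I would partition $\mathbb{R}^d$ into a fixed cubic grid of mesh chosen comparable to $r$ (say cubes of side $r/\sqrt{d}$, so any two points in a common cube are at distance $< r$). A ball of radius $R$ meets only a bounded number $M = M(d, r, R)$ of such cubes, where $M$ depends only on the ratio $R/r$ and the dimension. Writing $\eta = \bigsqcup_k \eta_k$ for the (finite) decomposition of $\eta$ into the grid cubes, the number of ordered pairs $(x,y)$ with $x \in \eta$, $y \in \eta\setminus x$, $|x-y| < R$ is bounded by $M \sum_k |\eta_k|(|\eta_k| - 1) + M\sum_k |\eta_k|$ — more crudely, by a constant times $\sum_k |\eta_k|^2$. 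On the other hand, since $a^{-}(x-y) \geq c^{-}$ whenever $x,y$ lie in a common cube, we get the lower bound $E^{-}(\eta) \geq c^{-} \sum_k |\eta_k|(|\eta_k|-1)$. Combining these, $E^{+}(\eta) \leq c^{+} M \bigl[ c^{-,-1} E^{-}(\eta) + |\eta| \bigr] \cdot (\text{const})$, i.e. there are constants $C_1, C_2$ depending only on $a^{\pm}$ and $d$ with
\[
E^{+}(\eta) \leq C_1 E^{-}(\eta) + C_2 |\eta|, \qquad \eta \in \Gamma_0.
\]
Given this, for any target $b > 0$ set $\vartheta = \min\{1/C_1,\ b/C_2\}$; then $\vartheta E^{+}(\eta) \leq \vartheta C_1 E^{-}(\eta) + \vartheta C_2 |\eta| \leq E^{-}(\eta) + b|\eta|$, which is exactly \eqref{phi1} for this pair $(b,\vartheta)$, and hence \eqref{theta}.

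The main obstacle is the combinatorial counting step — getting the clean bound ``number of within-$R$ pairs $\lesssim \sum_k |\eta_k|^2$'' honestly, including the handling of the linear term that arises because a point and itself should not be counted, and because pairs $(x,y)$ with $x,y$ in different but nearby cubes must be charged to one of the two cubes via an elementary inequality such as $|\eta_j||\eta_k| \leq \tfrac12(|\eta_j|^2 + |\eta_k|^2)$. This is the device from \cite[Chapter 3]{Ruelle} for proving stability of finite-range potentials, and once the grid is fixed the constants are explicit; the only care needed is that $M$ is chosen to genuinely dominate the number of grid cubes within distance $R$ of any given cube, so that every contributing pair is accounted for. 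The rest is bookkeeping, and Proposition \ref{Ma1pn} is not even needed here since we produce the desired $\vartheta$ directly for each prescribed $b > 0$.
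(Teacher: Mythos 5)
Your argument is correct, but it takes a genuinely different route from the paper. The paper proves the proposition by induction on $|\eta|$, following \cite{Angel}: after disposing of the case $r\geq R$ by pointwise domination, it selects a point $x\in\eta$ with the maximal number $\bar n$ of $r$-close neighbours, bounds the number of its $R$-close but $r$-distant neighbours by a sphere-packing count, $|\xi^{+}_x|\leq(\bar n +1)(\Xi(d,r,R)-1)$ with $\Xi(d,r,R)$ the maximal number of balls of radius $r/2$ packed into a ball of radius $R+r/2$, and shows that removing this $x$ does not increase $U_\vartheta$, arriving at $\vartheta=\min\{c^{-}/c^{+}\Xi(d,r,R),\, b/2c^{+}(\Xi(d,r,R)-1)\}$. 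You instead run the classical cell-partition (Ruelle/Fisher-type) stability argument: a grid of half-open cubes of side $r/\sqrt d$, the count $M(d,r,R)$ of cubes met by an $R$-ball, and the inequality $|\eta_j||\eta_k|\leq\tfrac12(|\eta_j|^2+|\eta_k|^2)$ give the global bound $E^{+}(\eta)\leq C_1 E^{-}(\eta)+C_2|\eta|$ with $C_1=c^{+}M/c^{-}$, $C_2=c^{+}M$, and then $\vartheta=\min\{1/C_1,\,b/C_2\}$ yields (\ref{phi1}) directly; your counting step, including the symmetry of the cube-adjacency relation needed to absorb cross terms, is sound. What your route buys is a non-inductive, one-shot proof with no case split on $r$ versus $R$ and with the stronger intermediate estimate $E^{+}\leq C_1E^{-}+C_2|\eta|$, which makes the dependence $\vartheta\sim b$ for small $b$ transparent (consistent with Proposition \ref{Ma1pn}, which you correctly note you do not need); what the paper's route buys is constants expressed through the packing number $\Xi(d,r,R)$, typically smaller than the grid count $M$, hence a somewhat larger admissible $\vartheta$ for a given $b$, and a scheme that parallels the stability proof it cites. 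Both proofs are valid.
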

\begin{proof}
For $r\geq R$, (\ref{phi1}) holds with $b= 0$ and $\vartheta=
c^{-}/c^{+}$. Thus, it remains to consider the case $r<R$.

For $|\eta|=0$ and $|\eta|=1$, (\ref{phi1}) trivially holds with
each $b>0$ and $\vartheta>0$. For $|\eta|=2$, (\ref{phi1}) holds
whenever $\vartheta\leq b/c^{+}$. For $|\eta|>2$, we apply an
induction in $|\eta|$, similarly as it was done in \cite{Angel}. For
$x\in \eta$, we define
\[
\xi^{-}_x = \{y\in \eta: |y-x|<r\}, \quad \xi^{+}_x = \{y\in \eta: r
\leq |y-x|< R\}.
\]
Set
\begin{equation*}
  %\label{M1}
 U_\vartheta (\eta) = \Phi_\vartheta (\eta) + b|\eta| = b|\eta| +  E^{-}(\eta) -
 \vartheta E^{+} (\eta).
\end{equation*}
Then the next estimate holds true for each $x\in \eta$:
\begin{eqnarray}
  \label{M2}
 U_\vartheta (x,\eta\setminus x) &:= &  U_\vartheta (\eta) -  U_\vartheta (\eta\setminus
 x)\\[.2cm] & = & b + 2 E^{-} (x, \eta\setminus x) - 2 \vartheta E^{+} (x, \eta\setminus x)
 \nonumber \\[.2cm] & \geq & b + 2(c^{-} - \vartheta c^{+})
 |\xi_x^{-}| - 2 \vartheta c^{+}
 |\xi_x^{+}|. \nonumber
\end{eqnarray}
Given $n>2$ and positive $\vartheta$ and $b$, assume that
$U_\vartheta (\eta) \geq 0$ for each $|\vartheta|=n-1$. Then to make
the inductive step by means of (\ref{M2}) we have to show that, for
each $\eta$ such that $|\eta|=n$, there exists $x\in \eta$ such that
$U_\vartheta (x,\eta\setminus x)\geq 0$.  Set
\begin{equation}
  \label{M3}
\bar{n} = |\xi^{-}_x| = \max_{y\in \eta}|\xi^{-}_y|, \qquad x\in
\eta.
\end{equation}
If $\bar{n}=0$, then $\eta$ is such that $|y-z|\geq r$ for each
distinct $y,z\in \eta$. In this case, the balls $B_z:= \{y\in
\mathbb{R}^d: |y-z|< r/2\}$,  $z\in \eta$, do not overlap. Then
$|\xi^{+}_x|\leq \Xi (d, r, R)-1 \leq  \Delta (d) (1 + 2R/r)^d-1$,
where $\Xi (d, r, R)$ is the maximum number of rigid spheres of
radius $r/2$ packed in a ball of radius $R+r/2$, and $\Delta (d)$ is
the density of the densest packing of equal rigid spheres in
$\mathbb{R}^d$, see e.g. \cite[Chapter 1]{CS}. We apply this in
(\ref{M2}) and get that $U_\vartheta (x,\eta\setminus x)\geq 0$
whenever $\vartheta \leq b/2 c^{+}(\Xi (d, r, R) - 1)$. For $\bar{n}
>0$, let $x$ be as in (\ref{M3}). Choose $y_1, \dots , y_s$ in
$\xi^+_x$ such that the balls $B_x$ and $B_{y_i}$, $i=1, \dots , s$,
realize the densest possible packing of the ball of radius $R+r/2$
centered at $x$. Then $s \leq \Xi (d, r, R)-1$ and, for each $y\in
\xi_x^{+}$, one finds $i$ such that $|y-y_i|< r$. Otherwise $B_y$
would not overlap each $B_{y_i}$, and thus the mentioned packing is
not the densest one. Therefore, the balls $C_i:=\{z\in \mathbb{R}^d:
|z-y_i|<r\}$, $i=1, \dots , s$, cover $\xi_x^{+}$. By (\ref{M3})
each $C_i$ contains $\bar{n}+1$ elements at most.  This yields
\[
|\xi_x^{+}|\leq (\bar{n}+1) (\Xi (d, r, R) -1).
\]
Now we apply this in (\ref{M2}) and obtain that $U_\vartheta
(x,\eta\setminus x)\geq 0$ for
\begin{equation*}
 %\label{M4}
 \vartheta =\min\left\{\frac{c^{-}}{c^{+}\Xi (d, r, R)}; \frac{b}{2c^{+}(\Xi (d, r, R)-1)}\right\}.
\end{equation*}
Thus, the inductive step can be done, which yields the proof.
\end{proof}
\vskip.1cm  As an example of kernels with infinite range we consider
the Gaussian kernels
\begin{equation}
 \label{M5}
 a^{\pm} (x) = \frac{c_{\pm}}{(2\pi \sigma_{\pm}^2)^{d/2}}
 \exp\left(- \frac{1}{2\sigma_{\pm}^2}|x|^2 \right),
\end{equation}
where  $c_{\pm}>0$ and $\sigma_{\pm } >0$ are parameters.
\begin{proposition}
 \label{Ma3pn}
Let $a^{\pm}$ be as in (\ref{M5}). Then for each $b>0$, there exists
$\vartheta$ such that (\ref{theta}) holds for these $\vartheta$ and
$b$.
\end{proposition}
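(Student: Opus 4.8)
\emph{Strategy.} The plan is to verify the $(b,\vartheta)$-assumption (\ref{theta}), equivalently (\ref{phi1}), by a dichotomy on the ordering of the range parameters $\sigma_{\pm}$ in (\ref{M5}): short dispersal will be handled by pointwise domination and long dispersal by a Fourier (Plancherel) comparison, with the slack term $b|\eta|$ absorbing the error in the latter case.

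\emph{Short dispersal, $\sigma_{+}\le\sigma_{-}$.} First I would record that, for the kernels in (\ref{M5}),
\[
\frac{a^{-}(x)}{a^{+}(x)}=\frac{c_{-}}{c_{+}}\Bigl(\frac{\sigma_{+}}{\sigma_{-}}\Bigr)^{d}\exp\!\left(\frac{|x|^{2}}{2}\Bigl(\frac{1}{\sigma_{+}^{2}}-\frac{1}{\sigma_{-}^{2}}\Bigr)\right),\qquad x\in\mathbb{R}^{d}.
\]
When $\sigma_{+}\le\sigma_{-}$ the exponent is nonnegative, so this ratio is minimized at $x=0$, giving $a^{-}(x)\ge\vartheta\,a^{+}(x)$ for all $x$ with $\vartheta:=\tfrac{c_{-}}{c_{+}}(\sigma_{+}/\sigma_{-})^{d}>0$. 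Hence (\ref{dom}) holds, and, as noted after Assumption \ref{Ass1}, this gives (\ref{theta}) with $b=0$; a fortiori (\ref{theta}) holds for this $\vartheta$ and every $b>0$ (so that Proposition \ref{Ma1pn} is not even needed here, since $\vartheta$ is exhibited directly).

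\emph{Long dispersal, $\sigma_{+}>\sigma_{-}$.} Now no pointwise domination is available, so I would instead use that $\widehat{a^{\pm}}(p)=c_{\pm}e^{-\sigma_{\pm}^{2}|p|^{2}/2}\ge 0$. Writing $S_{\eta}(p)=\sum_{x\in\eta}e^{ip\cdot x}$, Fourier inversion yields, for every $\eta\in\Gamma_{0}$,
\[
\sum_{x,y\in\eta}a^{\pm}(x-y)=\frac{c_{\pm}}{(2\pi)^{d}}\int_{\mathbb{R}^{d}}e^{-\sigma_{\pm}^{2}|p|^{2}/2}\,|S_{\eta}(p)|^{2}\,dp .
\]
Since $\sigma_{+}>\sigma_{-}$ forces $e^{-\sigma_{+}^{2}|p|^{2}/2}\le e^{-\sigma_{-}^{2}|p|^{2}/2}$ and $|S_{\eta}|^{2}\ge 0$, this gives $\sum_{x,y\in\eta}a^{+}(x-y)\le\tfrac{c_{+}}{c_{-}}\sum_{x,y\in\eta}a^{-}(x-y)$. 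Removing the diagonal contributions $|\eta|\,a^{\pm}(0)$ and recalling (\ref{15A}),
\[
E^{+}(\eta)\le\frac{c_{+}}{c_{-}}\,E^{-}(\eta)+D\,|\eta|,\qquad D:=\frac{c_{+}}{c_{-}}a^{-}(0)-a^{+}(0)=\frac{c_{+}}{(2\pi)^{d/2}}\bigl(\sigma_{-}^{-d}-\sigma_{+}^{-d}\bigr)>0.
\]
Given $b>0$, set $\vartheta:=\min\{c_{-}/c_{+},\,b/D\}$; then $\vartheta c_{+}/c_{-}\le 1$, $\vartheta D\le b$, and since $E^{-}(\eta)\ge 0$,
\[
\vartheta\,E^{+}(\eta)\le\frac{\vartheta c_{+}}{c_{-}}E^{-}(\eta)+\vartheta D\,|\eta|\le E^{-}(\eta)+b\,|\eta|,
\]
which is exactly (\ref{theta}).

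\emph{Main obstacle.} The delicate point is the long-dispersal case. There the naive positive-type bound $\Phi_{\vartheta}(\eta)\ge-(a^{-}(0)-\vartheta a^{+}(0))|\eta|$ (valid once $\vartheta\le c_{-}/c_{+}$) only yields (\ref{theta}) for $b$ bounded away from $0$, because $a^{-}(0)-\vartheta a^{+}(0)$ cannot be made arbitrarily small while the Fourier symbol stays nonnegative. The fix is to compare the \emph{full} quadratic forms $\sum_{x,y\in\eta}a^{\pm}(x-y)$ — where the monotonicity of $\sigma\mapsto e^{-\sigma^{2}|p|^{2}/2}$ is clean — and then to dump the resulting diagonal discrepancy $D|\eta|$ into the slack term $b|\eta|$ by shrinking $\vartheta$. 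One must also keep track of the direction of this monotonicity, which is precisely why the short- and long-dispersal regimes require genuinely different arguments.
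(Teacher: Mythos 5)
Your proof is correct and follows essentially the same route as the paper: the case $\sigma_{+}\le\sigma_{-}$ is handled by pointwise domination, and for $\sigma_{+}>\sigma_{-}$ you use nonnegativity of the Gaussian Fourier symbols to obtain stability of $a^{-}-(c_{-}/c_{+})a^{+}$ with constant given by the diagonal term, then shrink $\vartheta$ to absorb that constant into the prescribed $b$. Your explicit $|S_{\eta}(p)|^{2}$ computation and the choice $\vartheta=\min\{c_{-}/c_{+},\,b/D\}$ merely inline what the paper does by invoking Bochner's theorem and Proposition \ref{Ma1pn}.
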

\begin{proof}
For $\sigma_{-} \geq \sigma_{+}$, we have  $a^{-}(x) \geq \vartheta
a^{+}(x)$ for all $x$ and
\[
\vartheta \leq \left(\frac{\sigma_{+} c_{-}^{1/d}}{\sigma_{-}
c_{+}^{1/d}} \right)^d.
\]
Then (\ref{phi1}), and thus (\ref{theta}), hold for such $\vartheta$
and all $b\geq 0$. For $\sigma_{-} < \sigma_{+}$, we can write, see
(\ref{phi}),
\[
 \phi_\vartheta(x) = \int_{\mathbb{R}^d} \hat{\phi}_\vartheta (k) \exp(i k\cdot x) d k,
\]
where
\begin{equation*}
 %\label{M6}
\hat{\phi}_\vartheta (k) = c_{-}  \exp\left(- \frac{1}{2}
\sigma_{-}^2|k|^2 \right) \left[1 - \vartheta\frac{c_{+}}{c_{-}}
\exp\left(- \frac{1}{2}(\sigma_{+}^2 - \sigma_{-}^2)|k|^2 \right)
\right].
\end{equation*}
For $\vartheta_0= c_{-} / c_{+}$, we have that
$\hat{\phi}_{\vartheta_0} (k)\geq 0$ for all $k\in \mathbb{R}^d$.
Then $\phi_{\vartheta_0}$ is positive definite in the sense of
\cite[Section 3.2]{Ruelle}. This means that it is the Fourier
transform of a positive finite measure on $\mathbb{R}^d$, and hence
by the Bochner theorem it follows that
\[
 \sum_{x,y\in \eta} \phi_{\vartheta_0} (x-y) = \phi_{\vartheta_0} (0) |\eta| + \Phi_{\vartheta_0} (\eta) \geq 0.
\]
Thus, $\Phi_{\vartheta_0}$ satisfies (\ref{phi1}) with stability
constant $b_0= \phi_{\vartheta_0}(0)$. Then we apply Proposition
\ref{Ma1pn} and obtain that (\ref{phi1}) holds for
\[
 \vartheta = \frac{(2\pi\sigma_{-}^2)^{d/2} b}{\sigma_{+}\left(1-\left(\frac{\sigma_{-}}{\sigma_{+}}\right)^d  \right)}
\]
which completes the proof.

\end{proof}

\section{Evolution of Correlation Functions and States}

\label{S4}

In this section we prove Theorems \ref{1tm} and \ref{2tm} assuming
the validity of  Lemma \ref{Wolflm} given below. In the next section
we prove this lemma. The proof of Theorem \ref{1tm} is based on the
construction of two families of bounded operators performed in
subsection \ref{SS4.2}. By means of one of them we obtain the
solution of the problem (\ref{Cauchy}) on a bounded time interval,
similarly as it was done in \cite{FKKK}. Next, assuming that Lemma
\ref{Wolflm} holds true, and hence $k_t \geq 0$, by means of the
second family of operators we compare $k_t$ in Lemmas \ref{Biel-1lm}
with especially constructed functions and thereby prove both
Theorems \ref{1tm} and \ref{2tm}. We begin by constructing auxiliary
semigroups used to get the results of subsection \ref{SS4.2}.

\subsection{Auxiliary semigroups}

For a given $\alpha\in \mathbb{R}$, the space predual to
$\mathcal{K}_\alpha$, defined in (\ref{z300}), is
\begin{equation}
 \label{z3}
\mathcal{G}_\alpha := L^1 (\Gamma_0 , e^{\alpha |\cdot|} d \lambda),
\end{equation}
in which the norm is, cf. (\ref{Feb3}),
\begin{eqnarray}
\label{NoR}
 |G|_{\alpha} & = & \int_{\Gamma_0} |G(\eta)| \exp( \alpha
|\eta|) \lambda (d \eta) \\[.2cm] & = & \sum_{n=0}^\infty \frac{e^{\alpha n}}{n!} \|G^{(n)}\|_{L^1((\mathbb{R}^d)^n)}. \nonumber
\end{eqnarray}
Clearly, $|G|_{\alpha'} \leq |G|_{\alpha}$ for $\alpha' < \alpha$,
which yields
\begin{equation}
 \label{z5}
\mathcal{G}_{\alpha} \hookrightarrow \mathcal{G}_{\alpha'}, \qquad
{\rm for} \ \ \alpha' < \alpha,
\end{equation}
cf. (\ref{z31}). One can show that this embedding is also dense.

Recall that by $m\geq0$ we denote the mortality rate, see
(\ref{R20}). For $b\geq 0$ as in (\ref{theta}) we set
\begin{equation}
  \label{key2}
E_b(\eta) =    (b+m)|\eta| + E^{-} (\eta) = b|\eta| +E(\eta) .
\end{equation}
Here $E^{-}(\eta)$ and $E(\eta)$ are as in (\ref{15A}) and
(\ref{16A}), respectively. For the same $b$, let the action of $A_b$
on functions $G:\Gamma_0 \to \mathbb{R}$ be as follows
\begin{gather}
\label{z1}
A_b = A_{1,b} + A_2  \\[.2cm]
(A_{1,b} G) (\eta) = - E_b(\eta) G(\eta), \nonumber \\[.2cm] (A_2 G) (\eta) =
\int_{\mathbb{R}^d} E^{+} (y,\eta) G (\eta\cup y) dy. \nonumber
\end{gather}
Our aim now is to define $A_b$ as a closed unbounded operator in
$\mathcal{G}_\alpha$ the domain of which contains
$\mathcal{G}_{\alpha'}$ for any $\alpha' > \alpha$. Let
$\mathcal{G}^+_\alpha$ denote the set of all those $G\in
\mathcal{G}_\alpha$ for which $G(\eta)\geq 0$ for $\lambda$-almost
all $\eta\in \Gamma_0$. Set
\begin{equation}
  \label{IW1}
  \mathcal{D}_\alpha = \{ G \in \mathcal{G}_\alpha : E_b(\cdot) G(\cdot) \in
  \mathcal{G}_\alpha\}.
\end{equation}
For each $\alpha'
>\alpha$, $ \mathcal{D}_\alpha$ contains $\mathcal{G}_{\alpha'}$
and hence is dense in $ \mathcal{G}_\alpha$, see (\ref{z5}). Then
the first summand  in $A_b$  turns into a closed and densely defined
operator $(A_{1,b}, \mathcal{D}_\alpha)$ in $\mathcal{G}_\alpha$
such that $- A_{1,b} G \in \mathcal{G}^+_\alpha$ for each $G \in
\mathcal{D}_\alpha^{+}:= \mathcal{D}_\alpha \cap
\mathcal{G}_\alpha^+$. By (\ref{12A}) and (\ref{theta}) one gets
\begin{eqnarray}
 \label{z7}
|A_2 G |_\alpha & \leq & \int_{\Gamma_0} \int_{\mathbb{R}^d} E^{+}(y, \eta) |G(\eta\cup y)|e^{\alpha|\eta|}dy \lambda(d\eta)\\[.2cm]
& = & e^{-\alpha} \int_{\Gamma_0} |G(\eta)| e^{\alpha|\eta|}
\left(\sum_{x\in \eta} E^{+} (x, \eta \setminus x) \right) \lambda (
d \eta)
\nonumber\\[.2cm]
& = & e^{-\alpha} |E^{+} (\cdot) G(\cdot)|_\alpha  \leq
(e^{-\alpha}/\vartheta) |A_{1,b} G|_\alpha   .\nonumber
\end{eqnarray}
Then for $\alpha > - \log \vartheta$, we have that
$e^{-\alpha}/\vartheta < 1$, and hence $A_2$ is $A_{1,b}$-bounded.
This means that $(A_b,\mathcal{D}_\alpha)$ is closed and densely
defined in $\mathcal{G}_\alpha$, see (\ref{z1}).

In the proof of Lemma~\ref{1lm} below we employ the perturbation
theory for positive semigroups of operators in ordered Banach spaces
developed in \cite{TV}. Prior to stating the lemma we present the
relevant fragments of this theory in  spaces of integrable
functions. Let $E$ be a~measurable space with a~$\sigma$-finite
measure $\nu $, and $X:=L^{1}\left( E\rightarrow \mathbb{R},d\nu
\right) $ be the Banach space of $\nu $-integrable real-valued
functions on $X$ with norm $\left\Vert \cdot \right\Vert $. Let
$X^{+}$ be the cone in $X$ consisting of all $\nu $-a.e. nonnegative
functions on $E$. Clearly, $\left\Vert f+g\right\Vert =\left\Vert
f\right\Vert +\left\Vert g\right\Vert $ for any $f,g\in X^{+}$, and
 $X=X^{+}-X^{+}$. Recall that a
$C_0$-semigroup $\{S(t)\}_{t\geq0}$ of bounded linear operators on
$X$ is called \emph{positive} if $S(t)f\in X^+$ for all $f\in X^+$.
A positive semigroup is called \emph{substochastic} (resp.
\emph{stochastic}) if $\|S(t)f\| \leq \|f\|$ (resp. $\|S(t)f\| =
\|f\|$) for all $f\in X^+$. Let $\left( A_{0},D(A_{0})\right) $ be
the generator of a positive $C_{0}$ -semigroup $\{S_{0}\left(
t\right)\}_{t\geq 0}$ on $X$. Set $D^{+}(A_{0})=D(A_{0})\cap X^{+}$.
Then $D(A_{0})$ is dense in $X$, and $D^{+}(A_{0})$ is dense in
$X^{+}$. Let $P:D(A_0)\to X$ be a positive linear operator, i.e.,
$Pf\in X^+$ for all $f\in D^+(A_0)$. The next statement is an
adaptation of Theorem~2.2 in \cite{TV}.
\begin{proposition}
\label{le:substoch} Suppose that for any $f\in D^+(A_0)$, the
following holds
\begin{equation}\label{cond:substoch}
 \int_{E}\bigl( ( A_{0}+P) f\bigr) \left( x\right) \nu \left(d
x\right) \leq 0.
\end{equation}
Then for all $r\in[0,1)$, the operator $\bigl(A_0+rP, D(A_0)\bigr)$
is the generator of a substochastic $C_0$-semigroup in $X$.
\end{proposition}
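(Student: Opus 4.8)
The plan is to realise $A_0+rP$ through a resolvent obtained from the resolvent of $A_0$ by a norm-convergent Neumann series, extracting from (\ref{cond:substoch}) precisely the estimates needed while the positivity of $\{S_0(t)\}_{t\ge0}$ and the $L^1$-structure of $X$ supply the rest; generation will then come from the Hille--Yosida theorem.

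First I would record that $\{S_0(t)\}_{t\ge0}$ is itself substochastic. Since $Pf\ge0$ for $f\in D^+(A_0)$, condition (\ref{cond:substoch}) already forces $\int_E (A_0 f)\,d\nu\le0$ on $D^+(A_0)$; and because the orbit $S_0(t)f$ stays in $D(A_0)$ (invariance of the domain) and in $X^+$ (positivity of $S_0$), one gets $\tfrac{d}{dt}\|S_0(t)f\|=\int_E (A_0 S_0(t)f)\,d\nu\le0$, hence $\|S_0(t)f\|\le\|f\|$ on $D^+(A_0)$, then on $X^+$ by density, and on $X$ via $g=g_+-g_-$ with $\|g_+\|+\|g_-\|=\|g\|$. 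So $S_0$ is a contraction semigroup with non-positive growth bound, and $(\lambda-A_0)^{-1}=\int_0^\infty e^{-\lambda t}S_0(t)\,dt$ exists and is a positive bounded operator for every $\lambda>0$.

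The central step is the bound $\|rP(\lambda-A_0)^{-1}\|\le r<1$. Given $g\in X^+$ and $\lambda>0$, put $f:=(\lambda-A_0)^{-1}g\in D^+(A_0)$, so $A_0 f=\lambda f-g$; substituting into (\ref{cond:substoch}) and using $f,Pf\ge0$ yields $\|Pf\|=\int_E Pf\,d\nu\le\|g\|-\lambda\|f\|\le\|g\|$. Hence $\|P(\lambda-A_0)^{-1}\|\le1$ (first on $X^+$, then on $X$ by the cone decomposition), and $P(\lambda-A_0)^{-1}$ is positive. Consequently $I-rP(\lambda-A_0)^{-1}$ is boundedly invertible with positive inverse $\sum_{n\ge0}\bigl(rP(\lambda-A_0)^{-1}\bigr)^n$, and I would set $R_\lambda:=(\lambda-A_0)^{-1}\bigl(I-rP(\lambda-A_0)^{-1}\bigr)^{-1}$, a positive operator from $X$ into $D(A_0)$; a routine check shows $\lambda-A_0-rP$ is a bijection of $D(A_0)$ onto $X$ with inverse $R_\lambda$, so $\lambda\in\rho(A_0+rP)$ and $(A_0+rP,D(A_0))$ is closed and densely defined. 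To finish, for $g\in X^+$ and $h:=R_\lambda g\in D^+(A_0)$ one computes, from $(\lambda-A_0-rP)h=g$, that $\lambda\|h\|=\|g\|+\int_E(A_0+P)h\,d\nu-(1-r)\int_E Ph\,d\nu\le\|g\|$, which, extended off $X^+$ as before, gives $\|(\lambda-A_0-rP)^{-n}\|\le\lambda^{-n}$ for all $\lambda>0$, $n\ge1$; Hille--Yosida then yields a contraction $C_0$-semigroup $\{S_r(t)\}_{t\ge0}$, positive because its resolvents are (Euler exponential formula), and substochastic because for $f\in D^+(A_0)$ one has $\tfrac{d}{dt}\|S_r(t)f\|=\int_E(A_0+P)S_r(t)f\,d\nu-(1-r)\int_E PS_r(t)f\,d\nu\le0$.

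The main obstacle is the norm estimate in the central step: (\ref{cond:substoch}) controls only the signed $\nu$-mass of $(A_0+P)f$, not its $L^1$-norm, and the passage from mass to norm works only because on the positive cone the integral coincides with the norm and because in $L^1$ the norm is additive over positive and negative parts — so the argument is intrinsically tied to the $L^1$ setting. A lesser but genuine point is that one must secure the contractivity of $S_0$ first; otherwise $(\lambda-A_0)^{-1}$ and all subsequent estimates would be available only for large $\lambda$, which would not force a \emph{contraction} (hence substochastic) perturbed semigroup.
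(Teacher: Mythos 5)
Your argument is correct, but it is worth noting that the paper does not prove this proposition at all: it is quoted as an adaptation of Theorem 2.2 of Thieme and Voigt \cite{TV}, so the ``paper's proof'' is an appeal to their perturbation--approximation machinery. What you have written is a self-contained proof along the classical Kato--Voigt lines, and it is sound at every step: substochasticity of $S_0$ from (\ref{cond:substoch}) restricted to $A_0$ (using positivity of the orbit, invariance of $D(A_0)$, and the fact that on $X^+$ the $L^1$-norm is the integral); the key estimate $\|P(\lambda-A_0)^{-1}\|\le 1$ obtained by substituting $A_0f=\lambda f-g$ into (\ref{cond:substoch}); the Neumann series for $\bigl(I-rP(\lambda-A_0)^{-1}\bigr)^{-1}$, which converges precisely because $r<1$, giving a positive resolvent $R_\lambda$ of $A_0+rP$ on all of $\lambda>0$; and the bound $\lambda\|R_\lambda g\|\le\|g\|$ on the cone, extended to $X$ by additivity of the $L^1$-norm over positive and negative parts, which feeds Hille--Yosida and yields a positive contraction semigroup (positivity via the exponential formula). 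Your closing differential inequality for $\|S_r(t)f\|$ is redundant --- positivity plus contractivity already gives substochasticity in the paper's sense --- but harmless. The comparison, then: your route is more elementary and fully explicit for the regime $r\in[0,1)$ actually used in Lemma \ref{1lm}, exploiting exactly the strict inequality $r<1$ to avoid any limiting construction; the cited Thieme--Voigt theorem buys more (it is designed to handle the critical case and honesty/stochasticity questions via approximation), which the paper does not need here. One small point of care you handled correctly but implicitly: the cone decomposition is applied to elements of $X$ before the resolvent is applied, never to elements of $D(A_0)$ (whose positive parts need not lie in the domain), and the density of $D^+(A_0)$ in $X^+$ that you use is part of the standing setup stated before the proposition.
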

\begin{lemma}\label{1lm}
For each $\alpha > - \log \vartheta$, the operator $(A_b,
\mathcal{D}_\alpha)$ is the generator of a substochastic semigroup
$\{S(t)\}_{t\geq0}$ in $\mathcal{G}_{\alpha}$.
\end{lemma}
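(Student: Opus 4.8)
The plan is to apply Proposition~\ref{le:substoch} with the identifications $X = \mathcal{G}_\alpha$, $X^+ = \mathcal{G}_\alpha^+$, $(A_0, D(A_0)) = (A_{1,b}, \mathcal{D}_\alpha)$, and $P = A_2$. First I would check that $A_{1,b}$ generates a positive $C_0$-semigroup on $\mathcal{G}_\alpha$: this is the multiplication operator $G \mapsto -E_b(\cdot)G(\cdot)$ with $E_b(\eta) = (b+m)|\eta| + E^-(\eta) \geq 0$, so it generates the semigroup $(S_0(t)G)(\eta) = e^{-t E_b(\eta)} G(\eta)$, which is manifestly positive and in fact substochastic, with domain precisely $\mathcal{D}_\alpha$ as defined in (\ref{IW1}); the density of $\mathcal{D}_\alpha$ and of $\mathcal{D}_\alpha^+$ in the respective spaces has already been observed in the text via the inclusion $\mathcal{G}_{\alpha'} \subset \mathcal{D}_\alpha$ for $\alpha' > \alpha$. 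Next I would note that $A_2$ is positive: from (\ref{z1}), $(A_2 G)(\eta) = \int_{\mathbb{R}^d} E^+(y,\eta) G(\eta \cup y)\,dy$ with $E^+(y,\eta) \geq 0$, so $A_2 G \in \mathcal{G}_\alpha^+$ whenever $G \in \mathcal{D}_\alpha^+$, and $A_2$ is defined on all of $\mathcal{D}_\alpha$ by the estimate (\ref{z7}).

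The core computation is verifying the dissipativity-type condition (\ref{cond:substoch}), namely that for every $G \in \mathcal{D}_\alpha^+$ one has $\int_{\Gamma_0} ((A_{1,b} + A_2)G)(\eta)\, e^{\alpha|\eta|}\,\lambda(d\eta) \leq 0$. For such a nonnegative $G$ this integral equals $|A_2 G|_\alpha - |E_b(\cdot)G(\cdot)|_\alpha$ up to the sign bookkeeping, and by the chain of equalities already carried out in (\ref{z7}) (which uses the integration rule (\ref{12A})) we have $|A_2 G|_\alpha = e^{-\alpha}|E^+(\cdot)G(\cdot)|_\alpha$. Hence the condition reduces to $e^{-\alpha} \int_{\Gamma_0} E^+(\eta) G(\eta) e^{\alpha|\eta|}\lambda(d\eta) \leq \int_{\Gamma_0} E_b(\eta) G(\eta) e^{\alpha|\eta|}\lambda(d\eta)$. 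Since $G \geq 0$, it suffices to have the pointwise bound $e^{-\alpha} E^+(\eta) \leq E_b(\eta)$ for all $\eta$. But $E_b(\eta) = b|\eta| + m|\eta| + E^-(\eta) \geq b|\eta| + E^-(\eta) \geq \vartheta E^+(\eta)$ by the $(b,\vartheta)$-assumption (\ref{theta}), so the desired inequality holds as soon as $e^{-\alpha} \leq \vartheta$, i.e. $\alpha > -\log\vartheta$ — exactly the hypothesis of the lemma.

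With (\ref{cond:substoch}) verified, Proposition~\ref{le:substoch} immediately yields that $(A_{1,b} + r A_2, \mathcal{D}_\alpha)$ generates a substochastic $C_0$-semigroup for every $r \in [0,1)$. To reach $r = 1$, i.e. the full operator $A_b = A_{1,b} + A_2$, I would use the strict inequality already available: the estimate (\ref{z7}) gives $|A_2 G|_\alpha \leq (e^{-\alpha}/\vartheta)|A_{1,b}G|_\alpha$ with relative bound $e^{-\alpha}/\vartheta < 1$ strictly, so $A_2$ is a genuine relatively bounded perturbation of $A_{1,b}$ with bound $<1$; one then invokes the standard Miyadera--Voigt or bounded-perturbation argument for positive semigroups (as in \cite{TV}) — taking $r \uparrow 1$ in Proposition~\ref{le:substoch} together with the relative bound being strictly below $1$ — to conclude that $(A_b, \mathcal{D}_\alpha)$ itself generates a substochastic $C_0$-semigroup $\{S(t)\}_{t\geq 0}$ on $\mathcal{G}_\alpha$. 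The main obstacle is the passage from $r<1$ to $r=1$: one must confirm that the perturbation framework of \cite{TV} actually delivers the endpoint case under the strict relative bound $e^{-\alpha}/\vartheta<1$ (rather than only the open interval), which is where the precise form of Theorem~2.2 in \cite{TV} and its corollaries has to be used carefully; the positivity and the dissipativity estimate themselves are routine given (\ref{theta}).
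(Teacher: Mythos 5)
Your verification of (\ref{cond:substoch}) with $A_0=A_{1,b}$ and $P=A_2$ is essentially the same computation as in the paper and is correct: by (\ref{z7}) and (\ref{theta}) one has $e^{-\alpha}E^{+}(\eta)\leq (e^{-\alpha}/\vartheta)\,E_b(\eta)\leq E_b(\eta)$ for $\alpha>-\log\vartheta$. The genuine gap is exactly the step you yourself flag as ``the main obstacle'': Proposition~\ref{le:substoch} only gives generation for $A_{1,b}+rA_2$ with $r\in[0,1)$, and your passage to $r=1$ is not an argument. ``Taking $r\uparrow 1$'' does not do the job: in the Kato--Voigt--Thieme perturbation theory the semigroups generated by $A_0+rP$ do converge as $r\uparrow1$ to a substochastic semigroup, but its generator is in general only an \emph{extension} of $(A_0+P,D(A_0))$, possibly a proper one (this is precisely the honesty problem for substochastic semigroups), so nothing about the interior values $r<1$ by itself identifies $(A_b,\mathcal{D}_\alpha)$ as a generator. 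The strict relative bound $e^{-\alpha}/\vartheta<1$ from (\ref{z7}) does give closedness of $(A_b,\mathcal{D}_\alpha)$ and could be upgraded to a generation proof via a resolvent/Neumann-series argument, but you do not carry this out; as written, the endpoint case rests on an unspecified ``careful use'' of \cite{TV}.

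The paper closes this gap with a one-line rescaling trick that your proposal misses: it applies Proposition~\ref{le:substoch} not to $P=A_2$ but to the inflated perturbation $P=r^{-1}A_2$, with $r$ fixed in $(e^{-\alpha}/\vartheta,\,1)$. Because your verification of (\ref{cond:substoch}) has slack (the ratio $e^{-\alpha}/\vartheta$ is strictly below $1$), the condition still holds for $A_{1,b}+r^{-1}A_2$, namely $-E_b(\eta)+r^{-1}e^{-\alpha}E^{+}(\eta)\leq 0$ for all $\eta$. The proposition then directly asserts that $A_{1,b}+rP=A_{1,b}+A_2=A_b$, with domain $\mathcal{D}_\alpha$, generates a substochastic semigroup, since this corresponds to an interior value $r<1$ of the parameter; no limiting or Miyadera--Voigt argument is needed. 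To repair your proof, either insert this rescaling or replace the $r\uparrow1$ step by an explicit argument (e.g.\ surjectivity of $\lambda-A_b$ for large $\lambda$ using $\|A_2R(\lambda,A_{1,b})\|\leq e^{-\alpha}/\vartheta+o(1)<1$) — as it stands, the conclusion for $r=1$ is asserted rather than proved.
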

\begin{proof}
We apply Proposition~\ref{le:substoch} with $E=\Gamma_0$,
$X=\mathcal{G}_\alpha$ as in (\ref{z3}), and $A_0=A_{1,b}$. For
$r>0$ and $A_2$ as in (\ref{z1}), we set $P=r^{-1}A_2$.  For such
$A_0$ and $P$, and for $G \in \mathcal{D}_\alpha^+$, the left-hand
side of (\ref{cond:substoch}) takes the form, cf. (\ref{z7}),
\begin{eqnarray*}
&& - \int_{\Gamma_0} E_b(\eta) G(\eta) \exp( \alpha |\eta|) \lambda (d \eta) \\[.2cm]
&& + r^{-1} \int_{\Gamma_0} \int_{\mathbb{R}^d} E^{+}(y, \eta) G(\eta\cup y) \exp(\alpha|\eta|)dy \lambda(d\eta)\nonumber \\[.2cm]
& = & \int_{\Gamma_0} \bigl( - E_b(\eta) + r^{-1} e^{-\alpha} E^{+}
(\eta)\bigr) G(\eta) \exp(\alpha|\eta|) \lambda (d\eta).\nonumber
\end{eqnarray*}
For a fixed $\alpha > - \log \vartheta$, pick $r\in (0,1)$ such that
$r^{-1} (e^{-\alpha}/\vartheta )<1$. Then, for such $\alpha$ and
$r$, we have
\begin{equation}
\label{Qq15}
 \int_{\Gamma_0} \bigl( - E_b(\eta) + r^{-1} e^{-\alpha} E^{+} (\eta)\bigr) G(\eta) \exp(\alpha|\eta|) \lambda (d\eta) \leq 0,
\end{equation}
which holds in view of (\ref{theta}). Since $r^{-1} A_2$ is a
positive operator, by Proposition~\ref{le:substoch} we have that
$A_b= A_{1,b} + A_2 = A_{1,b} + r(r^{-1} A_2)$ generates a
substochastic semigroup $\{S(t)\}_{t\geq0}$ in
$\mathcal{G}_{\alpha}$.
\end{proof}
\vskip.1cm Now we turn to constructing the
semigroup  `sun-dual' to that mentioned in Lemma \ref{1lm}. Let $A^*_b$ be the
adjoint of $(A_b, \mathcal{D}_\alpha)$ in $\mathcal{K}_\alpha$ with
domain, cf. (\ref{Iw}),
\begin{equation*}
{\rm Dom}(A^*_b) =\bigl\{k\in \mathcal{K}_\alpha: \exists
\tilde{k}\in \mathcal{K}_\alpha \ \ \forall G\in \mathcal{D}_\alpha
\ \ \langle\! \langle A_b G , k\rangle \!\rangle = \langle\! \langle
G, \tilde{k}\rangle\!\rangle\bigr\}.
\end{equation*}
For each $k\in {\rm Dom}(A^*_b)$, the action of $A^*_b$ on $k$ is
described in (\ref{22A}) with $E$ replaced by $E_b$, see
(\ref{key2}). By (\ref{ACmar}) we then get $\mathcal{K}_{\alpha'}
\subset {\rm Dom}(A^*_b)$ for each $\alpha' < \alpha$. Let
$\mathcal{Q}_\alpha$ stand for the closure of ${\rm Dom}(A^*_b)$ in
$\|\cdot\|_\alpha$. Then
\begin{equation}
 \label{z33}
\mathcal{Q}_\alpha:= \overline{{\rm Dom}(A^*_b)}\supset {\rm
Dom}(A^*_b) \supset
 \mathcal{K}_{\alpha'}, \qquad {\rm for} \ {\rm any} \ \alpha'<\alpha.
\end{equation}
Note that $\mathcal{Q}_\alpha$ is a proper subset of
$\mathcal{K}_\alpha$. For each $t\geq 0$, the adjoint $S^*(t)$ of
$S(t)$ is a bounded operator in $\mathcal{K}_\alpha$. However, the
semigroup $\{S^*(t)\}_{t\geq 0}$ is not strongly continuous. For
$t>0$, let $ S^{\odot}_\alpha(t) $ denote the restriction of
$S^*(t)$ to $\mathcal{Q}_\alpha$. Since $\{S(t)\}_{t\geq 0}$ is the
semigroup of contractions, for $k\in \mathcal{Q}_\alpha$ and all
$t\geq 0$, we have that
\begin{equation}
 \label{ACa}
\|S^{\odot}_\alpha(t)k\|_\alpha = \|S^{*}(t)k\|_\alpha \leq
\|k\|_\alpha.
\end{equation}
\begin{proposition}
 \label{1pn}
For every $\alpha' <\alpha$ and any $k\in \mathcal{K}_{\alpha'}$,
the map
\begin{equation*}
[0, +\infty) \ni t \mapsto S^{\odot}_\alpha(t) k \in
\mathcal{K}_\alpha
\end{equation*}
is continuous.
\end{proposition}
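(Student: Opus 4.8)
The plan is to exploit the well-known ``sun-dual'' structure: although $\{S^*(t)\}_{t\geq 0}$ fails to be strongly continuous on all of $\mathcal{K}_\alpha$, its restriction to the subspace $\mathcal{Q}_\alpha = \overline{{\rm Dom}(A^*_b)}$ is a $C_0$-semigroup of contractions there, by the general Phillips theory of adjoint semigroups. This gives continuity of $t \mapsto S^\odot_\alpha(t) k$ for every $k \in \mathcal{Q}_\alpha$, hence in particular for $k \in \mathcal{K}_{\alpha'}$ with $\alpha' < \alpha$, since $\mathcal{K}_{\alpha'} \subset {\rm Dom}(A^*_b) \subset \mathcal{Q}_\alpha$ by (\ref{z33}). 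So at the coarsest level the statement is almost immediate from the abstract theory. However, the proposition is stated with the target space $\mathcal{K}_\alpha$, not $\mathcal{Q}_\alpha$, and continuity is a property of the curve regardless of which (continuously embedded) ambient space we record it in; so one should first observe $\mathcal{Q}_\alpha \hookrightarrow \mathcal{K}_\alpha$ and reduce to showing $t \mapsto S^\odot_\alpha(t)k$ is $\|\cdot\|_\alpha$-continuous as a curve in $\mathcal{Q}_\alpha$.

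First I would recall the abstract fact (Phillips): if $\{S(t)\}_{t\geq 0}$ is a $C_0$-semigroup on a Banach space $\mathcal{G}$ with generator $A$, then $\mathcal{Q} := \overline{{\rm Dom}(A^*)}$ (closure in the dual norm) is invariant under each $S^*(t)$, and the restrictions $S^\odot(t) := S^*(t)|_{\mathcal{Q}}$ form a $C_0$-semigroup on $\mathcal{Q}$ whose generator is the part of $A^*$ in $\mathcal{Q}$. Applying this with $\mathcal{G} = \mathcal{G}_\alpha$, $A = A_b$ (Lemma \ref{1lm}), and using the definition $\mathcal{Q}_\alpha = \overline{{\rm Dom}(A_b^*)}$, we obtain that $\{S^\odot_\alpha(t)\}_{t\geq 0}$ is strongly continuous on $\mathcal{Q}_\alpha$, i.e. $t \mapsto S^\odot_\alpha(t) k$ is norm-continuous for every $k \in \mathcal{Q}_\alpha$. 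Since $\mathcal{K}_{\alpha'} \subset {\rm Dom}(A_b^*) \subset \mathcal{Q}_\alpha$ by (\ref{z33}), the claim follows for all $k \in \mathcal{K}_{\alpha'}$; the contraction bound (\ref{ACa}) gives uniform boundedness en route. The embedding $\mathcal{Q}_\alpha \hookrightarrow \mathcal{K}_\alpha$ is trivial since $\mathcal{Q}_\alpha$ is a norm-closed subspace of $\mathcal{K}_\alpha$, so $\|\cdot\|_\alpha$-continuity in $\mathcal{Q}_\alpha$ is the same as continuity of the curve viewed in $\mathcal{K}_\alpha$.

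If one prefers a more self-contained argument avoiding a black-box citation, I would instead establish right-continuity at $t=0$ directly and bootstrap. For $k \in \mathcal{K}_{\alpha'}$ and $G \in \mathcal{G}_\alpha$ one has $\langle\!\langle G, S^\odot_\alpha(t)k - k\rangle\!\rangle = \langle\!\langle S(t)G - G, k\rangle\!\rangle$, which tends to $0$ as $t \to 0^+$ by strong continuity of $\{S(t)\}_{t \geq 0}$ on $\mathcal{G}_\alpha$; this gives weak-$*$ continuity for free. To upgrade to norm continuity one uses that $k \in {\rm Dom}(A_b^*)$: then $S^\odot_\alpha(t)k - k = \int_0^t S^\odot_\alpha(s) A_b^* k\, ds$ as a Bochner integral in $\mathcal{Q}_\alpha$ (the integrand being weak-$*$ continuous and norm-bounded by (\ref{ACa}), hence strongly measurable since $\mathcal{Q}_\alpha$ is the closure of a separable-enough set — more cleanly, one integrates the resolvent identity), so $\|S^\odot_\alpha(t)k - k\|_\alpha \leq t\, \|A_b^* k\|_\alpha \to 0$. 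Right-continuity at general $t_0 \geq 0$ then follows from the semigroup property and (\ref{ACa}), and left-continuity from $\|S^\odot_\alpha(t_0)k - S^\odot_\alpha(t)k\|_\alpha = \|S^\odot_\alpha(t)(S^\odot_\alpha(t_0-t)k - k)\|_\alpha \leq \|S^\odot_\alpha(t_0 - t)k - k\|_\alpha$ for $t < t_0$.

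The main obstacle is the one subtle point flagged above: $\{S^*(t)\}$ is \emph{not} strongly continuous on $\mathcal{K}_\alpha$, so one cannot simply quote continuity of an adjoint semigroup — the whole content is that $\mathcal{Q}_\alpha$ is exactly the right maximal subspace on which strong continuity is recovered, and that $\mathcal{K}_{\alpha'}$ sits inside it. Establishing the Bochner-integral representation $S^\odot_\alpha(t)k - k = \int_0^t S^\odot_\alpha(s)A_b^*k\,ds$ rigorously (measurability of the integrand in a possibly non-separable space) is the only place requiring care; this is circumvented entirely by invoking the Phillips adjoint-semigroup theorem, which is the route I would take in the paper.
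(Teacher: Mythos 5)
Your argument is essentially the paper's own proof: the paper also invokes the Phillips adjoint (sun-dual) semigroup theorem, cited as \cite[Theorem 10.4, page 39]{Pazy}, to conclude that $\{S^{\odot}_\alpha(t)\}_{t\geq 0}$ is a $C_0$-semigroup on $\mathcal{Q}_\alpha$ with generator the part of $A^*_b$ in $\mathcal{Q}_\alpha$, and then deduces the continuity from the $C_0$-property together with the inclusion $\mathcal{K}_{\alpha'}\subset\mathcal{Q}_\alpha$ of (\ref{z33}). Your additional self-contained bootstrap via the Bochner integral is a correct alternative, but the main route you propose coincides with the paper's.
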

\begin{proof}
By \cite[Theorem 10.4, page 39]{Pazy}, the collection
$\{S^{\odot}_\alpha(t)\}_{t\geq 0}$ constitutes a $C_0$-semigroup on
$\mathcal{Q}_\alpha$ the generator of which, $A^{\odot}_\alpha$, is
the part of $A^*_b$ in $\mathcal{Q}_\alpha$. That is,
$A^{\odot}_\alpha$ is the restriction of $A^*_b$ to the set
\begin{equation*}
 %\label{z34a}
{\rm Dom} (A^{\odot}_\alpha):= \{ k\in {\rm Dom}(A^*_b): A^*_b k \in
\mathcal{Q}_\alpha\},
\end{equation*}
cf. \cite[Definition 10.3, page 39]{Pazy}. The continuity in
question follows by the $C_0$-property of the semigroup
$\{S^{\odot}_\alpha(t)\}_{t\geq 0}$ and (\ref{z33}).
\end{proof}
\vskip.1cm By (\ref{ACmar}) it follows that
\begin{equation}
 \label{AE}
{\rm Dom} (A^{\odot}_{\alpha''}) \supset \mathcal{K}_{\alpha'},
\qquad \alpha' < \alpha'',
\end{equation}
and hence, see  \cite[Theorem 2.4, page 4]{Pazy},
\begin{equation}
  \label{AEa}
S^{\odot}_{\alpha''} (t) k \in {\rm Dom} (A^{\odot}_{\alpha''}),
\end{equation}
and
\begin{equation}
  \label{AEmar}
\frac{d}{dt} S^{\odot}_{\alpha''} (t) k = A^{\odot}_{\alpha''}
S^{\odot}_{\alpha''} (t) k,
\end{equation}
which holds for all $\alpha'' \in (\alpha', \alpha]$ and $k\in
\mathcal{K}_{\alpha'}$.

\subsection{The main operators}
\label{SS4.2}

For $E_b$ as in (\ref{key2}), we set
\begin{eqnarray}
  \label{key3}
A_b^\Delta & = & A^\Delta_{1,b} + A^\Delta_2, \\[.2cm]
(A^\Delta_{1,b}k)(\eta) & = & - E_b (\eta) k(\eta), \nonumber
\end{eqnarray}
and $A^\Delta_2$ being as in (\ref{22A}). We also set
\begin{eqnarray}
  \label{key4}
B_b^\Delta & = & B^\Delta_{1} + B^\Delta_{2,b}, \\[.2cm]
(B^\Delta_{2,b}k)(\eta)  & = & (B^\Delta_2 k)(\eta) + b|\eta|k(\eta)
. \nonumber
\end{eqnarray}
Here $B^\Delta_{1}$ and $B^\Delta_{2}$ are as in (\ref{23A}). Note
that
\begin{equation}
  \label{key5}
L^\Delta= A^\Delta + B^\Delta = A^\Delta_b + B^\Delta_b.
\end{equation}
The expressions in (\ref{key3}) and (\ref{key4}) can be used to
define the corresponding continuous operators acting from
$\mathcal{K}_{\alpha'}$ to $\mathcal{K}_{\alpha}$, $\alpha'<\alpha$,
cf. (\ref{sta3}), and hence the elements of
$\mathcal{L}(\mathcal{K}_{\alpha'}, \mathcal{K}_{\alpha})$ the norms
of which are estimated by means of the analogies of (\ref{ACmar})
and (\ref{Iw}). For these operators, we use notations
$(B^\Delta_b)_{\alpha \alpha'}$ and $(B^\Delta_{2,b})_{\alpha
\alpha'}$. Then $\|(B^\Delta_b)_{\alpha \alpha'}\|$ will stand for
the operator norm, and thus (\ref{Iw}) can be rewritten in the form
\begin{equation}
\label{bed4} \|(B^\Delta_b)_{\alpha \alpha'} \| \leq \frac{\langle
a^{+} \rangle +  b + \langle a^{-} \rangle
e^{\alpha'}}{e(\alpha-\alpha')}.
\end{equation}
For fixed $\alpha > \alpha' > - \log \vartheta$,  we construct
continuous operators $Q_{\alpha\alpha'} (t;\mathbb{B}):
\mathcal{K}_{\alpha'} \to \mathcal{K}_{\alpha}$, $t>0$, which will
be used to obtain the solution $k_t$ as in Theorem \ref{1tm} and to
study its properties. Here  $\mathbb{B}$ will be taken in the
following two versions:  (a) $\mathbb{B}= B^\Delta_b$; (b)
$\mathbb{B}= B^\Delta_{2,b}$, see (\ref{key4}). In both cases, for
each $\alpha_1, \alpha_2 \in [\alpha', \alpha]$ such that $\alpha_1
< \alpha_2$, cf. (\ref{bed4}), the following holds
\begin{equation}
  \label{IwN}
 \| \mathbb{B}_{\alpha_2\alpha_1} \| \leq
 \frac{\beta(\alpha_2;\mathbb{B})}{e(\alpha_2 - \alpha_1)},
\end{equation}
with
\begin{eqnarray}
  \label{IwN1}
& & \beta(\alpha_2; B^\Delta_b)  =  \langle a^{+} \rangle +
 b + \langle a^{-} \rangle e^{\alpha_2}, \\[.2cm]
& & \quad \beta(\alpha_2; B^\Delta_{2,b})  =  \langle a^{+} \rangle
+ b. \nonumber
\end{eqnarray}
For $t>0$ and  $\alpha_1$, $\alpha_2$ as above, let
$\Sigma_{\alpha_2\alpha_1}(t):\mathcal{K}_{\alpha_1}\to
\mathcal{K}_{\alpha_2}$ be the restriction of $S^{\odot}_{\alpha_2}(t)$ to $\mathcal{K}_{\alpha_1}$, cf.
(\ref{AE}) and (\ref{AEa}).
Note that the embedding
$\mathcal{K}_{\alpha_1}\hookrightarrow \mathcal{K}_{\alpha_2}$.
can be written as
$\Sigma_{\alpha_2\alpha_1}(0)$, and hence
\begin{equation}
  \label{aKre}
\Sigma_{\alpha_2\alpha_1}(t) = \Sigma_{\alpha_2\alpha_1}(0)
S^{\odot}_{\alpha_1}(t).
\end{equation}
Also, for each $\alpha_3 > \alpha_2$, we have
\begin{equation}
  \label{norma1}
\Sigma_{\alpha_3\alpha_1}(t) = \Sigma_{\alpha_3\alpha_2}(0)
\Sigma_{\alpha_2\alpha_1}(t):= \Sigma_{\alpha_2\alpha_1}(t), \qquad
t\geq 0.
\end{equation}
Here and in the sequel, we omit writing embedding operators if no
confusing arises. In view of (\ref{ACa}), it follows that
\begin{equation}
  \label{norma}
\|\Sigma_{\alpha_2\alpha_1}(t)\| \leq 1.
\end{equation}
\begin{remark}
 \label{Nahark}
By Lemma \ref{1lm} we have that
\begin{equation*}
  %\label{Naha1}
\forall k\in \mathcal{K}_{\alpha_{1}}^{+} \qquad
\Sigma_{\alpha_2\alpha_1}(t)k \in \mathcal{K}_{\alpha_{2}}^{+}, \ \
t\geq  0,
\end{equation*}
see (\ref{Naha}). Also $(B^\Delta_{2,b})_{\alpha_2\alpha_1}$, but
not $(B^\Delta_b)_{\alpha_2\alpha_1}$, has the same positivity
property.
\end{remark}
Set, cf. (\ref{IwN1}),
\begin{equation}
  \label{akko}
T(\alpha_2, \alpha_1;\mathbb{B}) = \frac{\alpha_2 -
\alpha_1}{\beta(\alpha_2;\mathbb{B})}, \qquad \alpha_2 > \alpha_1 ,
\end{equation}
and then
\begin{equation}
 \label{aKK}
\mathcal{A}(\mathbb{B}) = \{(\alpha_1, \alpha_2, t): - \log
\vartheta < \alpha_1 < \alpha_2, \ \ 0\leq t <   T(\alpha_2,
\alpha_1;\mathbb{B}) \}.
\end{equation}
\begin{lemma}
  \label{IN1lm}
For each of the two choices of $\mathbb{B}$, see (\ref{IwN1}), there
exists the corresponding family of linear maps, $\{Q_{\alpha_2
\alpha_1}(t;\mathbb{B}): (\alpha_1, \alpha_2, t)\in
\mathcal{A}(\mathbb{B})\}$, each element of which  has the following
properties:
\begin{itemize}
  \item[{\it (i)}] $Q_{\alpha_2
  \alpha_1}(t;\mathbb{B})\in \mathcal{L}(\mathcal{K}_{\alpha_1} , \mathcal{K}_{\alpha_2})$;
  \item[{\it (ii)}] the map $[0, T(\alpha_2, \alpha_1; \mathbb{B})) \ni t \mapsto  Q_{\alpha_2
  \alpha_1}(t;\mathbb{B}) \in \mathcal{L}(\mathcal{K}_{\alpha_1}, \mathcal{K}_{\alpha_2})
  $ is continuous;
\item[{\it (iii)}] the operator norm of $Q_{\alpha_2
  \alpha_1}(t;\mathbb{B})\in \mathcal{L}(\mathcal{K}_{\alpha_1},
\mathcal{K}_{\alpha_2})$ satisfies
\begin{equation}
  \label{Herf}
  \|Q_{\alpha_2
  \alpha_1}(t;\mathbb{B})\| \leq \frac{T(\alpha_2, \alpha_1;\mathbb{B})}{T(\alpha_2, \alpha_1;\mathbb{B})
  -t};
\end{equation}
\item[{\it (iv)}] for each $\alpha_3 \in (\alpha_1 , \alpha_2)$ and $t< T(\alpha_3, \alpha_1;\mathbb{B})$,
the following holds
\end{itemize}
\begin{equation}
  \label{bed7}
\frac{d}{dt} Q_{\alpha_2
  \alpha_1}(t;\mathbb{B}) = \bigl( \left(A^\Delta_b\right)_{\alpha_2\alpha_3} + \mathbb{B}_{\alpha_2\alpha_3}
  \bigr) Q_{\alpha_3
  \alpha_1}(t;\mathbb{B}).
\end{equation}
\end{lemma}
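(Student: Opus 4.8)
The plan is to construct $Q_{\alpha_2\alpha_1}(t;\mathbb{B})$ as the sum of an operator-valued series built by iterating the sun-dual semigroups $\Sigma$ against the bounded perturbation $\mathbb{B}$, in the spirit of the Picard/Dyson expansion used for such Ovsyannikov-type problems in \cite{FKKK}. Concretely, for $(\alpha_1,\alpha_2,t)\in\mathcal{A}(\mathbb{B})$ I would set
\begin{equation}
\label{QseriesPlan}
Q_{\alpha_2\alpha_1}(t;\mathbb{B}) = \Sigma_{\alpha_2\alpha_1}(t) + \sum_{n=1}^\infty Q^{(n)}_{\alpha_2\alpha_1}(t;\mathbb{B}),
\end{equation}
where each $Q^{(n)}$ is the $n$-fold time-ordered integral
\begin{equation}
\label{QnPlan}
Q^{(n)}_{\alpha_2\alpha_1}(t;\mathbb{B}) = \int_0^t \! \int_0^{t_1}\!\!\cdots\!\int_0^{t_{n-1}} \Sigma_{\alpha_2\beta_n}(t-t_1)\,\mathbb{B}_{\beta_n\beta_n'}\,\Sigma_{\beta_n'\beta_{n-1}}(t_1-t_2)\cdots\mathbb{B}_{\beta_1\beta_1'}\,\Sigma_{\beta_1'\alpha_1}(t_n)\, dt_n\cdots dt_1,
\end{equation}
with an auxiliary partition of the interval $(\alpha_1,\alpha_2)$ into $2n$ intermediate indices. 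The standard device (cf. the Ovsyannikov method) is to choose, for the $n$-th term, a uniform subdivision of the gap $\alpha_2-\alpha_1$ into $2n$ equal pieces, so each individual $\mathbb{B}$-factor loses $(\alpha_2-\alpha_1)/(2n)$ of the index, and each $\Sigma$-factor is a contraction by \eqref{norma}.

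The key estimate then comes from combining \eqref{IwN} applied at each small index gap with \eqref{norma}: the operator norm of the $n$-th term is bounded by $\bigl(\beta(\alpha_2;\mathbb{B})\,2n/(e(\alpha_2-\alpha_1))\bigr)^n$ times the volume of the time simplex $t^n/n!$; using $n^n/n! \le e^n$ one obtains a bound of the form $\bigl(t/T(\alpha_2,\alpha_1;\mathbb{B})\bigr)^n$ up to constants, which is summable precisely when $t < T(\alpha_2,\alpha_1;\mathbb{B})$, and summing the geometric-type series yields \eqref{Herf}. This simultaneously gives \textit{(i)} and \textit{(iii)}. Continuity in $t$, item \textit{(ii)}, follows because each $Q^{(n)}$ is manifestly continuous (the integrand is continuous in all time variables by Proposition \ref{1pn}, and the index gaps are strictly positive so all $\Sigma$-factors are genuine sun-dual maps between strictly ordered spaces), and the series converges uniformly on compact subsets of $[0,T(\alpha_2,\alpha_1;\mathbb{B}))$ by the same geometric bound. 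The differential identity \textit{(iv)} is obtained by differentiating \eqref{QseriesPlan} term by term: the derivative of the leading term $\Sigma_{\alpha_2\alpha_1}(t)$ is $\left(A^\Delta_b\right)_{\alpha_2\alpha_3}\Sigma_{\alpha_3\alpha_1}(t)$ by \eqref{AEmar}, while differentiating the outer time variable in each $Q^{(n)}$ produces one term where the upper limit is hit — contributing $\mathbb{B}_{\alpha_2\alpha_3}Q^{(n-1)}_{\alpha_3\alpha_1}(t;\mathbb{B})$ — plus one term where the $\Sigma$-prefactor is differentiated, contributing $\left(A^\Delta_b\right)_{\alpha_2\alpha_3}Q^{(n)}_{\alpha_3\alpha_1}(t;\mathbb{B})$; resumming gives \eqref{bed7}. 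One must check that the passage to a slightly smaller index $\alpha_3$ is legitimate, which is exactly why \textit{(iv)} is stated only for $\alpha_3\in(\alpha_1,\alpha_2)$ and $t<T(\alpha_3,\alpha_1;\mathbb{B})$: shrinking the target leaves room for the extra $A^\Delta_b$ and $\mathbb{B}$ factors via \eqref{ACmar} and \eqref{bed4}.

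The main obstacle I anticipate is \emph{bookkeeping the index partitions consistently across the two identities}: the estimate in \textit{(iii)} wants a uniform $2n$-fold subdivision, whereas the differential identity in \textit{(iv)} wants to peel off one factor at a fixed intermediate index $\alpha_3$ and re-express the remainder as $Q_{\alpha_3\alpha_1}$ — so one needs that the series defining $Q_{\alpha_2\alpha_1}$ is independent of the particular choice of intermediate partition (an Ovsyannikov-type uniqueness/consistency point), and that term-by-term differentiation is justified by locally uniform convergence of the differentiated series. A secondary technical point is that $\{S^*(t)\}_{t\ge0}$ is not strongly continuous on all of $\mathcal{K}_{\alpha_2}$, so every $\Sigma$-factor in \eqref{QnPlan} must act on a space $\mathcal{K}_{\alpha}$ with strictly smaller index than its target — this is automatically arranged by keeping all $2n$ intermediate indices strictly increasing, and it is the reason the construction lives on the open region $\mathcal{A}(\mathbb{B})$ rather than on a closed one. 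Everything else — positivity of $(B^\Delta_{2,b})_{\alpha_2\alpha_1}$ noted in Remark \ref{Nahark}, which will matter later but not here — is routine once the series is under control.
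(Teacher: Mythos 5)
Your proposal follows essentially the same route as the paper -- an Ovsyannikov/Dyson-type series of time-ordered integrals of sun-dual factors $\Sigma$ interleaved with the bounded factors $\mathbb{B}$, summed for $t$ below the Ovsyannikov time and differentiated term by term -- but the one quantitative step that carries the whole lemma is done wrongly. With your uniform subdivision of $\alpha_2-\alpha_1$ into $2n$ equal pieces, each of the $n$ factors $\mathbb{B}$ acts across a gap of only $(\alpha_2-\alpha_1)/2n$, so by (\ref{IwN}) and (\ref{norma}) the $n$-th term of the series is bounded by
$\bigl(2n\,\beta(\alpha_2;\mathbb{B})/\bigl(e(\alpha_2-\alpha_1)\bigr)\bigr)^n\, t^n/n! \le \bigl(2t/T(\alpha_2,\alpha_1;\mathbb{B})\bigr)^n$.
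The factor $2$ sits inside the $n$-th power, so it is not an "up to constants" loss: your series converges only for $t<T(\alpha_2,\alpha_1;\mathbb{B})/2$ and gives at best $\|Q_{\alpha_2\alpha_1}(t;\mathbb{B})\|\le T/(T-2t)$, which neither covers the region $\mathcal{A}(\mathbb{B})$ on which the lemma is stated nor yields (\ref{Herf}).

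The missing idea -- exactly what the paper's partition (\ref{z40}) implements -- is that the index budget must be allocated asymmetrically. The $\Sigma$-factors are contractions no matter how small their index gap is, by (\ref{norma}); a strictly positive gap is needed for them only to stay inside the sun-dual framework (Proposition \ref{1pn}) and to differentiate via (\ref{AEmar})/(\ref{bed5}). So one reserves for all $\Sigma$-factors together an arbitrarily small total budget $\delta$, and lets the $l$ factors $\mathbb{B}$ share the remaining $\alpha_2-\alpha_1-\delta$ in equal pieces $\epsilon=(\alpha_2-\alpha_1-\delta)/l$. Then, as in (\ref{Iwv})--(\ref{Iv2N}), the $l$-th term is bounded by $(l/(eT_\delta))^l\,t^l/l!\le (t/T_\delta)^l$ with $T_\delta=(\alpha_2-\alpha_1-\delta)/\beta(\alpha_2;\mathbb{B})$; since $\delta$ may be taken arbitrarily small, the series converges uniformly on $[0,T]$ for every $T<T(\alpha_2,\alpha_1;\mathbb{B})$, which gives claims \textit{(i)}--\textit{(ii)} on the full interval and (\ref{Herf}) by letting $T_\delta$ tend to $T(\alpha_2,\alpha_1;\mathbb{B})$. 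With this correction your remaining steps -- continuity from locally uniform convergence, and term-by-term differentiation producing $(A^\Delta_b)_{\alpha_2\alpha_3}Q^{(n)}+\mathbb{B}_{\alpha_2\alpha_3}Q^{(n-1)}$ and hence (\ref{bed7}) for strictly intermediate $\alpha_3$ -- coincide with the paper's proof (Proposition \ref{N1pn} together with (\ref{KkN})); note also that the paper avoids your "partition-independence" worry by fixing $T$ first and choosing $\alpha$, $\delta$ for that $T$, rather than comparing series built over different partitions.
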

The proof of this lemma is based on the following construction. For
$l\in \mathbb{N}$ and $t>0$, we set
\begin{equation}
 \label{cone}
\mathcal{T}_l :=\{ (t,t_1, \dots, t_l) : 0\leq t_l \leq \cdots \leq
t_1 \leq t\},
\end{equation}
take $\alpha \in (\alpha_1, \alpha_2]$, and then take $\delta <
\alpha - \alpha_1$. Next we divide the interval $[\alpha_1,\alpha]$
into subintervals with endpoints $\alpha^s$, $s=0, \dots, 2l +1$, as
follows. Set $\alpha^0 =\alpha_1$, $\alpha^{2l+1 } = \alpha$, and
\begin{eqnarray}
\label{z40} \alpha^{2s} & = & \alpha_1 +\frac{s}{l+1}\delta + s
\epsilon, \qquad
\epsilon = (\alpha - \alpha_1 - \delta)/l, \\[.2cm]
\alpha^{2s+1} & = & \alpha_1+ \frac{s+1}{l+1}\delta + s \epsilon,
\qquad s= 0,1, \dots, l. \nonumber
\end{eqnarray}
Then for $(t,t_1, \dots, t_l) \in \mathcal{T}_l$, define
\begin{gather}
  \label{Iwo}
\Pi_{\alpha\alpha_1}^{(l)}(t,t_1, \dots , t_l;\mathbb{B}) = \Sigma_{\alpha
\alpha^{2l}}(t-t_1) \mathbb{B}_{\alpha^{2l}\alpha^{2l-1}}\times \cdots \times \\[.2cm] \times
\Sigma_{\alpha^{2s+1} \alpha^{2s}} (t_{l-s}-t_{l-s+1})
\mathbb{B}_{\alpha^{2s}\alpha^{2s-1}} \cdots \Sigma_{\alpha^{3}\alpha^{2}}
(t_{l-1}-t_{l}) \mathbb{B}_{\alpha^{2}\alpha^{1}} \Sigma_{\alpha^1 \alpha_1}
(t_l). \nonumber
\end{gather}
\begin{proposition}
  \label{N1pn}
For both choices of $\mathbb{B}$ and each $l\in \mathbb{N}$, the operators
defined in (\ref{Iwo}) have the following properties:
\begin{itemize}
\item[{\it (i)}] for each $(t, t_1 , \dots , t_l) \in \mathcal{T}_l$,
$\Pi_{\alpha\alpha_1}^{(l)}(t,t_1, \dots , t_l;\mathbb{B})\in
\mathcal{L}(\mathcal{K}_{\alpha_1}, \mathcal{K}_{\alpha})$, and the
map
\[
\mathcal{T}_l \ni (t,t_1, \dots, t_l) \mapsto
\Pi_{\alpha\alpha_1}^{(l)} (t,t_1, \dots, t_l;\mathbb{B}) \in
\mathcal{L}(\mathcal{K}_{\alpha_1}, \mathcal{K}_{\alpha})
\]
is continuous;
\item[{\it (ii)}] for fixed $t_1,t_2, \dots , t_l$, and
each $\varepsilon>0$, the map
\[
(t_1, t_1 + \varepsilon) \ni t \mapsto
\Pi_{\alpha\alpha_1}^{(l)}(t,t_1, \dots , t_l;\mathbb{B}) \in
\mathcal{L}(\mathcal{K}_{\alpha_1}, \mathcal{K}_{\alpha_2})
\]
is continuously differentiable and for each $\alpha' \in (\alpha_1,
\alpha)$ the following holds
\end{itemize}
\begin{equation}
  \label{N1p}
\frac{d}{dt} \Pi_{\alpha\alpha_1}^{(l)} (t,t_1, \dots, t_l;\mathbb{B}) =
(A^\Delta_b)_{\alpha \alpha'} \Pi_{\alpha'\alpha_1}^{(l)} (t,t_1,
\dots, t_l;\mathbb{B}).
\end{equation}
\end{proposition}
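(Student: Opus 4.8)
The plan is to prove Proposition \ref{N1pn} by analyzing the composite operator in \eqref{Iwo} factor by factor, using the properties of the two building blocks already established in the excerpt: the ``sun-dual'' evolution operators $\Sigma_{\alpha_2\alpha_1}(t)$ and the bounded operators $\mathbb{B}_{\alpha_2\alpha_1}$. For part (i), the claim that $\Pi^{(l)}_{\alpha\alpha_1}(t,t_1,\dots,t_l;\mathbb{B}) \in \mathcal{L}(\mathcal{K}_{\alpha_1},\mathcal{K}_\alpha)$ is just a composition of bounded operators: each $\Sigma_{\alpha^{2s+1}\alpha^{2s}}(t_{l-s}-t_{l-s+1})$ is bounded with norm $\leq 1$ by \eqref{norma} (note that on $\mathcal{T}_l$ all the time-differences $t_{l-s}-t_{l-s+1}$ and $t-t_1$ are nonnegative), and each $\mathbb{B}_{\alpha^{2s}\alpha^{2s-1}}$ is bounded by \eqref{IwN}. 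Multiplying the norm bounds gives a finite bound for $\|\Pi^{(l)}\|$. The continuity of the map on $\mathcal{T}_l$ follows by combining: strong continuity of $t\mapsto \Sigma_{\alpha^{2s+1}\alpha^{2s}}(t)$ on $\mathcal{K}_{\alpha^{2s}}$, which holds by Proposition \ref{1pn} applied with $\alpha' = \alpha^{2s} < \alpha^{2s+1} = \alpha$ (here the gap $\alpha^{2s+1}-\alpha^{2s} = \delta/(l+1) > 0$ created by the subdivision \eqref{z40} is exactly what is needed), together with uniform boundedness of all factors, so that a standard ``$\varepsilon/3$'' argument (perturb one factor at a time) upgrades joint continuity from the continuity of each factor.

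For part (ii), I would differentiate \eqref{Iwo} in $t$. On the open interval $(t_1, t_1+\varepsilon)$ only the leftmost factor $\Sigma_{\alpha\alpha^{2l}}(t-t_1)$ depends on $t$; all other factors are constant. So it suffices to differentiate that single factor. By \eqref{AEa}--\eqref{AEmar}, for $k \in \mathcal{K}_{\alpha^{2l}}$ (and note $\alpha^{2l} < \alpha$, so $\mathcal{K}_{\alpha^{2l}}$ is in the domain of $A^\odot_\alpha$ by \eqref{AE}) we have $\frac{d}{dt}S^\odot_\alpha(t)k = A^\odot_\alpha S^\odot_\alpha(t)k$, where $A^\odot_\alpha$ acts as $A^\Delta_b$ — more precisely, one inserts an intermediate space $\alpha'$ with $\alpha^{2l}<\alpha'<\alpha$ and writes $A^\odot_\alpha \Sigma_{\alpha\alpha^{2l}}(s) = (A^\Delta_b)_{\alpha\alpha'}\Sigma_{\alpha'\alpha^{2l}}(s)$ using \eqref{staS} and the fact that $A^\odot$ is the part of $A^*_b$ (hence agrees with $A^\Delta_b$ on the smaller space). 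Since the remaining factors $\mathbb{B}_{\alpha^{2l}\alpha^{2l-1}}\cdots\Sigma_{\alpha^1\alpha_1}(t_l)$ form a fixed bounded operator from $\mathcal{K}_{\alpha_1}$ into $\mathcal{K}_{\alpha^{2l}}$, pulling $(A^\Delta_b)_{\alpha\alpha'}$ out to the left and recognizing $\Sigma_{\alpha'\alpha^{2l}}(t-t_1)$ composed with the rest as $\Pi^{(l)}_{\alpha'\alpha_1}(t,t_1,\dots,t_l;\mathbb{B})$ (which is legitimate because $\alpha'$ can be chosen between $\alpha_1$ and $\alpha$, and the definition \eqref{Iwo} is insensitive to the choice of the outermost endpoint as long as it exceeds $\alpha^{2l}$, by \eqref{norma1}) yields exactly \eqref{N1p}. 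The continuity of the derivative then follows from the continuity statement in part (i) applied with top space $\alpha'$, together with boundedness of $(A^\Delta_b)_{\alpha\alpha'}$.

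The main obstacle I anticipate is bookkeeping with the intermediate exponents: one must verify that the subdivision \eqref{z40} leaves enough ``room'' so that (a) every $\Sigma$-factor is applied with a strictly positive increase of exponent (needed for Proposition \ref{1pn}'s strong continuity and for \eqref{AEmar}'s differentiability), (b) the choice of the auxiliary $\alpha'$ in the differentiation step can be made strictly between $\alpha^{2l}$ and $\alpha$ without disturbing the other factors, and (c) the identification of $\Sigma_{\alpha'\alpha^{2l}}(t-t_1)\,\mathbb{B}_{\alpha^{2l}\alpha^{2l-1}}\cdots$ with $\Pi^{(l)}_{\alpha'\alpha_1}$ is consistent with the definition, i.e. that the $\Pi$-operator's value depends only on the endpoints $\alpha_1$ and the top exponent and not on the internal subdivision — a consequence of the semigroup composition identities \eqref{aKre} and \eqref{norma1}. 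Once these consistency points are pinned down, the argument is a routine application of the product rule for strongly continuous operator-valued maps, so the real work is organizational rather than analytic.
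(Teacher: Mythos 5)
Your proposal is correct and follows essentially the same route as the paper: part (i) by composing the norm bounds (\ref{IwN}), (\ref{norma}) and getting continuity from Proposition \ref{1pn} together with (\ref{aKre}), and part (ii) by differentiating only the leftmost factor $\Sigma_{\alpha\alpha^{2l}}(t-t_1)$ via (\ref{AE})--(\ref{AEmar}), the identification of the sun-dual generator with $(A^\Delta_b)_{\alpha\alpha'}$ on the smaller space, and the consistency relation (\ref{norma1}), which is exactly how the paper derives (\ref{bed5}) and hence (\ref{N1p}). Your explicit attention to the bookkeeping of intermediate exponents is, if anything, slightly more detailed than the paper's own argument.
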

\begin{proof}
The first part of claim {\it (i)} follows by (\ref{Iwo}),
(\ref{IwN}), and (\ref{norma}). To prove the second part we apply Proposition \ref{1pn}
and (\ref{aKre}),  and then (\ref{IwN}), (\ref{IwN1}). By
(\ref{AE}), (\ref{AEmar}), and (\ref{norma1}), and the fact that
\begin{equation*}
 % \label{bed6}
A^{\odot}_{\alpha'}k = (A^\Delta_b)_{\alpha' \alpha} k, \qquad {\rm
for} \ \ k\in \mathcal{K}_\alpha,
\end{equation*}
one gets
\begin{equation}
  \label{bed5}
\frac{d}{dt} \Sigma_{\alpha'\alpha_{2l}} (t) =
(A^\Delta_b)_{\alpha'\alpha} \Sigma_{\alpha\alpha_{2l}} (t), \qquad
\alpha' >\alpha,
\end{equation}
which then yields (\ref{N1p}).
\end{proof}
\vskip.1cm \textit{Proof of Lemma \ref{IN1lm}.} Take any $T<
T(\alpha_2, \alpha_1;\mathbb{B})$ and then pick $\alpha \in (\alpha_1,
\alpha_2]$ and a positive $\delta< \alpha - \alpha_1$ such that
\begin{equation*}
  %\label{akko1}
T < T_\delta := \frac{\alpha - \alpha_1 -
\delta}{\beta(\alpha_2;\mathbb{B})}.
\end{equation*}
For this $\delta$, take $\Pi^{(l)}_{\alpha\alpha_1}$ as in
(\ref{Iwo}), and then for  set
\begin{eqnarray}
  \label{KkN}
& & Q_{\alpha\alpha_1}^{(n)} (t;\mathbb{B}) = \Sigma_{\alpha\alpha_1}(t) \\[.2cm] & & \quad +
\sum_{l=1}^n \int_0^t\int_0^{t_1} \cdots
\int_0^{t_{l-1}}\Pi_{\alpha\alpha_1}^{(l)}(t,t_1, \dots ,
t_l;\mathbb{B}) d t_l \cdots dt_1, \quad n\in \mathbb{N}. \nonumber
\end{eqnarray}
By (\ref{norma}), (\ref{IwN}), and (\ref{z40}) we have from
(\ref{Iwo}) that
\begin{equation}
  \label{Iwv}
\| \Pi_{\alpha\alpha_1}^{(l)} (t,t_1, \dots, t_l;\mathbb{B})\| \leq
\left(\frac{l}{eT_\delta}\right)^l,
\end{equation}
holding for all $l=1, \dots , n$. This yields
\begin{eqnarray}
  \label{Iv2N}
& & \| Q_{\alpha\alpha_1}^{(n)} (t;\mathbb{B}) - Q_{\alpha\alpha_1}^{(n-1)}
(t;\mathbb{B})\|
\\[.2cm] & & \quad
\leq \int_0^t\int_0^{t_1} \cdots
\int_0^{t_{n-1}}\|\Pi_{\alpha\alpha_1}^{(n)}(t,t_1, \dots ,
t_l;\mathbb{B})\| d t_n
\cdots dt_1 \nonumber \\[.2cm] & & \quad
\leq \frac{1}{n!} \left( \frac{n}{e}\right)^n \left(
\frac{T}{T_\delta}\right)^n, \nonumber
\end{eqnarray}
hence, $$\forall t\in [0,T] \quad Q_{\alpha\alpha_1}^{(n)}
(t;\mathbb{B}) \to Q_{\alpha\alpha_1}(t;\mathbb{B})\in
\mathcal{L}(\mathcal{K}_{\alpha_1},\mathcal{K}_{\alpha}), \ \ {\rm
as} \ \ n\to +\infty.$$ This proves claim {\it (i)} of the lemma.
The proof of claim {\it (ii)} follows by the fact that the mentioned
above convergence is uniform on $[0,T]$. The estimate (\ref{Herf})
readily follows from that in (\ref{Iwv}). Now by (\ref{Iwo}) and
(\ref{bed5}) we obtain
\begin{equation*}
  %\label{norma3}
\frac{d}{dt} Q_{\alpha_2\alpha_1}^{(n)} (t;\mathbb{B}) =
\left(A^\Delta_b\right)_{\alpha_2\alpha} Q_{\alpha\alpha_1}^{(n)}
(t;\mathbb{B}) + B_{\alpha_2\alpha} Q_{\alpha\alpha_1}^{(n-1)} (t;\mathbb{B}), \quad
n\in \mathbb{N}.
\end{equation*}
Then the continuous differentiability of the limit and (\ref{bed7})
follow by standard arguments.
%TCIMACRO{\TeXButton{\hfill}{\hfill}}%
%BeginExpansion
\hfill%
%EndExpansion
$\square $ \vskip.1cm  \noindent
\begin{remark}
 \label{Naha1rk}
By (\ref{Iwo}), (\ref{KkN}), and Lemma \ref{IN1lm} we have that
\begin{equation}
 \label{Naha2}
\forall k\in \mathcal{K}_{\alpha_1}^+ \qquad Q_{\alpha_2
\alpha_1}(t;B_{2,b}^\Delta)k \in   \mathcal{K}_{\alpha_2}^+, \quad t
\in [0, T(\alpha_2 , \alpha_1;B_2^\Delta)).
\end{equation}
At the same time, $Q_{\alpha_2 \alpha_1}(t;B^\Delta_b)$ is not
positive, see (\ref{23A}) and Remark \ref{Nahark}.
\end{remark}

\subsection{The proof of Theorem \ref{1tm}}
\label{SS4.3}

First we prove that the problem (\ref{Cauchy}) has a unique
solution on a bounded time interval.
\begin{lemma}
  \label{B1lm}
For each $\alpha_2 > \alpha_1 > - \log \vartheta$, the problem
(\ref{Cauchy}) with $k_0 \in \mathcal{K}_{\alpha_1}$ has a unique
solution $k_t \in \mathcal{K}_{\alpha_2}$ on the time
interval $[0,T(\alpha_2, \alpha_1, B^\Delta_b))$. The solution has
the property: $k_t (\emptyset) =1$ for all $t\in [0,T(\alpha_2,
\alpha_1, B^\Delta_b))$.
\end{lemma}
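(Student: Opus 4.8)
The plan is to write the solution down explicitly using Lemma~\ref{IN1lm} and then argue uniqueness separately. For existence I would set
\[
k_t := Q_{\alpha_2\alpha_1}(t;B^\Delta_b)\,k_0 ,
\]
with the family of operators supplied by Lemma~\ref{IN1lm} applied with $\mathbb{B}=B^\Delta_b$ (this is where $\alpha_1>-\log\vartheta$ is used, through the underlying semigroups of Lemma~\ref{1lm}). By Lemma~\ref{IN1lm}(i),(ii) the map $t\mapsto k_t$ is $\mathcal{K}_{\alpha_2}$-valued and continuous on $[0,T(\alpha_2,\alpha_1;B^\Delta_b))$, and $k_0\in\mathcal{K}_{\alpha_1}\subset\mathcal{D}^\Delta_{\alpha_2}$ by (\ref{Iw1}). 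To verify (\ref{Cauchy}), fix $t_*<T(\alpha_2,\alpha_1;B^\Delta_b)$; since $\beta(\cdot;B^\Delta_b)$ is continuous one has $T(\alpha_3,\alpha_1;B^\Delta_b)\to T(\alpha_2,\alpha_1;B^\Delta_b)$ as $\alpha_3\uparrow\alpha_2$, so there is $\alpha_3\in(\alpha_1,\alpha_2)$ with $t_*<T(\alpha_3,\alpha_1;B^\Delta_b)$. Then (\ref{bed7}) with $\mathbb{B}=B^\Delta_b$, applied to $k_0$, together with $A^\Delta_b+B^\Delta_b=L^\Delta$ from (\ref{key5}) and the consistency relation (\ref{staS}), gives $\tfrac{d}{dt}k_t=L^\Delta_{\alpha_2}k_t$ on a neighbourhood of $t_*$; in particular $k_t\in\mathcal{D}^\Delta_{\alpha_2}$ there and the right-hand side is $\mathcal{K}_{\alpha_2}$-continuous in $t$, so $t\mapsto k_t$ is a solution in the sense of Definition~\ref{I1df}.

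The normalisation is immediate: inspecting (\ref{22A}) and (\ref{23A}), each of the four summands of $L^\Delta$ evaluated at $\eta=\emptyset$ vanishes (it carries either an empty sum over $x\in\emptyset$, or the factor $E(\emptyset)=m\cdot 0+E^-(\emptyset)=0$, or $E^{\pm}(\cdot,\emptyset)=0$), so $(L^\Delta k)(\emptyset)=0$ for every $k$. Hence $\tfrac{d}{dt}k_t(\emptyset)=(L^\Delta k_t)(\emptyset)=0$, i.e. $k_t(\emptyset)=k_0(\emptyset)$ for all $t$; in the intended use $k_0=k_{\mu_0}$ is a correlation function, so $k_0(\emptyset)=1$ and therefore $k_t(\emptyset)=1$ throughout the interval.

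The substantive part, and the step I expect to be the real obstacle, is uniqueness, because a priori a solution $k_t$ of (\ref{Cauchy}) is only known to take values in $\mathcal{K}_{\alpha_2}$ — the largest space of the scale in play — so it cannot be fed directly into the norm bounds (\ref{ACmar}), (\ref{Iw}), which require the argument to lie in a strictly smaller space. Following the scheme of \cite{FKKK}, given two solutions on $[0,T(\alpha_2,\alpha_1;B^\Delta_b))$ with the same datum I would put $v_t:=k_t-\hat k_t\in\mathcal{D}^\Delta_{\alpha_2}$, so $\dot v_t=L^\Delta_{\alpha_2}v_t$ and $v_0=0$, and show $v\equiv 0$ on $[0,T]$ for each $T<T(\alpha_2,\alpha_1;B^\Delta_b)$. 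Picking $\delta>0$ with $T<T_\delta:=(\alpha_2-\alpha_1-\delta)/\beta(\alpha_2;B^\Delta_b)$ and the index partition (\ref{z40}), one represents $v_t$, via the relations (\ref{AE})--(\ref{AEmar}) for the $\Sigma$-semigroups and the decomposition $L^\Delta=A^\Delta_b+B^\Delta_b$, as the $n$-fold iterate built from $\Sigma$-factors and the bounded operators $(B^\Delta_b)_{\cdot\cdot}$ estimated in (\ref{Iwo})--(\ref{Iv2N}); since the equation is homogeneous this iterate still ends with a factor $v_{s_n}$, so (\ref{Iv2N}) yields $\|v_t\|_{\alpha_2}\le\tfrac{1}{n!}(n/e)^n(T/T_\delta)^n\sup_{[0,T]}\|v\|_{\alpha_2}$, and letting $n\to\infty$ (the prefactor tends to $0$ because $T/T_\delta<1$) forces $v\equiv 0$. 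The genuinely delicate point is to justify this iterated representation for an object living only in $\mathcal{K}_{\alpha_2}$: one must exploit that each application of the generator along the partition is absorbed by the smoothing of an intervening $\Sigma$-factor, which is exactly what the estimates (\ref{Iwv}), (\ref{Iv2N}) are designed to encode.
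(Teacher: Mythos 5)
Your existence step and the normalisation $k_t(\emptyset)=1$ are fine and coincide with the paper's argument. The gap is in uniqueness, precisely at the point you flagged and then waved away. The iteration you propose runs along the partition (\ref{z40}) of $[\alpha_1,\alpha_2]$, but the difference $v_t=k_t-\hat k_t$ of two solutions is only known to lie in $\mathcal{K}_{\alpha_2}$, the top space of that interval. Every factor you want to apply to it — $(B^\Delta_b)_{\alpha''\alpha'}$ as well as $\Sigma_{\alpha''\alpha'}(t)$ — is defined only as a map $\mathcal{K}_{\alpha'}\to\mathcal{K}_{\alpha''}$ with $\alpha'<\alpha''$, i.e.\ strictly \emph{up} the scale; nothing in the paper (and nothing you prove) gives a regularising map taking $\mathcal{K}_{\alpha_2}$ back down into some $\mathcal{K}_{\alpha'}$ with $\alpha'<\alpha_2$. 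So the ``smoothing of an intervening $\Sigma$-factor'' you invoke does not exist: the sun-dual semigroup restrictions are merely contractions in the direction of the continuous embeddings, and the estimates (\ref{Iwv}), (\ref{Iv2N}) concern the series (\ref{KkN}) built on data in $\mathcal{K}_{\alpha_1}$, not iterates of the homogeneous equation fed with $\mathcal{K}_{\alpha_2}$-valued data. Consequently the bound you claim, $\|v_t\|_{\alpha_2}\le\frac{1}{n!}(n/e)^n(T/T_\delta)^n\sup_{[0,T]}\|v\|_{\alpha_2}$, with the same norm on both sides and valid up to any $T<T(\alpha_2,\alpha_1;B^\Delta_b)$ in one stroke, is not obtainable by this route.

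The missing idea is to enlarge the scale \emph{above} $\alpha_2$: fix an auxiliary $\alpha_3>\alpha_2$, regard $v_t$ as a solution in $\mathcal{K}_{\alpha_3}$ with zero datum, and write the Duhamel representation
\begin{equation*}
v_t=\int_0^t \Sigma_{\alpha_3\alpha}(t-s)\,\bigl(B^\Delta_b\bigr)_{\alpha\alpha_2}v_s\,ds,\qquad \alpha\in(\alpha_2,\alpha_3),
\end{equation*}
treating $(B^\Delta_b)_{\alpha_3\alpha_2}v_t$ as the inhomogeneity for the $A^\Delta_b$-part (this is where (\ref{bed5}) enters). Iterating this identity through a partition of $[\alpha_2,\alpha_3]$ gives
\begin{equation*}
\|v_t\|_{\alpha_3}\ \le\ \frac{1}{n!}\Bigl(\frac{n}{e}\Bigr)^{n}\Bigl(\frac{2t\,\beta(\alpha_3;B^\Delta_b)}{\alpha_3-\alpha_2}\Bigr)^{n}\sup_{s\in[0,t]}\|v_s\|_{\alpha_2},
\end{equation*}
which forces $v_t=0$ only for $t<(\alpha_3-\alpha_2)/2\beta(\alpha_3;B^\Delta_b)$; one then repeats the argument finitely many times in $t$ to exhaust $[0,T]$, and the injectivity of the embedding $\mathcal{K}_{\alpha_2}\hookrightarrow\mathcal{K}_{\alpha_3}$ transfers $v\equiv 0$ back to $\mathcal{K}_{\alpha_2}$. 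Without both devices — the auxiliary larger index $\alpha_3$ and the stepwise continuation in time — your uniqueness argument does not go through.
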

\begin{proof}
For each $t\in [0,T(\alpha_2, \alpha_1, B^\Delta_b))$, one finds
$\alpha\in (\alpha_1, \alpha_2)$ such that also $t\in [0,T(\alpha,
\alpha_1, B^\Delta_b))$. Then by claim {\it (i)} of Lemma
\ref{IN1lm} and (\ref{Iw1})
\begin{equation}
  \label{fra}
k_t := Q_{\alpha \alpha_1}(t;B^\Delta_b) k_{0}
\end{equation}
lies in $\mathcal{D}^\Delta_{\alpha_2}$. By (\ref{bed7}) the
derivative of $k_t\in \mathcal{K}_{\alpha_2}$ is
\[
\frac{d}{dt} k_t = \bigr( (A^\Delta_b)_{\alpha_2\alpha} +
(B^\Delta_b)_{\alpha_2\alpha} \bigl)k_t =
L^\Delta_{\alpha_2\alpha}k_t.
\]
Hence, $k_t$ is a solution of (\ref{Cauchy}), see (\ref{staS}). Moreover, $k_t
(\emptyset) =1$ since $k_0 (\emptyset) =1$, see (\ref{9AZ}), and
\[
\left(\frac{d}{dt} k_t\right)(\emptyset) = (L^\Delta_\alpha
k_t)(\emptyset) = 0,
\]
see (\ref{21A}) -- (\ref{23A}). To prove the stated uniqueness
assume that $\tilde{k}_t \in \mathcal{D}^\Delta_{\alpha_2}$ is
another solution of (\ref{Cauchy}) with the same initial condition.
Then for each $\alpha_3>\alpha_2$, $v_t := k_t - \tilde{k}_t$ is a
solution of (\ref{Cauchy}) in $\mathcal{K}_{\alpha_3}$
with the zero initial condition. Here we assume that $t$ and $\alpha_3$
are such that  $t< T(\alpha_3, \alpha_1;B^\Delta_b)$. Clearly, $v_t$
also solves (\ref{Cauchy}) in $\mathcal{K}_{\alpha_2}$. Thus, it can be
written down in the following form
\begin{equation}
  \label{fra1}
v_t = \int_0^t \Sigma_{\alpha_3\alpha} (t-s) \left(B^\Delta_b
\right)_{\alpha \alpha_2} v_s d s,
\end{equation}
where $v_t$ on the left-hand side (resp. $v_s$ on the right-hand
side) is considered as an element of $\mathcal{K}_{\alpha_3}$ (resp.
$\mathcal{K}_{\alpha_2}$) and $\alpha \in (\alpha_2, \alpha_3)$.
Indeed, one obtains (\ref{fra1}) by integrating the equation, see
(\ref{key5}),
\[
\frac{d}{dt} v_t = L^\Delta_{\alpha_3\alpha_2} v_t =
\bigl(\left(A^\Delta_b\right)_{\alpha_3 \alpha_2}  +
\left(B^\Delta_b \right)_{\alpha_3\alpha_2} \bigr) v_t, \qquad v_0 =
0,
\]
in which the second summand is considered as a nonhomogeneous term,
see (\ref{bed5}). Let us show that for all $t<T(\alpha_2, \alpha_1;
B^\Delta_b))$, $v_t = 0$ as an element of $\mathcal{K}_{\alpha_2}$.
In view of the embedding $\mathcal{K}_{\alpha_2}\hookrightarrow
\mathcal{K}_{\alpha_3}$, cf. (\ref{z31}), this will follow from the
fact that $v_t = 0$ as an element of $\mathcal{K}_{\alpha_3}$. For a
given $n\in \mathbb{N}$, we set $\epsilon = (\alpha_3 -
\alpha_2)/2n$ and $\alpha^l = \alpha_2 + l \epsilon$, $l=0, \dots ,
2n$. Then we repeatedly apply (\ref{fra1}) and obtain
\begin{eqnarray*}
 v_t & = & \int_0^t \int_0^{t_1} \cdots \int_0^{t_{n-1}}
\Sigma_{\alpha_3\alpha^{2n-1}} (t-t_1) (B^\Delta_b)_{\alpha^{2n-1}
\alpha^{2n-2}}\times \cdots \times \\ & \times &
\Sigma_{\alpha^2\alpha^{1}} (t_{n-1}-t_n) (B^\Delta_b)_{\alpha^{1}
\alpha_2} v_{t_n} d t_n \cdots d t_1.
\end{eqnarray*}
Similarly as in (\ref{Iwv}) we then get from the latter, see
(\ref{IwN}), (\ref{IwN1}), and (\ref{norma}),
\begin{eqnarray}
 \label{fraa1}
\|v_t\|_{\alpha_3} & \leq & \frac{t^n}{n!} \prod_{l=1}^{n}
\|(B^\Delta_b)_{\alpha^{2l-1}\alpha^{2l-2}} \| \sup_{s\in [0,t]}
\|v_s\|_{\alpha_2} \\[.2cm] &\leq & \frac{1}{n!} \left( \frac{n}{e}\right)^n
\left( \frac{2 t \beta(\alpha_3; B^\Delta_b)}{\alpha_3 -
\alpha_2}\right)^n \sup_{s\in [0,t]} \|v_s\|_{\alpha_2}. \nonumber
\end{eqnarray}
This implies that $v_t =0$ for $t < (\alpha_3 - \alpha_2)/ 2
\beta(\alpha_3; B^\Delta_b)$. To prove that $v_t =0$ for all $t$ of
interest one has to repeat the above procedure appropriate number of
times.
\end{proof}
\vskip.1cm To make the next step we need the following result, the
proof of which will be done in Section \ref{S5} below.
\begin{lemma}{\it [Identification Lemma]}
  \label{Wolflm}
For each $\alpha_2 > \alpha_1 > -\log \vartheta$, there exists $\tau
(\alpha_2, \alpha_1) \in (0, T(\alpha_2, \alpha_1;B^\Delta_b))$ such
that $Q_{\alpha_2\alpha_1} (t;B^\Delta_b):
\mathcal{K}_{\alpha_1}^\star \to \mathcal{K}_{\alpha_2}^\star$ for
each $t\in [0, \tau(\alpha_2, \alpha_1)]$, see (\ref{Na}) and Lemma
\ref{IN1lm}.
\end{lemma}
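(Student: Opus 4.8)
Because $Q_{\alpha_2\alpha_1}(t;B^\Delta_b)$ does not preserve positivity (Remark \ref{Naha1rk}), that $k_t:=Q_{\alpha_2\alpha_1}(t;B^\Delta_b)k_0$ lies in $\mathcal{K}^\star_{\alpha_2}$ cannot be read off from its construction. I would obtain it by approximating $k_t$ by correlation functions of \emph{genuine} states and using that $\mathcal{K}^\star_{\alpha_2}$ is closed: for each $G\in B^\star_{\rm bs}(\Gamma_0)$ the map $k\mapsto\langle\!\langle G,k\rangle\!\rangle$ is an integral over a bounded subset of $\Gamma_0$, hence continuous under $\|\cdot\|_{\alpha_2}$-convergence (indeed under uniformly-exponentially-bounded pointwise convergence), so the cone is stable under such limits. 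The approximating states come from finite initial data. For $\mu_0\in\mathcal{P}_{\rm sP}$ with $k_0=k_{\mu_0}\in\mathcal{K}^\star_{\alpha_1}$, take its projections $\mu_0^{(n)}:=\mu_0^{\Lambda_n}$, $\Lambda_n\uparrow\mathbb R^d$; by (\ref{9AA}) their correlation functions are $k_0^{(n)}(\eta)=k_0(\eta)\mathbf{1}_{\{\eta\subset\Lambda_n\}}$, so $k_0^{(n)}(\emptyset)=1$, $\|k_0^{(n)}\|_{\alpha_1}\le\|k_0\|_{\alpha_1}$, $k_0^{(n)}\to k_0$ pointwise on $\Gamma_0$, and each $\mu_0^{(n)}\in\mathcal{P}(\Gamma_0)\cap\mathcal{P}_{\rm sP}$. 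By the Introduction and \cite{K}, each $\mu_0^{(n)}$ evolves via (\ref{R1}) into a finite state whose correlation functions $k_t^{(n)}\ge0$ solve the hierarchy $\frac{d}{dt}k_t^{(n)}=L^\Delta k_t^{(n)}$.

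The decisive step is a sub-Poissonian bound on $k_t^{(n)}$, valid on a fixed time interval with constants independent of $n$; this is where the $(b,\vartheta)$-assumption does its work. Since $(B^\Delta_1k)(\eta)=-\int_{\mathbb R^d}E^{-}(y,\eta)k(\eta\cup y)\,dy\le0$ for $k\ge0$, one has $L^\Delta k^{(n)}_t\le(A^\Delta_b+B^\Delta_{2,b})k^{(n)}_t$ pointwise; the evolution generated by $A^\Delta_b+B^\Delta_{2,b}$ is positivity preserving (Remark \ref{Naha1rk}), so a Duhamel comparison yields $0\le k^{(n)}_t\le Q_{\alpha_2\alpha_1}(t;B^\Delta_{2,b})k_0^{(n)}$, whose $\mathcal{K}_{\alpha_2}$-norm is at most $\frac{T'}{T'-t}\|k_0^{(n)}\|_{\alpha_1}$ with $T'=T(\alpha_2,\alpha_1;B^\Delta_{2,b})$ independent of $n$ (Lemma \ref{IN1lm}). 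Hence, for $\tau<\min\{T',T(\alpha_2,\alpha_1;B^\Delta_b)\}$, $\sup_n\sup_{t\le\tau}\|k_t^{(n)}\|_{\alpha_2}<\infty$; inserting intermediate exponents one also gets $k_t^{(n)}\in\mathcal{D}^\Delta_{\alpha_2}$, so $k_t^{(n)}$ solves (\ref{Cauchy}) in $\mathcal{K}_{\alpha_2}$, and the uniqueness in Lemma \ref{B1lm} forces $k_t^{(n)}=Q_{\alpha_2\alpha_1}(t;B^\Delta_b)k_0^{(n)}$ on $[0,\tau]$.

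It remains to let $n\to\infty$. As $Q_{\alpha_2\alpha_1}(t;B^\Delta_b)$ is a bounded integral operator from $\mathcal{K}_{\alpha_1}$ to $\mathcal{K}_{\alpha_2}$ and $k_0^{(n)}\to k_0$ with $\|k_0^{(n)}\|_{\alpha_1}$ bounded, dominated convergence gives $k_t^{(n)}=Q_{\alpha_2\alpha_1}(t;B^\Delta_b)k_0^{(n)}\to Q_{\alpha_2\alpha_1}(t;B^\Delta_b)k_0=k_t$ pointwise on $\Gamma_0$ for every $t\in[0,\tau]$; since $\langle\!\langle G,k_t^{(n)}\rangle\!\rangle\ge0$ for all $G\in B^\star_{\rm bs}(\Gamma_0)$ ($k_t^{(n)}$ being a correlation function), the limit satisfies $\langle\!\langle G,k_t\rangle\!\rangle\ge0$, i.e. $k_t\in\mathcal{K}^\star_{\alpha_2}$, which proves the lemma with this $\tau$. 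I expect the main obstacle to be the a priori bound of the second paragraph: to even set up the Duhamel comparison one must first know that $k_t^{(n)}$ lies in \emph{some} space of the scale (with a possibly large, $n$-dependent exponent), which requires a separate — though elementary, Yule-type — estimate on finite-system correlation functions; only then can the comparison pull $k_t^{(n)}$ down into $\mathcal{K}_{\alpha_2}$ with a constant free of $n$. The smallness of $\tau$ is inessential: in the proof of Theorem \ref{1tm} the evolution property of $Q$ together with Lemma \ref{B1lm} propagate $k_t\in\mathcal{K}^\star$ over the whole interval $[0,T(\alpha_2,\alpha_1;B^\Delta_b))$.
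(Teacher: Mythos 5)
Your overall skeleton — approximate $\mu_0$ by states carried by finite configurations, use that their correlation functions are genuinely positive definite, and pass to the limit using that the pairing $k\mapsto\langle\!\langle G,k\rangle\!\rangle$, $G\in B^\star_{\rm bs}(\Gamma_0)$, survives bounded pointwise convergence — is indeed the paper's strategy. The genuine gap is in the decisive middle step: identifying the correlation functions $k_t^{(n)}$ of the evolving finite system with $Q_{\alpha_2\alpha_1}(t;B^\Delta_b)k_0^{(n)}$. You propose to first place $k_t^{(n)}$ in some $\mathcal{K}_{\alpha'}$ by an ``elementary, Yule-type'' estimate, then run the Duhamel comparison against the positive $B^\Delta_{2,b}$-evolution, and finally invoke the uniqueness of Lemma \ref{B1lm}. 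The first ingredient fails: dominating the dynamics by the birth-only (no competition) process gives precisely the contact model, whose correlation functions grow like $|\eta|!\,c^{|\eta|}$ — the clustering recalled in the introduction — and such functions lie in no $\mathcal{K}_\alpha$; an exponential-in-$|\eta|$ sup-norm bound on $k_t^{(n)}$ is not elementary, it is essentially what the Identification Lemma is needed for, so your scheme is circular at this point. Moreover, even granting membership in the scale, the uniqueness of Lemma \ref{B1lm} requires $t\mapsto k_t^{(n)}$ to be continuously differentiable in $\mathcal{K}_{\alpha_2}$ and to solve (\ref{Cauchy}) there, whereas what the density evolution of \cite{K} naturally yields (by duality with observables) is the equation in the $L^1$-type spaces $\mathcal{G}_\omega$. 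Transferring from the integral scale to the sup-norm scale is exactly the hard part: the paper does it by introducing the auxiliary model with the cut-off $\varphi_\sigma$ and the weighted space $\mathcal{U}_{\sigma,\alpha}$, which embeds simultaneously into $\mathcal{K}_\alpha$ and $\mathcal{G}_\omega$, proving existence/uniqueness in all three scales (Lemmas \ref{sQQlm}, \ref{uQQlm}, \ref{qQQlm}) so that $k_t^{\Lambda,N}=u_t=q_t^{\Lambda,N}$, and then removing the cut-off by a separate limit $\sigma\downarrow 0$ (Lemma \ref{qblm}). None of this is replaced by the comparison argument you sketch.

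The final limit $n\to+\infty$ also needs repair. You have $k_0^{(n)}\to k_0$ only pointwise with bounded $\|\cdot\|_{\alpha_1}$-norms, not in norm, and $Q_{\alpha_2\alpha_1}(t;B^\Delta_b)$ is built from sun-dual semigroups and the operators $B^\Delta_b$, not exhibited as a kernel operator, so ``dominated convergence'' does not by itself give pointwise convergence of $Q_{\alpha_2\alpha_1}(t;B^\Delta_b)k_0^{(n)}$. The paper's device is the predual evolution $H_{\alpha_1\alpha_2}(t)$ of Lemma \ref{Qq1lm}: one writes $\langle\!\langle G, Q_{\alpha_2\alpha_1}(t;B^\Delta_b)k_0^{(n)}\rangle\!\rangle=\langle\!\langle H_{\alpha_1\alpha_2}(t)G, k_0^{(n)}\rangle\!\rangle$ with $H_{\alpha_1\alpha_2}(t)G\in\mathcal{G}_{\alpha_1}$, and only then does dominated convergence apply (Lemma \ref{lastlm}). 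This fix is in the spirit of your closing step, but the predual evolution has to be constructed; as written, both the identification step and the limiting step are unsupported.
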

In the light of Proposition \ref{rhopn},  Lemma \ref{Wolflm} claims
that for $t\in [0, \tau(\alpha_2, \alpha_1)]$, the solution $k_t$ as
in Lemma \ref{B1lm} is the correlation function of a unique
sub-Poissonian state $\mu_t$ whenever $k_0 = k_{\mu_0}$ for some
$\mu_0 \in \mathcal{P}_{\rm sP}$.

To complete the proof of Theorem \ref{1lm} we need the following
result. Recall that $\mathcal{K}^\star_\alpha \subset
\mathcal{K}^+_\alpha$, $\alpha \in \mathbb{R}$, see (\ref{Naha}).
\begin{lemma}
   \label{Biel-1lm}
Let $\alpha_2$, $\alpha_1$, and $\tau(\alpha_2, \alpha_1)$ be as in
Lemma \ref{Wolflm}. Then there exists positive $\tau_1 (\alpha_2,
\alpha_1) \leq \tau (\alpha_2, \alpha_1)$ such that, for each $t\in
[0, \tau_1(\alpha_2, \alpha_1)]$ and arbitrary $k_0\in
\mathcal{K}_{\alpha_1}^\star$ the following holds, cf. (\ref{IwN1})
and Remark \ref{Nahark},
\begin{equation}
  \label{AKre1}
0\leq \left( Q_{\alpha_2\alpha_1}(t;B^\Delta_b) k_0\right) (\eta)
\leq \left( Q_{\alpha_2\alpha_1}(t;B^\Delta_{2,b}) k_0\right)
(\eta), \qquad \eta \in \Gamma_0.
\end{equation}
\end{lemma}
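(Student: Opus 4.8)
The plan is to compare the two families $Q_{\alpha_2\alpha_1}(t;B^\Delta_b)$ and $Q_{\alpha_2\alpha_1}(t;B^\Delta_{2,b})$ through their defining series \eqref{KkN}, and separately establish the lower bound $0\le Q_{\alpha_2\alpha_1}(t;B^\Delta_b)k_0$ using the identification from Lemma \ref{Wolflm}. For the lower bound, note that by Lemma \ref{Wolflm}, for $t\in[0,\tau(\alpha_2,\alpha_1)]$ and $k_0\in\mathcal{K}^\star_{\alpha_1}$ we already have $Q_{\alpha_2\alpha_1}(t;B^\Delta_b)k_0\in\mathcal{K}^\star_{\alpha_2}\subset\mathcal{K}^+_{\alpha_2}$, which is exactly the left-hand inequality in \eqref{AKre1} and requires no further $\tau_1$-reduction. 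So the real content is the right-hand inequality: that replacing $B^\Delta_b=B^\Delta_1+B^\Delta_{2,b}$ by $B^\Delta_{2,b}$ alone — i.e. discarding the manifestly negative operator $B^\Delta_1$, see \eqref{23A} — can only increase the result when applied to a nonnegative input.

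The key observation is a monotonicity/domination at the level of the Duhamel-type series. Write $D:=B^\Delta_{2,b}-B^\Delta_b=-B^\Delta_1$, so $(D k)(\eta)=\int_{\mathbb{R}^d}E^{-}(y,\eta)k(\eta\cup y)\,dy\ge 0$ whenever $k\ge0$; thus $D$ maps $\mathcal{K}^+$ into $\mathcal{K}^+$, as do $\Sigma_{\alpha_2\alpha_1}(t)$, $(A^\Delta_b)$ restricted appropriately through the semigroup, and $B^\Delta_{2,b}$ itself, see Remarks \ref{Nahark} and \ref{Naha1rk}. The plan is to expand $Q(t;B^\Delta_{2,b})=Q(t;B^\Delta_b+D)$ against $Q(t;B^\Delta_b)$ via a perturbation (second Duhamel) identity of the schematic form
\[
Q_{\alpha_2\alpha_1}(t;B^\Delta_{2,b})k_0 = Q_{\alpha_2\alpha_1}(t;B^\Delta_b)k_0 + \int_0^t Q_{\alpha_2\alpha}(t-s;B^\Delta_{2,b})\,D_{\alpha\alpha'}\,Q_{\alpha'\alpha_1}(s;B^\Delta_b)k_0\,ds,
\]
with intermediate indices $\alpha_1<\alpha'<\alpha<\alpha_2$ inserted to accommodate the unboundedness, exactly as in the proof of Lemma \ref{IN1lm} and in \eqref{fra1}. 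Granting this identity on a small enough interval, the integrand is a composition of positivity-preserving maps applied to $Q_{\alpha'\alpha_1}(s;B^\Delta_b)k_0\in\mathcal{K}^+$ (itself nonnegative by Lemma \ref{Wolflm}), hence the integral is in $\mathcal{K}^+_{\alpha_2}$, giving the desired inequality. To justify the identity one differentiates $t\mapsto \int_0^t Q(t-s;B^\Delta_{2,b})\,D\,Q(s;B^\Delta_b)k_0\,ds$ using \eqref{bed7} for both families and checks that $Q(t;B^\Delta_{2,b})k_0-Q(t;B^\Delta_b)k_0$ solves the same linear Cauchy problem with zero initial data, then invokes the uniqueness argument already used in Lemma \ref{B1lm}.

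The main obstacle is bookkeeping of the scale of Banach spaces: the operator $D$, like $B^\Delta_b$, is only bounded as a map $\mathcal{K}_{\alpha'}\to\mathcal{K}_\alpha$ for $\alpha'<\alpha$, so the Duhamel identity and the differentiation must be set up with a chain of nested indices and are valid only for $t$ below some threshold $\tau_1(\alpha_2,\alpha_1)\le\tau(\alpha_2,\alpha_1)$ determined by the constants $\beta(\alpha_2;B^\Delta_b)$, $\beta(\alpha_2;B^\Delta_{2,b})$ in \eqref{IwN1} and the times $T(\cdot,\cdot;\cdot)$ in \eqref{akko}. One takes $\tau_1(\alpha_2,\alpha_1)$ to be the minimum of $\tau(\alpha_2,\alpha_1)$ and of a convergence radius for the perturbation series analogous to the one controlled by \eqref{Iv2N}; on that interval every term is a composition of positivity-preserving operators acting on a nonnegative function, the series converges in operator norm uniformly in $t$, and the termwise nonnegativity passes to the limit, completing the proof of \eqref{AKre1}.
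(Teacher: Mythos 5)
Your proposal is correct and follows essentially the same route as the paper: the left inequality is taken directly from Lemma \ref{Wolflm}, and the right one comes from the Duhamel-type identity
\[
Q_{\alpha_2\alpha_1}(t;B^\Delta_{2,b})k_0 - Q_{\alpha_2\alpha_1}(t;B^\Delta_b)k_0=\int_0^t Q_{\alpha_2\alpha}(t-s;B^\Delta_{2,b})\left(-B^\Delta_1\right)_{\alpha\alpha'}k_s\,ds,
\]
whose integrand is a composition of positivity-preserving operators (Remark \ref{Naha1rk}) applied to $k_s\in\mathcal{K}^{+}_{\alpha'}$, with $\tau_1$ chosen below the relevant times $T(\cdot,\cdot;\cdot)$ to keep the nested scale of spaces under control. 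This is exactly the argument in the paper (cf. (\ref{Wo})--(\ref{Ju1})), the only cosmetic difference being that you justify the identity by differentiation plus the uniqueness argument of Lemma \ref{B1lm}, while the paper integrates the equation for $u_t-k_t$ treating $-B^\Delta_1 k_s$ as the nonhomogeneous term.
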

\begin{proof}
The left-hand inequality in (\ref{AKre1}) follows directly by Lemma
\ref{Wolflm}. By Lemma \ref{B1lm} $k_t$ as in (\ref{fra}) solves
(\ref{Cauchy}) in $\mathcal{K}_{\alpha_2}$. Set
\[
L^\Delta_2 = A^\Delta + B^\Delta_2 = A^\Delta_b + B^\Delta_{2,b},
\]
where $A^\Delta$, $B^\Delta_2$ and $A^\Delta_b$, $B^\Delta_{2,b}$
are as in (\ref{22A}), (\ref{23A}) and (\ref{key3}), (\ref{key4}),
respectively. Then we introduce $((L^\Delta_2)_\alpha,
\mathcal{D}_\alpha^\Delta)$ and $(L^\Delta_2)_{\alpha\alpha'}$ as in
subsection \ref{SS.Res}. By claims {\it (i)} and {\it (iv)} of Lemma
\ref{IN1lm} we have that
\begin{equation}
  \label{Wo}
u_t := Q_{\alpha \alpha_1} (t;B_{2,b}^\Delta) k_{0}, \qquad  \alpha \in (\alpha_1, \alpha_2),
\end{equation}
solves the problem
\begin{equation}
  \label{aKre3}
  \frac{d}{dt} u_t = (L^\Delta_2)_{\alpha_2} u_t, \qquad u_0 = k_0,
\end{equation}
on the time interval $[0, T(\alpha_2, \alpha_1; B^\Delta_{2,b}))$.
Note that $$T(\alpha_2, \alpha_1; B^\Delta_{b}) \leq T(\alpha_2,
\alpha_1; B^\Delta_{2,b}),$$ see (\ref{IwN1}) and (\ref{akko}). Take
$\alpha, \alpha' \in (\alpha_1, \alpha_2)$, $\alpha' < \alpha$, and
pick positive $\tau_1\leq \tau(\alpha_2, \alpha_1)$ such that
\begin{equation*}
 %\label{Ju}
\tau_1=\tau_1(\alpha_2,\alpha_1) < \min\{T( \alpha_2, \alpha;B^\Delta_b); T(\alpha',
\alpha_1;B^\Delta_{2,b})\}.
\end{equation*}
By (\ref{aKre3}) the difference $u_t - k_t \in
\mathcal{K}_{\alpha_2}$ can be written down in the form
\begin{equation}
\label{Ju1} u_t - k_t =  \int_0^t  Q_{\alpha_2
\alpha}(t-s;B^\Delta_{2,b}) \left( - B^\Delta_1\right)_{\alpha
\alpha'} k_s ds,
\end{equation}
where $t\leq \tau_1$ and the operator $( - B^\Delta_1)_{\alpha \alpha'}$ is positive
with respect to the cone (\ref{Naha}), see (\ref{23A}) and
(\ref{Iw}).
In (\ref{Ju1}), $k_s \in \mathcal{K}_{\alpha'}$ and $Q_{\alpha_2
\alpha}(t-s;B^\Delta_{2,b}) \in
\mathcal{L}(\mathcal{K}_{\alpha},\mathcal{K}_{\alpha_2})$ for all
$s\in [0,\tau_1]$.
Since $Q_{\alpha_2 \alpha}(t-s;B^\Delta_{2,b})$ is also
positive, see Remark \ref{Nahark}, and $k_s \in
\mathcal{K}_{\alpha'}^\star \subset \mathcal{K}_{\alpha'}^+$ (by
(\ref{fra}) and Lemma \ref{Wolflm}), we have $u_t - k_t \in
\mathcal{K}_{\alpha_2}^+$ for $t\leq \tau_1(\alpha_2 , \alpha_1)$,
which yields (\ref{AKre1}).
\end{proof}
\vskip.1cm
\begin{corollary}
  \label{Sunco}
Let $\alpha_2$, $\alpha_1$, and $\tau_1(\alpha_2, \alpha_1)$ be as
in Lemma \ref{Biel-1lm}. Then  the following holds for all $t \leq
\tau_1(\alpha_2, \alpha_1)$
\begin{equation}
  \label{Sun}
\| k_t\|_{\alpha_2} = \bigl\|
Q_{\alpha_2\alpha_1}(t;B^\Delta_b)k_0\bigr\|_{\alpha_2} \leq
\frac{(\alpha_2 - \alpha_1)\|k_0\|_{\alpha_1}}{\alpha_2 - \alpha_1 -
t(\langle a^{+} \rangle + b)}.
\end{equation}
\end{corollary}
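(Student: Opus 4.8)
The plan is to derive the bound (\ref{Sun}) by combining the positivity comparison from Lemma \ref{Biel-1lm} with the operator-norm estimate (\ref{Herf}) applied to the positive family $Q_{\alpha_2\alpha_1}(t;B^\Delta_{2,b})$. First I would fix $t\le\tau_1(\alpha_2,\alpha_1)$ and invoke Lemma \ref{Biel-1lm}: since $k_0\in\mathcal{K}_{\alpha_1}^\star\subset\mathcal{K}_{\alpha_1}^+$, the two-sided inequality (\ref{AKre1}) gives
\begin{equation*}
0\le \bigl(Q_{\alpha_2\alpha_1}(t;B^\Delta_b)k_0\bigr)(\eta)\le \bigl(Q_{\alpha_2\alpha_1}(t;B^\Delta_{2,b})k_0\bigr)(\eta),\qquad\eta\in\Gamma_0,
\end{equation*}
so that pointwise $|k_t(\eta)|\le\bigl(Q_{\alpha_2\alpha_1}(t;B^\Delta_{2,b})k_0\bigr)(\eta)$. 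Taking the weighted $\esssup$ from (\ref{z30}) and using monotonicity of this functional on nonnegative functions yields
\begin{equation*}
\|k_t\|_{\alpha_2}=\bigl\|Q_{\alpha_2\alpha_1}(t;B^\Delta_b)k_0\bigr\|_{\alpha_2}\le\bigl\|Q_{\alpha_2\alpha_1}(t;B^\Delta_{2,b})k_0\bigr\|_{\alpha_2}\le\|Q_{\alpha_2\alpha_1}(t;B^\Delta_{2,b})\|\,\|k_0\|_{\alpha_1}.
\end{equation*}

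Next I would estimate the operator norm of $Q_{\alpha_2\alpha_1}(t;B^\Delta_{2,b})$ by (\ref{Herf}) of Lemma \ref{IN1lm}, giving $\|Q_{\alpha_2\alpha_1}(t;B^\Delta_{2,b})\|\le T(\alpha_2,\alpha_1;B^\Delta_{2,b})/\bigl(T(\alpha_2,\alpha_1;B^\Delta_{2,b})-t\bigr)$. By the definitions (\ref{akko}) and (\ref{IwN1}), $T(\alpha_2,\alpha_1;B^\Delta_{2,b})=(\alpha_2-\alpha_1)/(\langle a^{+}\rangle+b)$, so
\begin{equation*}
\|Q_{\alpha_2\alpha_1}(t;B^\Delta_{2,b})\|\le\frac{(\alpha_2-\alpha_1)/(\langle a^{+}\rangle+b)}{(\alpha_2-\alpha_1)/(\langle a^{+}\rangle+b)-t}=\frac{\alpha_2-\alpha_1}{\alpha_2-\alpha_1-t(\langle a^{+}\rangle+b)},
\end{equation*}
and substituting this into the previous display produces exactly (\ref{Sun}). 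One should check that $\tau_1(\alpha_2,\alpha_1)$ is chosen small enough that $t<T(\alpha_2,\alpha_1;B^\Delta_{2,b})$ throughout, so the right-hand side is finite and positive; this is immediate from $T(\alpha_2,\alpha_1;B^\Delta_b)\le T(\alpha_2,\alpha_1;B^\Delta_{2,b})$ (noted in the proof of Lemma \ref{Biel-1lm}) together with $\tau_1\le\tau(\alpha_2,\alpha_1)<T(\alpha_2,\alpha_1;B^\Delta_b)$.

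The argument is essentially a bookkeeping assembly of results already in hand, so there is no serious obstacle; the only point requiring a moment's care is the passage from the pointwise inequality (\ref{AKre1}) to the norm inequality, i.e.\ verifying that the weighted $\esssup$ defining $\|\cdot\|_{\alpha_2}$ is monotone — which holds because $0\le k_t(\eta)\le\bigl(Q_{\alpha_2\alpha_1}(t;B^\Delta_{2,b})k_0\bigr)(\eta)$ forces $|k_t(\eta)|e^{-\alpha_2|\eta|}\le\bigl(Q_{\alpha_2\alpha_1}(t;B^\Delta_{2,b})k_0\bigr)(\eta)e^{-\alpha_2|\eta|}$ for a.a.\ $\eta$ — and that $Q_{\alpha_2\alpha_1}(t;B^\Delta_{2,b})k_0$ is genuinely nonnegative, which is Remark \ref{Naha1rk}. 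Everything else follows by direct substitution.
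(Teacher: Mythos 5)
Your proposal is correct and follows essentially the same route as the paper, whose proof is exactly to apply the comparison (\ref{AKre1}) and then the norm bound coming from (\ref{Herf}) with (\ref{IwN1}) and (\ref{akko}). The only difference is that you spell out the intermediate bookkeeping (monotonicity of the weighted $\esssup$ and the check $t<T(\alpha_2,\alpha_1;B^\Delta_{2,b})$), which the paper leaves implicit.
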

\begin{proof}
Apply (\ref{AKre1}) and then (\ref{IwN1}) and (\ref{akko}).
\end{proof}
\vskip.1cm \noindent \textit{Proof of Theorem \ref{1tm}.} Let
$\alpha_0 > - \log \vartheta$ be such that $k_{\mu_0} \in
\mathcal{K}_{\alpha_0}$, cf, (\ref{z31}). Then by  Lemma \ref{B1lm}
we have that for each $\alpha_1 > \alpha_0$ and $\alpha\in (\alpha_0, \alpha_1)$,
\[
k_t := Q_{\alpha\alpha_0}(t;B^\Delta_b)k_0 \in
\mathcal{K}_{\alpha}^\star, \qquad t\leq \tau_1 (\alpha_1,
\alpha_0),
\]
solves (\ref{Cauchy}) in $\mathcal{K}_{\alpha_1}$. Its continuation
to an arbitrary $t>0$ follows by (\ref{Sun}) in a standard way.
%TCIMACRO{\TeXButton{\hfill}{\hfill}}%
%BeginExpansion
\hfill%
%EndExpansion
$\square $

\subsection{The proof of Theorem  \ref{2tm}}

\label{SS4.4}

\subsubsection{Case $\langle a^{+} \rangle >0$ and $m\in [0,\langle a^{+} \rangle
]$}

The proof will be done by picking the corresponding bounds for $u_t$
defined in (\ref{Wo}) with $k_0 = k_{\mu_0} \in
\mathcal{K}_{\alpha_0}^\star$. Recall that, for $\alpha_1 >
\alpha_0$, $u_t\in \mathcal{K}_{\alpha_1}$ for $t< T(\alpha_1,
\alpha_0;B^\Delta_{2,b})$. For a given $\delta \leq m$, let us
choose the value of $C_\delta$. The first condition is that
\begin{equation}
  \label{Sun4}
  C_\delta^{|\eta|} \geq k_0 (\eta).
\end{equation}
Next, if (\ref{theta}) holds with a given $\vartheta>0$ and $b=0$,
we take any $\delta \leq m$ and $C_\delta \geq 1/\vartheta$ such
that also (\ref{Sun4}) holds. If (\ref{theta}) holds with $b>0$, we
take any $\delta < m$ and then  $C_\delta \geq
b/(m-\delta)\vartheta$ such that also (\ref{Sun4}) holds. In all
this cases, by Proposition \ref{Ma1pn} we have that
\begin{equation}
  \label{Sun3}
E^{-} (\eta) - \frac{1}{C_\delta} E^{+} (\eta) \geq - (m-\delta)
|\eta|, \qquad \eta \in \Gamma_0.
\end{equation}
Let $r_t(\eta)$ denote the right-hand side of (\ref{bed}). For
$\alpha_1 > \alpha_0$, we take $\alpha, \alpha' \in (\alpha_0,
\alpha_1)$, $\alpha' < \alpha$ and then consider
\begin{eqnarray}
  \label{Ju4}
v_t &:= & Q_{\alpha_1 \alpha_0}(t;B^\Delta_{2,b})r_0 \\[.2cm]
&=& r_t + \int_0^t Q_{\alpha_1
\alpha}(t-s;B^\Delta_{2,b})D_{\alpha\alpha'} r_s ds, \nonumber
\end{eqnarray}
where
\begin{equation}
  \label{Ju3a}
t\leq \tau_2 := \min\left\{\frac{\alpha'- \alpha_0}{\langle a^{+} \rangle -
\delta}  ;T(\alpha_1, \alpha;B^\Delta_{2,b})\right\}.
\end{equation}
The operator $D$ in (\ref{Ju4}) is
\begin{eqnarray}
  \label{Ju5}
(D_{\alpha\alpha'} r_s) (\eta) & = & \bigg{[} - m|\eta| -
E^{-}(\eta) + \frac{1}{C_\delta} \exp\bigg{(}- (\langle a^{+}
\rangle -
\delta)s\bigg{)} E^{+}(\eta) \nonumber \\[.2cm] & + &
\delta |\eta| \bigg{]} r_s (\eta)  \leq  0, \qquad \eta \in
\Gamma_0.
\end{eqnarray}
The latter inequality holds  for all $s\in [0,\tau_2]$,  see (\ref{Sun3}), and all
$m\in [0, \langle a^{+} \rangle]$ and  $\delta < m$. Then by
(\ref{Naha2}) we obtain from (\ref{Wo}), the first line of
(\ref{Ju4}), and (\ref{Sun4}) that
\[
u_t ( \eta) \leq v_t (\eta), \qquad t< T(\alpha_1,
\alpha_0;B^\Delta_{2,b}).
\]
Then by the second line of (\ref{Ju4}) and (\ref{Ju5}) we get that
for $t\leq \tau_2$, see (\ref{Ju3a}), the following holds
\[
u_t (\eta) \leq v_t(\eta) \leq r_t(\eta) , \qquad \eta \in \Gamma_0.
\]
The continuation of the latter inequality to bigger values of $t$ is
straightforward. This completes the proof for this case.

\subsubsection{Case $\langle a^{+} \rangle >0$ and $m >\langle a^{+} \rangle
$}

Take $\varepsilon \in (0, m - \langle a^{+} \rangle)$ and then set
\begin{equation*}
 % \label{JuL}
\vartheta_\varepsilon = \vartheta \left(1 - \frac{\varepsilon + 2
\langle a^{+} \rangle}{2 m} \right).
\end{equation*}
Thereafter, choose $C_\varepsilon\geq 1/\vartheta_\varepsilon$ such
that
\[
C_\varepsilon^{|\eta|} \geq k_0(\eta), \qquad \eta \in \Gamma_0.
\]
Then, cf. (\ref{Sun3}),
\begin{equation}
 \label{Ju6}
E^{-} (\eta) - \frac{1}{C_\varepsilon}  E^{+} (\eta) \geq
-(m-\langle a^{+} \rangle - \varepsilon/2) |\eta|, \qquad \eta \in
\Gamma_0.
\end{equation}
Let now $r_t$ stand for the right-hand side of (\ref{Bedd}). Then
the second line of (\ref{Ju4}) holds with $D_{\alpha\alpha'}$
replaced by $D_{\alpha\alpha'}^\varepsilon$. By definition the
latter is such that: (a) $(D_{\alpha\alpha'}^\varepsilon
r_s)(\emptyset)=0$;
\[
 {\rm (b)} \quad (D^\varepsilon_{\alpha\alpha'}r_s)(\{x\}) = - (m - \langle a^{+} \rangle - \varepsilon) r_s (\{x\}) \leq 0,
\]
and, for $|\eta| \geq 2$, see (\ref{Ju6}),
\begin{eqnarray*}
 %\label{Ju7}
 {\rm (c)} \quad(D^\varepsilon_{\alpha\alpha'}r_s)(\eta) & = & \left[ \varepsilon - m|\eta| - E^{-} (\eta) + \frac{1}{C_\varepsilon}E^{+} (\eta) +
\langle a^{+} \rangle |\eta| \right]r_s (\eta)\\[.2cm]
& \leq & \varepsilon\left( 1 - |\eta|/2\right) r_s(\eta ) \leq 0.
\end{eqnarray*}
This yields (\ref{Bedd}) and thus completes the proof for this case.

\subsubsection{The remaining cases}
For $\langle a^{+} \rangle =0$ and $t>0$, we set
\begin{equation}
  \label{LL}
\left(Q^{(0)}_{\alpha \alpha'} (t)u \right) (\eta) = \exp\left[ - t
E(\eta) \right] u(\eta),
\end{equation}
where $\alpha' < \alpha$ and $u\in \mathcal{K}_{\alpha'}$. Then, cf.
Lemma \ref{IN1lm}, $Q^{(0)}_{\alpha \alpha'} (t):
\mathcal{K}_{\alpha'} \to \mathcal{K}_{\alpha}$ continuously, and the map
$$[0, +\infty) \ni t \mapsto Q^{(0)}_{\alpha
\alpha'}(t) \in \mathcal{L}( \mathcal{K}_{\alpha'} ,
\mathcal{K}_{\alpha})$$ is continuous and such that, cf.
(\ref{bed7}),
\begin{equation}
  \label{LL1}
  \frac{d}{dt} Q^{(0)}_{\alpha'' \alpha'} (t) =
  (A^\Delta_1)_{\alpha'' \alpha} Q^{(0)}_{\alpha \alpha'} (t),
  \qquad \alpha'' > \alpha,
\end{equation}
where $(A^\Delta_1)_{\alpha'' \alpha}$ is defined in (\ref{22A})
 and (\ref{ACmar}). Now we set $u_t = Q^{(0)}_{\alpha \alpha_0}(t)
 k_{\mu_0}$ and obtain from (\ref{LL}) and (\ref{LL1}), similarly as in
 (\ref{Ju1}),
\begin{gather*}
 u_t - k_t = \int_0^t Q^{(0)}_{\alpha \alpha_1}(t) \left( -
 B^\Delta_1\right)_{\alpha_1\alpha_2} k_s ds \geq 0,
\end{gather*}
which yields (\ref{Est}).

To prove that $r_t (\eta) := \vartheta^{-|\eta|}$, $t\geq 0$, is a
stationary solution we set
\[
k_t = Q_{\alpha \alpha_0} (t; B^\Delta_b) r_0,
\]
where $\alpha_0 > -\log \vartheta$ and $\alpha > \alpha_0$. Then the
following holds, cf. (\ref{Ju1}),
\[
k_t = r_t + \int_0^t Q_{\alpha \alpha_2} (t-s; B^\Delta_b)
L^\Delta_{\alpha_2 \alpha_1} r_s d s,
\]
where $\alpha_1 < \alpha_2$ are taken from $(\alpha_0, \alpha)$. For
the case considered, we have
\[
L^\Delta_{\alpha_2 \alpha_1} r_s = L^\Delta_{\alpha_2 \alpha_1} r_0
=0,
\]
which completes the proof for this case.

\section{The Proof of the Identification Lemma}

\label{S5}

To prove Lemma \ref{Wolflm} we use Proposition \ref{rhopn}. Note
that the solution mentioned in Lemma \ref{B1lm} already has
properties (ii) and (iii) of (\ref{9AZ}), cf. (\ref{z30}). Thus, it
remains to prove that also (i) holds. We do this as follows. First,
we approximate the evolution $k_0 \mapsto k_t$ established in Lemma
\ref{B1lm} by evolutions $k_{0,{\rm app}} \mapsto k_{t,{\rm app}}$
such that $k_{t,{\rm app}}$ has property (i). Then we prove that for
each $G\in B^{\star}_{\rm bs}(\Gamma_0)$, $\langle \!\langle G,
k_{t,{\rm app}} \rangle \! \rangle \to \langle \!\langle G, k_{t}
\rangle \! \rangle$ as the approximations are eliminated. The
limiting transition is based on the representation $\langle
\!\langle G, k_{t,{\rm app}} \rangle \! \rangle  = \langle \!\langle
G_t, k_{0,{\rm app}} \rangle \! \rangle$ in which we use the so
called predual evolution $G \mapsto G_t$. Then we just show that
$\langle \!\langle G_t, k_{0,{\rm app}} \rangle \! \rangle \to
\langle \!\langle G_t, k_{0} \rangle \! \rangle$.

\subsection{The predual evolution}

\label{SS5.1}

The aim of this subsection is to construct the evolution $B_{\rm
loc}(\Gamma_0) \ni G_0 \mapsto G_t \in \mathcal{G}_{\alpha_1}$, see
(\ref{z3}) and (\ref{NoR}), such that, for each $\alpha> \alpha_1$
and $k_0\in \mathcal{K}_{\alpha_1}$, the following holds, cf.
(\ref{fra}),
\begin{equation}
 \label{Jun}
\langle \! \langle G_0 , Q_{\alpha \alpha_1}(t;B^\Delta_b)k_0 \rangle \! \rangle =
\langle \! \langle G_t , k_0 \rangle \! \rangle,
\end{equation}
where $b\geq 0$ and $B^\Delta_b$ are as in (\ref{theta}) and
(\ref{key4}), respectively. Let us define the action of $B_b$ on
appropriate $G:\Gamma_0 \to \mathbb{R}$ via the duality
\[
\langle \! \langle G, B^\Delta_b k
\rangle \! \rangle =
\langle \! \langle B_b G,  k
\rangle \! \rangle.
\]
Similarly as in (\ref{key4}) we then get
\begin{gather}
 \label{Aqq4}
 (B_b G)(\eta)= b|\eta| G(\eta) +\int_{\mathbb{R}^d} \sum_{x\in
 \eta} a^{+} (x-y) G(\eta\setminus x \cup y) d y \\[.2cm] -  \sum_{x\in \eta}
 E^{-}(x,\eta \setminus x) G(\eta \setminus x).\nonumber
\end{gather}
For $\alpha_2 > \alpha_1$, let
$(B_b)_{\alpha_1 \alpha_2}$ be the bounded linear operator from
$\mathcal{G}_{\alpha_2}$ to $\mathcal{G}_{\alpha_1}$ the action of
which is defined in (\ref{Aqq4}).
As in estimating the
norm of $B^\Delta_b$ in (\ref{bed4}) one then gets
\begin{equation}
  \label{Cc2}
\|(B_b)_{\alpha_1\alpha_2}\| \leq \frac{\langle a^+ \rangle + b+ \langle
a^- \rangle e^{\alpha_2}}{e (\alpha_2 -\alpha_1)}.
\end{equation}
For the same $\alpha_2$ and $\alpha_1$,
let $S_{\alpha_1 \alpha_2}(t)$
be the restriction to $\mathcal{G}_{\alpha_2}$ of
the corresponding element of the semigroup mentioned in Lemma
\ref{1lm}. Then $S_{\alpha_1 \alpha_2}(t)$ acts as a bounded
contraction from $\mathcal{G}_{\alpha_2}$ to
$\mathcal{G}_{\alpha_1}$.

Now for a given $l\in \mathbb{N}$ and $\alpha$,  $\alpha_1$ as in
(\ref{Jun}), let $\delta$ and $\alpha^s$, $s=0, \dots, 2l+1$, be as
in (\ref{z40}). Then for $t>0$ and $(t,t_1 , \dots , t_l)\in
\mathcal{T}_l$, see (\ref{cone}), we define, cf. (\ref{Iwo}),
\begin{gather*}
%  \label{Cc3}
\Omega_{\alpha_1\alpha}^{(l)} (t, t_1, \dots , t_l) = S_{\alpha_1\alpha^1}(t_l)
(B_b)_{\alpha^1\alpha^2} S_{\alpha^2 \alpha^3} (t_{l-1}- t_l) \times
\cdots \times \\[.2cm]
\times (B_b)_{\alpha^{2s-1} \alpha^{2s}} S_{\alpha^{2s}\alpha^{2s+1}}
(t_{l-s} - t_{l-s+1}) \cdots  (B_b)_{\alpha^{2l-1} \alpha^{2l}}
S_{\alpha^{2l}\alpha} (t - t_{1}). \nonumber
\end{gather*}
As in Proposition \ref{N1pn}, one shows that the map
\[
\mathcal{T}_l \ni  (t,t_1 , \dots , t_l) \mapsto \Omega_{\alpha_1\alpha}^{(l)} (t, t_1, \dots , t_l) \in
\mathcal{L}(\mathcal{G}_{\alpha},\mathcal{G}_{\alpha_1})
\]
is continuous.
Define
\begin{gather}
 \label{Qq}
 H^{(n)}_{\alpha_1\alpha} (t) = S_{\alpha_1\alpha}(t)
 + \sum_{l=1}^n \int_0^t
\int_0^{t_1} \cdots \int_0^{t_{l-1}} \Omega_{\alpha_1\alpha}^{(l)} (t, t_1 ,
\dots, t_l) d t_l \cdots d t_1.
\end{gather}
\begin{lemma}
\label{Qq1lm} For each $T\in (0, T(\alpha, \alpha_1;B^\Delta_b))$,
see (\ref{akko}) and (\ref{IwN1}), the sequence of operators defined
in (\ref{Qq}) converges in
$\mathcal{L}(\mathcal{G}_{\alpha},\mathcal{G}_{\alpha_1})$ to a
certain $H_{\alpha_1\alpha} (t)$ uniformly on $[0,T]$, and for each
$G_0 \in \mathcal{G}_\alpha$ and $k_0 \in \mathcal{K}_{\alpha_1}$ the
following holds
\begin{equation}
 \label{Qq1}
 \langle \! \langle G_0 , Q_{\alpha \alpha_1}(t;B^\Delta_b)k_0 \rangle \! \rangle =
\langle \! \langle H_{\alpha_1\alpha} (t) G_0, k_0 \rangle \! \rangle, \qquad t\in [0,T].
\end{equation}
\end{lemma}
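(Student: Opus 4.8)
The plan is to mirror the construction of the operators $Q_{\alpha\alpha_1}(t;B^\Delta_b)$ from Lemma \ref{IN1lm} on the predual side, exploiting the fact that $\Omega^{(l)}_{\alpha_1\alpha}$ is, term by term, the adjoint of the building block $\Pi^{(l)}_{\alpha\alpha_1}$ appearing in (\ref{Iwo}) and (\ref{KkN}). First I would establish the norm estimate $\|\Omega^{(l)}_{\alpha_1\alpha}(t,t_1,\dots,t_l)\|\leq (l/eT_\delta)^l$ exactly as (\ref{Iwv}) was obtained: the operators $S_{\alpha^{2s}\alpha^{2s+1}}(\cdot)$ are contractions by Lemma \ref{1lm}, and each $(B_b)_{\alpha^{2s-1}\alpha^{2s}}$ is bounded with norm controlled by (\ref{Cc2}), which has the same right-hand side as (\ref{bed4}); the widths $\alpha^{2s}-\alpha^{2s-1}=\delta/(l+1)$ and $\alpha^{2s+1}-\alpha^{2s}=\epsilon$ are the same as in (\ref{z40}). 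Integrating over the simplex $\mathcal{T}_l$ and summing the geometric-type series exactly as in (\ref{Iv2N}) shows that the sequence $H^{(n)}_{\alpha_1\alpha}(t)$ is Cauchy in $\mathcal{L}(\mathcal{G}_\alpha,\mathcal{G}_{\alpha_1})$, uniformly on $[0,T]$ for any $T<T_\delta$, hence convergent to some $H_{\alpha_1\alpha}(t)$; choosing $\delta$ small enough lets $T$ approach $T(\alpha,\alpha_1;B^\Delta_b)$. Uniform convergence also gives continuity of $t\mapsto H_{\alpha_1\alpha}(t)$.

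Next I would prove the duality (\ref{Qq1}) at the level of the finite sums. The point is that $\langle\!\langle G, S^\odot_{\alpha_2}(t)k\rangle\!\rangle=\langle\!\langle S_{\alpha_1\alpha_2}(t)G,k\rangle\!\rangle$ by definition of the sun-dual semigroup, and $\langle\!\langle G,B^\Delta_b k\rangle\!\rangle=\langle\!\langle B_b G,k\rangle\!\rangle$ by (\ref{Aqq4}); applying these identities successively through the product structure of $\Pi^{(l)}_{\alpha\alpha_1}$ in (\ref{Iwo}), one reads off that each integrand in (\ref{KkN}) pairs with $k_0$ against the corresponding integrand of (\ref{Qq}) paired with $G_0$, i.e.
\[
\langle\!\langle G_0,\Pi^{(l)}_{\alpha\alpha_1}(t,t_1,\dots,t_l;B^\Delta_b)k_0\rangle\!\rangle=\langle\!\langle \Omega^{(l)}_{\alpha_1\alpha}(t,t_1,\dots,t_l)G_0,k_0\rangle\!\rangle.
\]
Summing over $l=1,\dots,n$ and integrating over the simplices gives $\langle\!\langle G_0,Q^{(n)}_{\alpha\alpha_1}(t;B^\Delta_b)k_0\rangle\!\rangle=\langle\!\langle H^{(n)}_{\alpha_1\alpha}(t)G_0,k_0\rangle\!\rangle$. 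Then I pass to the limit $n\to\infty$: on the left, $Q^{(n)}_{\alpha\alpha_1}(t;B^\Delta_b)\to Q_{\alpha\alpha_1}(t;B^\Delta_b)$ in $\mathcal{L}(\mathcal{K}_{\alpha_1},\mathcal{K}_\alpha)$ by Lemma \ref{IN1lm}, and $\langle\!\langle\cdot,\cdot\rangle\!\rangle$ is continuous in each argument since $\mathcal{G}_\alpha$ and $\mathcal{K}_\alpha$ are in duality; on the right, $H^{(n)}_{\alpha_1\alpha}(t)\to H_{\alpha_1\alpha}(t)$ in $\mathcal{L}(\mathcal{G}_\alpha,\mathcal{G}_{\alpha_1})$ as just shown. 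This yields (\ref{Qq1}).

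I would need to be slightly careful about the ordering conventions: in (\ref{Iwo}) the factors run from $\Sigma_{\alpha\alpha^{2l}}$ on the left down to $\Sigma_{\alpha^1\alpha_1}$ on the right, whereas in $\Omega^{(l)}_{\alpha_1\alpha}$ they run from $S_{\alpha_1\alpha^1}$ on the left to $S_{\alpha^{2l}\alpha}$ on the right — this reversal is exactly what adjoining a product of operators does, so it is consistent, but it has to be tracked so that the intermediate indices $\alpha^s$ match up. The main obstacle, such as it is, is bookkeeping: verifying that the scale of intermediate spaces $\alpha^0<\alpha^1<\dots<\alpha^{2l+1}$ used on the predual side coincides with the one on the dual side so that each adjoint pairing $\langle\!\langle S_{\alpha^{2s}\alpha^{2s+1}}(\cdot)\,\cdot\,,\cdot\rangle\!\rangle=\langle\!\langle\,\cdot\,,S^\odot_{\alpha^{2s+1}}(\cdot)\,\cdot\,\rangle\!\rangle$ and $\langle\!\langle(B_b)_{\alpha^{2s-1}\alpha^{2s}}\,\cdot\,,\cdot\rangle\!\rangle=\langle\!\langle\,\cdot\,,(B^\Delta_b)_{\alpha^{2s}\alpha^{2s-1}}\,\cdot\,\rangle\!\rangle$ is literally an instance of the defining dualities; once that is in place, the analytic estimates are routine copies of (\ref{Iwv})--(\ref{Iv2N}). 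I would also note that one must check $G_0$ ranges over all of $\mathcal{G}_\alpha$ (not merely $B_{\rm loc}(\Gamma_0)$), which is immediate since all operators involved are bounded on the relevant $\mathcal{G}$-spaces and $B_{\rm loc}(\Gamma_0)$ is dense.
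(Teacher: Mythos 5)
Your proposal is correct and follows essentially the same route as the paper's proof: the paper likewise bounds $\| H^{(n)}_{\alpha_1\alpha}(t)-H^{(n-1)}_{\alpha_1\alpha}(t)\|$ exactly as in (\ref{Iv2N}) to get uniform convergence on $[0,T]$, verifies the identity $\langle\!\langle G_0, Q^{(n)}_{\alpha\alpha_1}(t;B^\Delta_b)k_0\rangle\!\rangle=\langle\!\langle H^{(n)}_{\alpha_1\alpha}(t)G_0,k_0\rangle\!\rangle$ term by term via the adjoint pairings (what it calls ``direct inspection''), and then passes to the limit $n\to+\infty$. Only a cosmetic slip in your bookkeeping: by (\ref{z40}) the gaps carrying the $B$-factors are $\alpha^{2s}-\alpha^{2s-1}=\epsilon$ while the semigroup factors sit on the gaps $\alpha^{2s+1}-\alpha^{2s}=\delta/(l+1)$, the opposite of what you wrote, but the estimate $\bigl(l/eT_\delta\bigr)^l$ you state is the correct one and the argument is unaffected.
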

\begin{proof}
For the operators defined in (\ref{Qq}),  similarly as in (\ref{Iv2N}) we get the following estimate
\[
 \| H^{(n)}_{\alpha_1\alpha} (t) - H^{(n-1)}_{\alpha_1\alpha} (t) \| \leq \frac{1}{n!} \left( \frac{n}{e}\right)^n
 \left( \frac{T}{T_\delta}\right)^n,
\]
which yields the convergence stated in the lemma. By direct inspection one gets that
\[
 \langle \! \langle G_0 , Q^{(n)}_{\alpha \alpha_1}(t;B^\Delta)k_0 \rangle \! \rangle =
\langle \! \langle H^{(n)}_{\alpha_1\alpha} (t) G_0, k_0 \rangle \! \rangle,
\]
see (\ref{KkN}). Then (\ref{Qq1}) is obtained from the latter in the
limit $n\to +\infty$. Similarly as in (\ref{Herf}),  for the limiting operator
the following estimate holds
\begin{equation}
\label{qx} \|H_{\alpha_1\alpha}(t)\| \leq \frac{T(\alpha,
\alpha_1;B^\Delta_b)}{T(\alpha, \alpha_1;B^\Delta_b) - t}.
\end{equation}
\end{proof}

\subsection{An auxiliary model}

\label{SS5.2}

The approximations mentioned at the beginning of this section employ
also an auxiliary model, which we introduce and study now. For this
model, we construct three kinds of evolutions. The first one is
$k_{0} \mapsto k_{t}\in \mathcal{K}_\alpha$ obtained as in Lemma
\ref{B1lm}. Another evolution $q_{0} \mapsto q_{t}\in
\mathcal{G}_\omega$ is constructed in such a way that $q_t$ is
positive definite in the sense that $\langle \! \langle G, q_t
\rangle \! \rangle \geq 0$ for all $G\in B^\star_{\rm
bs}(\Gamma_0)$. These evolutions, however, take place in different spaces.
To relate them to each other  we construct one more evolution, $u_0
\mapsto u_t$, which takes place in the intersection of the mentioned Banach
spaces. The aim is to show that $k_{t} = u_t = q_t$ and thereby to get
the desired property of $k_{t}$. Thereafter, we prove the
convergence mentioned above.

\subsubsection{The model}

The function
\begin{equation}
  \label{Qq20}
  \varphi_\sigma (x) = \exp\left(- \sigma |x|^2 \right), \qquad \sigma >0, \quad x\in
  \mathbb{R}^d,
\end{equation}
has the following evident properties
\begin{equation}
  \label{Be}
\bar{\varphi}_\sigma := \int_{\mathbb{R}^d} \varphi (x) d x <
\infty, \quad \quad  \varphi_\sigma (x) \leq 1, \ \ \ x\in
\mathbb{R}^d.
\end{equation}
The model we need is characterized by $L$ as in (\ref{R20}) with
$E^{+}(x,\eta)$, cf. (\ref{Ra20}), replaced by
\begin{equation}
 \label{Be1}
  E^{+}_\sigma (x, \eta) = \varphi_\sigma(x)E^{+}_\sigma (x, \eta) = \varphi_\sigma(x) \sum_{y\in \eta} a^{+} (x-y).
\end{equation}

\subsubsection{The evolution in $\mathcal{K}_\alpha$}

For the new model (with $E^{+}_\sigma$ as in (\ref{Be1})), the operator $
L^{\Delta,\sigma}$ corresponding  to $L^\Delta$ takes the form, cf.
(\ref{21A}) -- (\ref{23A}) and (\ref{key3}) -- (\ref{key5}),
\begin{equation}
  \label{Qq21}
 L^{\Delta,\sigma} = A^{\Delta,\sigma} + B^{\Delta,\sigma} = A^{\Delta,\sigma}_b + B^{\Delta,\sigma}_b.
\end{equation}
Here
\begin{eqnarray}
  \label{Qq22}
A^{\Delta,\sigma} & = & A^{\Delta}_1 + A^{\Delta,\sigma}_2, \qquad
A^{\Delta,\sigma}_b  =  A^{\Delta}_{1,b} + A^{\Delta,\sigma}_2,  \\[.2cm]
B^{\Delta,\sigma}& = & B^{\Delta}_1 + B^{\Delta,\sigma}_2, \qquad
B^{\Delta,\sigma}_b =  B^{\Delta}_1 + B^{\Delta,\sigma}_{2,b},
\nonumber
\end{eqnarray}
where $A^{\Delta}_1$,  $B^{\Delta}_1$, and $A^{\Delta}_{1,b}$ are the same as in
(\ref{22A}), (\ref{23A}), and (\ref{key3}), respectively,
and
\begin{eqnarray}
  \label{Qq23}
\left(A^{\Delta,\sigma}_2 k\right)(\eta) & = & \sum_{x\in \eta}
\varphi_\sigma(x) E^{+} (x, \eta\setminus x) k(\eta \setminus x),
\\[.2cm]
\left(B^{\Delta,\sigma}_2 k\right)(\eta) & = & b|\eta| k(\eta) + \int_{\mathbb{R}^d}
\sum_{x\in \eta} \varphi_\sigma(x) a^{+} (x-y) k(\eta \setminus
x\cup y) dy. \nonumber
\end{eqnarray}
Note that these $A^{\Delta,\sigma}_b$ and $B^{\Delta,\sigma}_b$
define the corresponding bounded operators acting from
$\mathcal{K}_{\alpha'}$ to $\mathcal{K}_{\alpha}$ for each real
$\alpha > \alpha'$. As in (\ref{Iw1}) we then set
\begin{equation}
  \label{QQ1}
  \mathcal{D}^{\Delta, \sigma}_\alpha = \{k\in
  \mathcal{K}_\alpha: L^{\Delta, \sigma} k \in \mathcal{K}_\alpha\},
\end{equation}
and thus define the corresponding operator $(L^{\Delta,
\sigma}_\alpha, \mathcal{D}^{\Delta, \sigma}_\alpha)$. Along with
(\ref{Cauchy}) we also consider
\begin{equation}
  \label{Cauchy-sigma}
 \frac{d}{dt} k_t = L^{\Delta, \sigma}_\alpha k_t, \qquad
 k_t|_{t=0}= k_0 \in \mathcal{D}^{\Delta, \sigma}_\alpha.
\end{equation}
By the literal repetition of the construction used in the proof of Lemma
\ref{IN1lm} one obtains the operators $Q^\sigma_{\alpha \alpha'}(t; B^{\Delta, \sigma}_b)$,
$(\alpha, \alpha', t) \in \mathcal{A}(B^\Delta_b)$, see (\ref{aKK}), the norm of which satisfies,
cf. (\ref{Herf}),
\begin{equation}
  \label{QQ3}
  \|Q^\sigma_{\alpha \alpha'}(t; B^{\Delta, \sigma}_b)\| \leq
  \frac{T(\alpha, \alpha';B^\Delta_b)}{T(\alpha, \alpha';B^\Delta_b) -
  t},
\end{equation}
which is uniform in $\sigma$.
\begin{lemma}
  \label{sQQlm}
Let $\alpha_1$ and $\alpha_2$ be as in Lemma \ref{B1lm}. Then for a
given $k_0\in \mathcal{K}_{\alpha_1}$, the unique solution of
(\ref{Cauchy-sigma}) in $\mathcal{K}_{\alpha_2}$ is given by
\begin{equation}
  \label{QQ2}
  k_t = Q^\sigma_{\alpha \alpha_1}(t; B^{\Delta, \sigma}_b)k_0, \qquad \alpha \in (\alpha_1, \alpha_2), \ \
  t< T(\alpha_2, \alpha_1;B^\Delta_b).
\end{equation}
\end{lemma}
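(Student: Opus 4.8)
\emph{Proof idea.} The plan is to mimic verbatim the proofs of Lemma \ref{1lm}, Lemma \ref{IN1lm} and Lemma \ref{B1lm}, the only point needing a word of justification being that the substitution $E^{+}(x,\eta)\mapsto E^{+}_\sigma(x,\eta)=\varphi_\sigma(x)E^{+}(x,\eta)$, with $\varphi_\sigma(x)\in(0,1]$ by (\ref{Be}), spoils none of the estimates used there. First I would check that the auxiliary substochastic and sun-dual semigroups survive the substitution. Let $A_b^\sigma=A_{1,b}+A_2^\sigma$ be the operator of (\ref{z1}) with $E^{+}$ in $A_2$ replaced by $E^{+}_\sigma$ as in (\ref{Be1}). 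Since $\varphi_\sigma\leq1$, one has $\sum_{x\in\eta}\varphi_\sigma(x)E^{+}(x,\eta\setminus x)\leq E^{+}(\eta)$, so the bound (\ref{z7}) holds verbatim for $A_2^\sigma$ and the verification (\ref{Qq15}) of condition (\ref{cond:substoch}) goes through unchanged for every $\alpha>-\log\vartheta$. Hence, by Proposition \ref{le:substoch}, $(A_b^\sigma,\mathcal{D}_\alpha)$ generates a substochastic semigroup $\{S^\sigma(t)\}_{t\geq0}$ on $\mathcal{G}_\alpha$, whose sun-dual restrictions $\Sigma^\sigma_{\alpha_2\alpha_1}(t)$ are defined exactly as in subsection \ref{SS4.2} and satisfy $\|\Sigma^\sigma_{\alpha_2\alpha_1}(t)\|\leq1$. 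Feeding $\Sigma^\sigma$ and $B^{\Delta,\sigma}_b$ into the series (\ref{KkN}) produces the operators $Q^\sigma_{\alpha\alpha'}(t;B^{\Delta,\sigma}_b)$; since $\|(B^{\Delta,\sigma}_b)_{\alpha_2\alpha_1}\|$ again obeys (\ref{bed4}) (once more because $\varphi_\sigma\leq1$), the convergence estimate (\ref{Iv2N}) holds with $\sigma$-independent constants, so properties {\it (i)}--{\it (iv)} of Lemma \ref{IN1lm} together with (\ref{QQ3}) carry over to $Q^\sigma$.

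Granted this, existence follows as in Lemma \ref{B1lm}: for $t<T(\alpha_2,\alpha_1;B^\Delta_b)$ choose $\alpha\in(\alpha_1,\alpha_2)$ with $t<T(\alpha,\alpha_1;B^\Delta_b)$ and put $k_t=Q^\sigma_{\alpha\alpha_1}(t;B^{\Delta,\sigma}_b)k_0$. By property {\it (i)} this lies in $\mathcal{K}_\alpha$, hence in $\mathcal{D}^{\Delta,\sigma}_{\alpha_2}$ (the operators $A^{\Delta,\sigma}_b$ and $B^{\Delta,\sigma}_b$ being bounded from $\mathcal{K}_\alpha$ into $\mathcal{K}_{\alpha_2}$), see (\ref{QQ1}); and by the analogue of (\ref{bed7}),
\[
\frac{d}{dt}k_t=\bigl((A^{\Delta,\sigma}_b)_{\alpha_2\alpha}+(B^{\Delta,\sigma}_b)_{\alpha_2\alpha}\bigr)k_t=L^{\Delta,\sigma}_{\alpha_2}k_t,
\]
so $k_t$ solves (\ref{Cauchy-sigma}) on $[0,T(\alpha_2,\alpha_1;B^\Delta_b))$.

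For uniqueness I would copy the argument of Lemma \ref{B1lm} line by line. If $\tilde k_t\in\mathcal{D}^{\Delta,\sigma}_{\alpha_2}$ were another solution with the same initial datum, then $v_t=k_t-\tilde k_t$ would solve (\ref{Cauchy-sigma}) with zero initial condition, also in $\mathcal{K}_{\alpha_3}$ for any $\alpha_3>\alpha_2$, and, splitting $L^{\Delta,\sigma}=A^{\Delta,\sigma}_b+B^{\Delta,\sigma}_b$ and treating the $B$-part as a source term, it would admit the Duhamel representation $v_t=\int_0^t\Sigma^\sigma_{\alpha_3\alpha}(t-s)(B^{\Delta,\sigma}_b)_{\alpha\alpha_2}v_s\,ds$ with $\alpha\in(\alpha_2,\alpha_3)$, exactly as in (\ref{fra1}). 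Iterating this identity $n$ times over a uniform partition of $[\alpha_2,\alpha_3]$ and using the analogue of (\ref{norma}) for $\Sigma^\sigma$ together with (\ref{bed4}), one gets, as in (\ref{fraa1}), a bound of the form $\|v_t\|_{\alpha_3}\leq\frac{1}{n!}(n/e)^n\bigl(2t\,\beta(\alpha_3;B^\Delta_b)/(\alpha_3-\alpha_2)\bigr)^n\sup_{s\in[0,t]}\|v_s\|_{\alpha_2}$, which forces $v_t=0$ for $t$ below $(\alpha_3-\alpha_2)/2\beta(\alpha_3;B^\Delta_b)$ and then, after finitely many repetitions, for all $t<T(\alpha_2,\alpha_1;B^\Delta_b)$.

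I do not expect a genuine obstacle: the whole content is that $\varphi_\sigma\leq1$ lets every estimate of subsection \ref{SS4.2} (and of Lemma \ref{1lm}) be reproduced with the \emph{same}, $\sigma$-independent, constants --- precisely the uniformity already noted after (\ref{QQ3}), which is what will later allow the limit $\sigma\to0^{+}$ in the approximation scheme of Section \ref{S5}. The only mildly delicate bookkeeping point is keeping the $A_{1,b}$-relative bound of the birth generator strictly below $1$ after the modification, and that is immediate from $E^{+}_\sigma\leq E^{+}$ pointwise and $\alpha>-\log\vartheta$.
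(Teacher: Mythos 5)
Your proposal is correct and is essentially the paper's own argument: the paper simply says ``Repeat the proof of Lemma \ref{B1lm},'' having already noted (before (\ref{QQ3})) that the operators $Q^\sigma_{\alpha\alpha'}(t;B^{\Delta,\sigma}_b)$ arise by literal repetition of the construction in Lemma \ref{IN1lm}, with bounds uniform in $\sigma$ because $\varphi_\sigma\leq 1$. Your spelled-out check that $E^{+}_\sigma\leq E^{+}$ preserves the relative bound in Lemma \ref{1lm} and all norm estimates of subsection \ref{SS4.2} is exactly the justification the paper leaves implicit.
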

\begin{proof}
Repeat the proof of Lemma \ref{B1lm}.
\end{proof}

\subsubsection{The evolution in $\mathcal{U}_{\sigma,\alpha}$}
For $\varphi_\sigma$ as in (\ref{Qq20}) we set
\begin{equation*}
 % \label{Qq24}
  e(\varphi_\sigma ; \eta) = \prod_{x\in \eta} \varphi_\sigma (x),
  \qquad \eta \in \Gamma_0,
\end{equation*}
and introduce the following Banach space. For $u:\Gamma_0 \to
\mathbb{R}$, we define the norm, cf. (\ref{z30}),
\begin{equation}
  \label{Qq31}
\|u\|_{\sigma, \alpha} = \esssup_{\eta \in \Gamma_0}
\frac{|u(\eta)|\exp(-\alpha |\eta|)}{e(\varphi_\sigma ; \eta)}.
\end{equation}
Thereafter, set
\begin{equation*}
 % \label{Qq32}
\mathcal{U}_{\sigma, \alpha} = \{u:\Gamma_0 \to \mathbb{R}:
\|u\|_{\sigma, \alpha} < \infty\}.
\end{equation*}
By (\ref{Qq20}) and (\ref{z30})  we have that
\begin{equation*}
 % \label{Qq33}
\|u\|_{ \alpha} \leq \|u\|_{\sigma, \alpha}, \qquad u \in
\mathcal{U}_{\sigma, \alpha},
\end{equation*}
which yields $\mathcal{U}_{\sigma, \alpha} \hookrightarrow
\mathcal{K}_{\alpha}$. Moreover, as in (\ref{z31}) we also have that
$\mathcal{U}_{\sigma, \alpha'} \hookrightarrow \mathcal{U}_{\sigma,
\alpha}$ for each real $\alpha > \alpha'$.

Now let us define the operator $L^{\Delta,\sigma}_{\alpha,u}$ in
$\mathcal{U}_{\sigma,\alpha}$ the action of which is described in (\ref{Qq21}) --
(\ref{Qq23}) and the domain is, cf. (\ref{QQ1}),
\begin{equation}
  \label{QQ4}
  \mathcal{D}^{\Delta,\sigma}_{\alpha,u} = \{ u \in
  \mathcal{U}_{\sigma,\alpha}: L^{\Delta,\sigma}u\in
  \mathcal{U}_{\sigma,\alpha}\}.
\end{equation}
Then we consider
\begin{equation}
  \label{Cauchy-u}
 \frac{d}{dt} u_t = L^{\Delta,\sigma}_{\alpha,u} u_t, \qquad
 u_t|_{t=0} = u_0 \in  \mathcal{D}^{\Delta,\sigma}_{\alpha,u}.
\end{equation}
Note that $\mathcal{U}_{\sigma,\alpha''} \subset
\mathcal{D}(L^{\Delta,\sigma}_{\alpha,u})$ for each $\alpha''
<\alpha$, and
\begin{equation}
  \label{QQ5}
(L^{\Delta,\sigma}_{\alpha,u},
\mathcal{D}^{\Delta,\sigma}_{\alpha,u}) \subset
(L^{\Delta,\sigma}_{\alpha},
\mathcal{D}^{\Delta,\sigma}_{\alpha}).
\end{equation}
Our aim now is to prove that the problem (\ref{Cauchy-u}) with $u_0
\in \mathcal{U}_{\sigma, \alpha_1}$ has a unique solution in
$\mathcal{U}_{\sigma, \alpha_2}$, where $\alpha_1 < \alpha_2$ are as
in Lemma \ref{B1lm}. To this end we first construct the semigroup
analogous to that obtained in Lemma \ref{1lm}. Thus, in the predual
space $\mathcal{G}_{\sigma, \alpha}$ equipped with the norm, cf.
(\ref{NoR}),
\begin{equation*}
  %\label{QQ6}
|G|_{\sigma,\alpha}:= \int_{\Gamma_0} |G(\eta)| \exp(\alpha|\eta|)
e(\varphi_\sigma;\eta) \lambda ( d\eta)
\end{equation*}
we define the action of $A^\sigma_b$ as follows, cf. (\ref{z1}),
\begin{eqnarray*}
  %\label{QQ7}
A^\sigma_b & = & A_{1,b} + A^\sigma_2\\[.2cm]
(A^\sigma_2 G)(\eta)& = & \int_{\mathbb{R}^d} \varphi_\sigma
(y)E^{+} (y,\eta) G(\eta\cup y) d y, \nonumber
\end{eqnarray*}
and $A_{1,b}$ acts as in (\ref{z1}). Then we have, cf. (\ref{z7}),
\begin{eqnarray*}
  %\label{QQ8}
& & |A^\sigma_2 G|_{\sigma,\alpha} \\[.2cm] & & \quad \leq \int_{\Gamma_0} \left(
\int_{\mathbb{R}^d}  \varphi_\sigma (y)E^{+} (y,\eta) |G(\eta\cup
y)| d y \right) \exp(\alpha|\eta|) e(\varphi_\sigma;\eta) \lambda (
d\eta) \nonumber  \\[.2cm]& &\quad =  \int_{\Gamma_0}e^{-\alpha}
\left(\sum_{x\in \eta} E^{+} (x, \eta\setminus x)\right) |G(\eta)|
\exp(\alpha|\eta|) e(\varphi_\sigma;\eta) \lambda (d\eta) \nonumber
\\[.2cm]
& & \quad \leq  (e^{-\alpha}/\vartheta) |A_{1,b} G|_{\sigma,\alpha}. \nonumber
\end{eqnarray*}
Now the existence of the substochastic semigroup $\{S_{\sigma,
\alpha}(t)\}_{t\geq0}$ generated by $(A^\sigma_b,
\mathcal{D}_{\sigma,\alpha})$ follows as in Lemma \ref{1lm}. Here,
cf. (\ref{IW1}),
\[
\mathcal{D}_{\sigma,\alpha} := \{G \in \mathcal{G}_{\sigma,\alpha}:
E_b(\cdot) G \in \mathcal{G}_{\sigma,\alpha}\}.
\]
Let $S^{\odot}_{\sigma, \alpha}(t)$ be the sun-dual to $S_{\sigma,
\alpha}(t)$, cf. (\ref{ACa}). Then for each $\alpha' <\alpha$ and
any $u\in \mathcal{U}_{\sigma, \alpha'}$, the map $$[0, +\infty)\ni
t \mapsto S^{\odot}_{\sigma, \alpha}(t)u \in \mathcal{U}_{\sigma,
\alpha}$$ is continuous, see Proposition \ref{1pn}. For real
$\alpha'< \alpha$ and $t>0$, let
$\Sigma^{\sigma,u}_{\alpha\alpha'}(t)$ be the restriction of
$S^{\odot}_{\sigma, \alpha}(t)$ to $\mathcal{U}_{\sigma, \alpha'}$.
Then the map
\[
[0, +\infty) \ni t \mapsto \Sigma^{\sigma,u}_{\alpha\alpha'}(t) \in
\mathcal{L}(\mathcal{U}_{\sigma,\alpha'},
\mathcal{U}_{\sigma,\alpha})
\]
is continuous and such that, cf. (\ref{norma}),
\begin{equation}
  \label{QQ9}
  \|\Sigma^{\sigma,u}_{\alpha\alpha'}(t) \| \leq 1, \qquad t\geq
  0.
\end{equation}
Now we define $(B^{\Delta,\sigma}_b)_{\alpha\alpha'}$ which acts
from $\mathcal{U}_{\sigma,\alpha'}$ to $\mathcal{U}_{\sigma,\alpha}$
according to (\ref{Qq22}) and (\ref{Qq23}). Then its norm satisfies
\begin{equation}
  \label{QQ10}
\|(B^{\Delta,\sigma}_b)_{\alpha\alpha'}\| \leq \frac{\langle a^{+}
\rangle + b+ \langle a^{-} \rangle e^{\alpha}}{e(\alpha - \alpha')}.
\end{equation}
In proving this we take into account that $\varphi_\sigma (x) \leq
1$ and repeat the arguments used in obtaining (\ref{bed4}).

For real $\alpha_2 > \alpha_1 > - \log \vartheta$, we take
$\alpha \in (\alpha_1, \alpha_2]$ and then pick  $\delta < \alpha - \alpha_1$
as in the proof of Lemma \ref{IN1lm}. Next, for
$l\in \mathbb{N}$ we
divide $[\alpha_1,\alpha]$ into
subintervals according to (\ref{z40}) and take $(t,t_1, \dots ,
t_l)\in \mathcal{T}_l$, see (\ref{cone}). Then define, cf.
(\ref{Iwo}),
\begin{eqnarray*}
 % \label{QQ11}
& & \Pi^{l,\sigma}_{\alpha \alpha_1} (t,t_1, \dots , t_l) =
\Sigma^{\sigma,u}_{\alpha\alpha^{2l}}(t-t_1)(B^{\Delta,\sigma}_b)_{\alpha^{2l}\alpha^{2l-1}} \\[.2cm]
& & \quad \times
\Sigma^{\sigma,u}_{\alpha^{2l-1}\alpha^{2l-2}}(t_1-t_2) \times
\cdots \times
\Sigma^{\sigma,u}_{\alpha^3\alpha^{2}}(t_{l-1}-t_l)(B^{\Delta,\sigma}_b)_{\alpha^{2}\alpha^{1}}
\Sigma^{\sigma,u}_{\alpha^1\alpha_1}(t_{l}). \nonumber
\end{eqnarray*}
Thereafter, for $n\in \mathbb{N}$ we set, cf. (\ref{KkN}),
\begin{eqnarray*}
  %\label{QQ12}
& & U^{(n)}_{\alpha\alpha_1}(t) =
\Sigma^{\sigma,u}_{\alpha\alpha_1}(t)
\\[.2cm] & & \quad + \sum_{l=1}^n \int_0^t \int_0^{t_1} \cdots
\int_0^{t_{n-1}} \Pi^{l,\sigma}_{\alpha \alpha_1} (t,t_1, \dots ,
t_l) d t_l \cdots d t_1. \nonumber
\end{eqnarray*}
By means of (\ref{QQ9}) and (\ref{QQ10}) we then prove that the
sequence $\{U^{(n)}_{\alpha\alpha_1}(t)\}_{n\in \mathbb{N}}$
converges in $\mathcal{L}(\mathcal{U}_{\sigma,\alpha_1},
\mathcal{U}_{\sigma,\alpha})$, uniformly on $[0,T]$, $T< T(\alpha,
\alpha_1;B^\Delta_b)$, see (\ref{akko}) and (\ref{IwN1}). The limit $
U_{\alpha\alpha_1}(t) \in \mathcal{L}(\mathcal{U}_{\sigma,\alpha_1},
\mathcal{U}_{\sigma,\alpha})$ has the property, cf. (\ref{bed7}),
\begin{equation*}
  %\label{QQ13}
\frac{d}{dt} U_{\alpha_2\alpha_1}(t) = \left(
(A^{\Delta,\sigma}_b)_{\alpha_2\alpha} +
(B^{\Delta,\sigma}_b)_{\alpha_2\alpha} \right)U_{\alpha\alpha_1}(t),
\end{equation*}
where $(A^{\Delta,\sigma}_b)_{\alpha_2\alpha}\in
\mathcal{L}(\mathcal{U}_{\sigma,\alpha},
\mathcal{U}_{\sigma,\alpha_2})$ is defined in (\ref{Qq22}) and
(\ref{Qq23}), analogously to (\ref{QQ10}). Note that
\begin{equation}
  \label{QQ14}
\forall u \in \mathcal{U}_{\sigma, \alpha} \quad
L^{\Delta,\sigma}_{\alpha_2,u} u = \left(
(A^{\Delta,\sigma}_b)_{\alpha_2\alpha} +
(B^{\Delta,\sigma}_b)_{\alpha_2\alpha} \right) u,
\end{equation}
see (\ref{QQ4}). Now we can state the following analog of Lemma
\ref{B1lm}.
\begin{lemma}
  \label{uQQlm}
Let $\alpha_2 > \alpha_1 > - \log \vartheta$ be as in Lemma
\ref{B1lm}. Then the problem (\ref{Cauchy-u}) with $u_0\in
\mathcal{U}_{\sigma, \alpha_1}$ has a unique solution $u_t \in
\mathcal{U}_{\sigma, \alpha_2}$ on the time interval $[0,
T(\alpha_2, \alpha_1;B^\Delta_b))$.
 \end{lemma}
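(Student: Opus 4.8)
The plan is to repeat, almost verbatim, the argument used for Lemma \ref{B1lm}, now working in the scale $\{\mathcal{U}_{\sigma,\alpha}\}$ and using the ingredients already assembled in this subsection: the contraction semigroups $\{\Sigma^{\sigma,u}_{\alpha\alpha'}(t)\}$, the bounded operators $(B^{\Delta,\sigma}_b)_{\alpha\alpha'}$ obeying (\ref{QQ10}), and their iterated composition $U_{\alpha\alpha_1}(t)$ together with the differential identity for $U_{\alpha_2\alpha_1}(t)$ derived just above.

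For existence, given $t< T(\alpha_2,\alpha_1;B^\Delta_b)$ I would choose $\alpha\in(\alpha_1,\alpha_2)$ with $t< T(\alpha,\alpha_1;B^\Delta_b)$ and set $u_t:=U_{\alpha\alpha_1}(t)u_0$. Since $U_{\alpha\alpha_1}(t)\in\mathcal{L}(\mathcal{U}_{\sigma,\alpha_1},\mathcal{U}_{\sigma,\alpha})$ and $\alpha<\alpha_2$, the inclusion $\mathcal{U}_{\sigma,\alpha}\subset\mathcal{D}^{\Delta,\sigma}_{\alpha_2,u}$ recorded before the lemma places $u_t$ in the domain; the differential identity for $U_{\alpha_2\alpha_1}$, combined with (\ref{QQ14}) and (\ref{QQ5}), then gives $\frac{d}{dt}u_t=L^{\Delta,\sigma}_{\alpha_2,u}u_t$ with the prescribed $u_0$, so $u_t$ solves (\ref{Cauchy-u}) in $\mathcal{U}_{\sigma,\alpha_2}$ on $[0,T(\alpha_2,\alpha_1;B^\Delta_b))$.

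For uniqueness, I would take any other solution $\tilde u_t$, put $v_t=u_t-\tilde u_t$ (so $v_0=0$), and note that $v_t$ solves (\ref{Cauchy-u}) also in any larger space $\mathcal{U}_{\sigma,\alpha_3}$, $\alpha_3>\alpha_2$, via the embeddings analogous to (\ref{z31}). Splitting $L^{\Delta,\sigma}_b=A^{\Delta,\sigma}_b+B^{\Delta,\sigma}_b$ and treating the $B^{\Delta,\sigma}_b$-part as a nonhomogeneous term relative to $\{\Sigma^{\sigma,u}_{\alpha\alpha'}(t)\}$, one obtains the Duhamel representation $v_t=\int_0^t\Sigma^{\sigma,u}_{\alpha_3\alpha}(t-s)(B^{\Delta,\sigma}_b)_{\alpha\alpha_2}v_s\,ds$ with $\alpha\in(\alpha_2,\alpha_3)$, exactly as in (\ref{fra1}); iterating it $n$ times along the partition $\alpha^l=\alpha_2+l(\alpha_3-\alpha_2)/(2n)$ and using (\ref{QQ9}) and (\ref{QQ10}) yields, as in (\ref{fraa1}), a bound on $\|v_t\|_{\sigma,\alpha_3}$ with prefactor $\frac{1}{n!}(n/e)^n\bigl(2t\,\beta(\alpha_3;B^\Delta_b)/(\alpha_3-\alpha_2)\bigr)^n\to0$, so $v_t=0$ on a short interval, and finitely many repetitions cover $[0,T(\alpha_2,\alpha_1;B^\Delta_b))$.

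The only step requiring genuine care --- though it is not really an obstacle --- is verifying that the cutoff $\varphi_\sigma$ does not shorten the existence time: since $0<\varphi_\sigma\le1$, the estimates (\ref{QQ9}) and (\ref{QQ10}) are uniform in $\sigma$ and have exactly the same shape as their $\sigma$-free counterparts, the factor $e^{\alpha}$ in (\ref{QQ10}) being consistent with the convention $\beta(\alpha;B^\Delta_b)=\langle a^{+}\rangle+b+\langle a^{-}\rangle e^{\alpha}$ of (\ref{IwN1}). Hence the admissible interval is again $[0,T(\alpha_2,\alpha_1;B^\Delta_b))$, and every estimate in the proof of Lemma \ref{B1lm} transfers with $\|\cdot\|_{\alpha}$ replaced by $\|\cdot\|_{\sigma,\alpha}$.
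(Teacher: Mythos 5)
Your proposal is correct and follows essentially the same route as the paper: the paper also takes $u_t := U_{\alpha\alpha_1}(t)u_0$ constructed from the iterated series, checks the equation via (\ref{QQ14}), and proves uniqueness by the literal repetition of the Duhamel-iteration argument from Lemma \ref{B1lm}, with (\ref{QQ9}) and (\ref{QQ10}) replacing their $\sigma$-free counterparts. Your closing remark that the bounds are uniform in $\sigma$ (since $0<\varphi_\sigma\le 1$) so the existence time is unchanged is exactly the point the paper relies on.
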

\begin{proof}
Fix $T<T(\alpha_2, \alpha_1;B^\Delta_b)$ and find $\alpha\in
(\alpha_1, \alpha_2)$ such that also $T<T(\alpha',
\alpha_1;B^\Delta_b)$. Then, cf. (\ref{fra}),
\begin{equation}
  \label{QQ15}
 u_t := U_{\alpha \alpha_1}(t) u_0
\end{equation}
is the solution in question, which can be checked by means of
(\ref{QQ14}). Its uniqueness can be proved by the literal repetition of
the corresponding arguments used in the proof of Lemma \ref{B1lm}.
\end{proof}
\begin{corollary}
  \label{uQQco}
Let $k_t$ be the solution of the problem (\ref{Cauchy-sigma}) with
$k_0 \in \mathcal{U}_{\sigma,\alpha_1}$ mentioned in Lemma
\ref{sQQlm}. Then $k_t$ coincides with the solution mentioned in
Lemma \ref{uQQlm}.
\end{corollary}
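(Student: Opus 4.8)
The plan is to exploit the continuous embedding $\mathcal{U}_{\sigma,\alpha_2}\hookrightarrow \mathcal{K}_{\alpha_2}$ together with the uniqueness asserted in Lemma \ref{sQQlm}. First I would let $u_t$ denote the solution of (\ref{Cauchy-u}) in $\mathcal{U}_{\sigma,\alpha_2}$ produced in Lemma \ref{uQQlm} with initial datum $u_0=k_0\in\mathcal{U}_{\sigma,\alpha_1}$ (see (\ref{QQ15})), so that $t\mapsto u_t$ is a continuous map of $[0,T(\alpha_2,\alpha_1;B^\Delta_b))$ into $\mathcal{D}^{\Delta,\sigma}_{\alpha_2,u}$, continuously differentiable in $\mathcal{U}_{\sigma,\alpha_2}$, with $\frac{d}{dt}u_t=L^{\Delta,\sigma}_{\alpha_2,u}u_t$. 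Since $k_0\in\mathcal{U}_{\sigma,\alpha_1}\subset\mathcal{K}_{\alpha_1}$, Lemma \ref{sQQlm} also applies to this $k_0$ and yields $k_t=Q^\sigma_{\alpha\alpha_1}(t;B^{\Delta,\sigma}_b)k_0$.

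The main step is to check that, after the embedding, $t\mapsto u_t$ is also a solution of (\ref{Cauchy-sigma}) in $\mathcal{K}_{\alpha_2}$ with the same initial datum $k_0$. Because $\mathcal{U}_{\sigma,\alpha_2}\hookrightarrow\mathcal{K}_{\alpha_2}$ is continuous, the continuity of $t\mapsto u_t$ and its continuous differentiability transfer from $\mathcal{U}_{\sigma,\alpha_2}$ to $\mathcal{K}_{\alpha_2}$, the $\mathcal{K}_{\alpha_2}$-derivative being the image of the $\mathcal{U}_{\sigma,\alpha_2}$-derivative. Next, $u_t\in\mathcal{U}_{\sigma,\alpha_2}$ together with $L^{\Delta,\sigma}u_t\in\mathcal{U}_{\sigma,\alpha_2}\subset\mathcal{K}_{\alpha_2}$ gives $u_t\in\mathcal{D}^{\Delta,\sigma}_{\alpha_2}$ by the definition (\ref{QQ1}), and by (\ref{QQ5}) the operator $L^{\Delta,\sigma}_{\alpha_2,u}$ is the restriction of $L^{\Delta,\sigma}_{\alpha_2}$ to $\mathcal{D}^{\Delta,\sigma}_{\alpha_2,u}$, so that $\frac{d}{dt}u_t=L^{\Delta,\sigma}_{\alpha_2}u_t$ holds in $\mathcal{K}_{\alpha_2}$. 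Together with $u_0=k_0$ this shows that $t\mapsto u_t$ solves (\ref{Cauchy-sigma}) in $\mathcal{K}_{\alpha_2}$ on the stated interval.

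Finally, by the uniqueness part of Lemma \ref{sQQlm} the solution of (\ref{Cauchy-sigma}) in $\mathcal{K}_{\alpha_2}$ with initial condition $k_0$ is unique and equals $k_t$; hence $u_t=k_t$ on the common interval of existence, which is the assertion of the corollary. I do not expect a genuine obstacle here: this is the same compatibility-across-scales mechanism already used to relate $L^\Delta_{\alpha\alpha'}$ with $L^\Delta_\alpha$ in (\ref{staS}) and to argue uniqueness in Lemma \ref{B1lm}. The only points needing (routine) care are the verification that $u_t$ lands in the larger domain $\mathcal{D}^{\Delta,\sigma}_{\alpha_2}$ and that the solution regularity genuinely passes through the continuous embedding $\mathcal{U}_{\sigma,\alpha_2}\hookrightarrow\mathcal{K}_{\alpha_2}$.
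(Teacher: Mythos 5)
Your argument is correct and is essentially the paper's own proof: the paper likewise uses the extension relation (\ref{QQ5}) together with the continuous embedding $\mathcal{U}_{\sigma,\alpha}\hookrightarrow\mathcal{K}_\alpha$ to conclude that the solution (\ref{QQ15}) with $u_0=k_0$ also solves (\ref{Cauchy-sigma}), and then invokes the uniqueness in Lemma \ref{sQQlm}. Your write-up merely makes explicit the routine verifications (transfer of regularity through the embedding and membership in $\mathcal{D}^{\Delta,\sigma}_{\alpha_2}$) that the paper leaves implicit.
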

\begin{proof}
Since $(L^{\Delta,\sigma}_{\alpha},
\mathcal{D}^{\Delta,\sigma}_{\alpha})$ is an extension of
$(L^{\Delta,\sigma}_{\alpha,u},
\mathcal{D}^{\Delta,\sigma}_{\alpha,u})$, see (\ref{QQ5}), and the
embedding $\mathcal{U}_{\sigma,\alpha}\hookrightarrow
\mathcal{K}_\alpha$ is continuous, the solution as in (\ref{QQ15})
with $u_0= k_0$ satisfies also (\ref{Cauchy-sigma}), and hence
coincides with $k_t$ in view of the uniqueness stated in Lemma
\ref{sQQlm}.
\end{proof}

\subsubsection{The evolution in $\mathcal{G}_{\omega}$}
We recall that the space $\mathcal{G}_{\alpha}$ was introduced in
(\ref{z3}), (\ref{NoR}), where we used it as a predual space to
$\mathcal{K}_{\alpha}$. Now we employ
$\mathcal{G}_{\alpha}$ to get the positive definiteness mentioned at
the beginning of this subsection. Here, however, we write
$\mathcal{G}_{\omega}$ to show that we use it not as a predual
space.

Let $L^{\Delta, \sigma}$ be as in (\ref{Qq21}). For
$\omega\in \mathbb{R}$, we set, cf. (\ref{QQ1}) and (\ref{QQ4}),
\begin{equation*}
 %\label{QQ16}
\mathcal{D}^{\Delta,\sigma}_\omega = \{ q \in \mathcal{G}_\omega:
L^{\Delta,\sigma}q \in \mathcal{G}_\omega\}.
\end{equation*}
Then we define the corresponding operator
$(L^{\Delta,\sigma}_\omega, \mathcal{D}^{\Delta,\sigma}_\omega)$ and
consider the following Cauchy problem
\begin{equation}
 \label{Cauchy-omega}
 \frac{d}{dt} q_t = L^{\Delta,\sigma}_\omega q_t , \qquad q_t|_{t=0} = q_0 \in \mathcal{D}^{\Delta,\sigma}_\omega.
\end{equation}
As above, one can show that $\mathcal{G}_{\omega'} \subset \mathcal{D}^{\Delta,\sigma}_\omega$ for each $\omega' > \omega$.
By (\ref{Qq31}) and (\ref{NoR}) for $u\in \mathcal{U}_{\sigma,
\alpha}$ we have
\begin{eqnarray}
 \label{QQ18}
 |u|_\omega & \leq & \|u\|_{\sigma,\alpha} \int_{\Gamma_0} \exp((\omega + \alpha)|\eta|)e(\varphi_\sigma;\eta) \lambda (d\eta)\\[.2cm]
& \leq & \|u\|_{\sigma,\alpha} \exp\left( \bar{\varphi}_\sigma
e^{\omega + \alpha}\right), \nonumber
\end{eqnarray}
see also (\ref{Be}). Hence $\mathcal{U}_{\sigma, \alpha}
\hookrightarrow \mathcal{G}_\omega$ for each $\omega$ and $\alpha$.
Like in (\ref{QQ5}) we then get
\begin{equation}
  \label{QQ19}
(L^{\Delta,\sigma}_{\alpha,u},
\mathcal{D}^{\Delta,\sigma}_{\alpha,u}) \subset
(L^{\Delta,\sigma}_{\omega}, \mathcal{D}^{\Delta,\sigma}_{\omega}).
\end{equation}
\begin{lemma}
 \label{qQQlm}
Assume that the problem (\ref{Cauchy-omega}) with $\omega > 0$ and
$q_0 \in \mathcal{G}_{\omega'}$, $\omega'> \omega$, has a solution,
$q_t\in \mathcal{G}_\omega$, on some time interval $[0, T(\omega',
\omega))$. Then this solution is unique.
\end{lemma}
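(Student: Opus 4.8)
\noindent\emph{Plan of the proof.}
As in the uniqueness part of Lemma~\ref{B1lm}, let $q_t$ and $\tilde q_t$ be two solutions of (\ref{Cauchy-omega}) on $[0,T(\omega',\omega))$ with the same $q_0\in\mathcal{G}_{\omega'}$, and put $v_t:=q_t-\tilde q_t$, so that $v_t\in\mathcal{D}^{\Delta,\sigma}_\omega$, $v_0=0$ and $\frac{d}{dt}v_t=L^{\Delta,\sigma}v_t$ in $\mathcal{G}_\omega$. I would show $v_t\equiv0$ by a Duhamel formula followed by the iteration of Lemma~\ref{B1lm}; since the embedding $\mathcal{G}_\omega\hookrightarrow\mathcal{G}_{\omega_3}$ (for $\omega_3<\omega$) is injective, it suffices to prove $v_t=0$ as an element of $\mathcal{G}_{\omega_3}$ for some $\omega_3\in(0,\omega)$. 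The delicate point — which I expect to be the main obstacle — is the choice of the splitting $L^{\Delta,\sigma}=\mathcal{A}+\mathcal{B}_\sigma$ used for that Duhamel formula. In the scale $\{\mathcal{K}_\alpha\}$ one may put all of $B^\Delta$ into the bounded perturbation, cf.\ Lemma~\ref{B1lm}; but in the $L^1$-type scale $\{\mathcal{G}_\omega\}$ the operator $B^\Delta_1$ of (\ref{23A}) maps $\mathcal{G}_{\omega'}$ into $\mathcal{G}_{\omega''}$ (for $\omega'>\omega''$) only with a norm of order $(\omega'-\omega'')^{-2}$ — by (\ref{12A}) its $\mathcal{G}_{\omega''}$-norm is bounded by that of $q\mapsto e^{-\omega''}E^{-}(\cdot)q(\cdot)$, and $E^{-}(\eta)$ is of order $|\eta|^2$ — which is too weak for the iteration. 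Hence $B^\Delta_1$ has to be absorbed into the free part, and I would take $\mathcal{A}:=A^\Delta_{1,b}+B^\Delta_1$ (the correlation operator of the pure death-with-competition dynamics, independent of $\sigma$) and $\mathcal{B}_\sigma:=A^{\Delta,\sigma}_2+B^{\Delta,\sigma}_{2,b}=L^{\Delta,\sigma}-\mathcal{A}$, see (\ref{Qq21})--(\ref{Qq23}); by the estimates behind (\ref{bed4}) and (\ref{Iw}), together with $\varphi_\sigma\le1$, the latter acts continuously $\mathcal{G}_{\omega'}\to\mathcal{G}_{\omega''}$ with $\|(\mathcal{B}_\sigma)_{\omega''\omega'}\|\le\kappa(\omega')/\bigl(e(\omega'-\omega'')\bigr)$, where $\kappa(\omega')=(1+e^{\omega'})\langle a^{+}\rangle+b$ — the linear bound needed below.

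The second step is to prove that, for every $\omega>0$, the operator $\mathcal{A}$ with domain $\{q\in\mathcal{G}_\omega:E_b(\cdot)q\in\mathcal{G}_\omega\}$ generates a contraction $C_0$-semigroup $\{U_\omega(t)\}_{t\ge0}$ in $\mathcal{G}_\omega$; this is precisely where the hypothesis $\omega>0$ is used. Dissipativity follows since, using (\ref{12A}) to rewrite the $B^\Delta_1$-contribution and $E_b\ge E^{-}$,
\[
\int_{\Gamma_0}\mathrm{sgn}\bigl(q(\eta)\bigr)\,(\mathcal{A} q)(\eta)\,e^{\omega|\eta|}\,\lambda(d\eta)\ \le\ -\bigl(1-e^{-\omega}\bigr)\int_{\Gamma_0}E_b(\eta)\,|q(\eta)|\,e^{\omega|\eta|}\,\lambda(d\eta)\ \le\ 0 .
\]
For the range condition one writes $\lambda-\mathcal{A}=(\lambda-A^\Delta_{1,b})\bigl(I-B^\Delta_1R_\lambda\bigr)$ with $R_\lambda:=(\lambda-A^\Delta_{1,b})^{-1}$ (multiplication by $(\lambda+E_b(\cdot))^{-1}$, a contraction on $\mathcal{G}_\omega$ for $\lambda>0$), and obtains, again from (\ref{12A}),
\[
\|B^\Delta_1R_\lambda h\|_\omega\ \le\ e^{-\omega}\int_{\Gamma_0}\frac{E^{-}(\eta)}{\lambda+E_b(\eta)}\,|h(\eta)|\,e^{\omega|\eta|}\,\lambda(d\eta)\ \le\ e^{-\omega}\,\|h\|_\omega ,
\]
so that $I-B^\Delta_1R_\lambda$ is boundedly invertible on $\mathcal{G}_\omega$ and hence $\mathrm{Ran}(\lambda-\mathcal{A})=\mathcal{G}_\omega$. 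By the Lumer--Phillips theorem (see \cite{Pazy}) $\mathcal{A}$ generates $\{U_\omega(t)\}_{t\ge0}$.

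The third step repeats the iteration of Lemma~\ref{B1lm}. Since $\mathcal{B}_\sigma v_t\in\mathcal{G}_{\omega_2}$ for any $\omega_2<\omega$ while $L^{\Delta,\sigma}v_t\in\mathcal{G}_\omega\subset\mathcal{G}_{\omega_2}$, the function $v_t$, regarded in $\mathcal{G}_{\omega_2}$, lies in the domain of the generator $\mathcal{A}_{\omega_2}$ and solves $\frac{d}{dt}v_t=\mathcal{A}_{\omega_2}v_t+\mathcal{B}_\sigma v_t$ with $v_0=0$, so $v_t=\int_0^t U_{\omega_2}(t-s)\,\mathcal{B}_\sigma v_s\,ds$. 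Fixing $\omega_3\in(0,\omega)$, $n\in\mathbb{N}$, $\epsilon=(\omega-\omega_3)/2n$ and $\omega^{(l)}=\omega_3+l\epsilon$ ($l=0,\dots,2n$; all $\omega^{(l)}>0$, so the semigroups $U_{\omega^{(l)}}$ are available), and iterating this identity $n$ times along the sub-intervals — with $U_{\omega^{(2s)}}$ restricted to $\mathcal{G}_{\omega^{(2s+1)}}$ in the role of the operators $\Sigma$ in (\ref{Iwo}) — the contraction bound $\|U\|\le1$ and the linear estimate for $\mathcal{B}_\sigma$ give, exactly as in (\ref{Iwv})--(\ref{fraa1}),
\[
\|v_t\|_{\omega_3}\ \le\ \frac{t^{\,n}}{n!}\Bigl(\frac{2n\,\kappa(\omega)}{e(\omega-\omega_3)}\Bigr)^{\!n}\sup_{0\le s\le t}\|v_s\|_\omega\ \le\ \Bigl(\frac{2t\,\kappa(\omega)}{\omega-\omega_3}\Bigr)^{\!n}\sup_{0\le s\le t}\|v_s\|_\omega .
\]
Letting $n\to\infty$ gives $v_t=0$ in $\mathcal{G}_{\omega_3}$, hence in $\mathcal{G}_\omega$, whenever $t<(\omega-\omega_3)/\bigl(2\kappa(\omega)\bigr)$; since $\omega_3\in(0,\omega)$ is arbitrary and the equation may be restarted from any time at which $v$ vanishes, this extends to the whole interval $[0,T(\omega',\omega))$, which proves the claimed uniqueness.
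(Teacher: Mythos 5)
Your proposal is correct in substance, but it handles the crucial generation step by a genuinely different route than the paper. You rightly identify the obstacle: in the scale $\{\mathcal{G}_\omega\}$ the operator $B^\Delta_1$ cannot serve as the bounded perturbation (its norm blows up like $(\omega'-\omega'')^{-2}$ since $E^{-}(\eta)$ grows like $|\eta|^2$), so it must be absorbed into the free part, leaving only the birth-type terms, with a bound linear in $(\omega'-\omega'')^{-1}$ and constant of order $(1+e^{\omega'})\langle a^{+}\rangle$, as the perturbation -- this is exactly the splitting the paper works with. The difference lies in how the semigroup generated by $A^\Delta_{1,b}+B^\Delta_1$ is produced: the paper first conjugates by the isometry $q\mapsto(-1)^{|\cdot|}q$, which flips the sign of the $B^\Delta_1$-term and turns the free part into the positive-perturbation problem (\ref{Qq14}), to which the Thieme--Voigt criterion of Proposition \ref{le:substoch} (already used in Lemma \ref{1lm}) applies and yields the substochastic semigroup $\{V_\omega(t)\}$ entering (\ref{Qq17}); you instead keep the unflipped operator and invoke Lumer--Phillips, checking $L^1$-dissipativity with the sign functional and the range condition through the Neumann series for $I-B^\Delta_1R_\lambda$ with $|B^\Delta_1R_\lambda h|_\omega\le e^{-\omega}|h|_\omega$. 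Both arguments use the hypothesis $\omega>0$ at exactly the same point ($e^{-\omega}<1$), and your concluding Duhamel--Ovcyannikov iteration, with the perturbation bound playing the role of (\ref{Qq16}), and the restart argument coincide with the paper's (and with Lemma \ref{B1lm}). Your route is self-contained and avoids the positivity machinery; the paper's is shorter because it recycles Proposition \ref{le:substoch}, and the sign flip keeps the bounded perturbation free of the competition kernel altogether.

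One small repair is needed in your range-condition step: the factorization must read $\lambda-\mathcal{A}=(I-B^\Delta_1R_\lambda)(\lambda-A^\Delta_{1,b})$ rather than $(\lambda-A^\Delta_{1,b})(I-B^\Delta_1R_\lambda)$, since the multiplication operator does not commute with $B^\Delta_1$; the other admissible order, $(\lambda-A^\Delta_{1,b})(I-R_\lambda B^\Delta_1)$, would require a bound on $R_\lambda B^\Delta_1$ that your estimate does not provide, because there the weight $(\lambda+E_b(\eta))^{-1}$ sits at the smaller configuration and the ratio $\sum_{x\in\eta}E^{-}(x,\eta\setminus x)/(\lambda+E_b(\eta\setminus x))$ need not be bounded by $1$. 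With the corrected order, your estimate (which does place the resolvent at the larger configuration, so that $E^{-}(\eta)/(\lambda+E_b(\eta))\le1$) gives surjectivity of $\lambda-\mathcal{A}$ exactly as you intend, and the rest of the argument stands.
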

\begin{proof}
 Set
\begin{equation*}
  %\label{Qq13}
   w_t(\eta) = (-1)^{|\eta|} q_t (\eta),
\end{equation*}
which is an isometry on $\mathcal{G}_\omega$. Then $q_t$ solves
(\ref{Cauchy-omega}) if and only if $w_t$ solves the following
equation
\begin{gather}
  \label{Qq14}
\frac{d}{dt} w_t(\eta) = - E(\eta) w_t(\eta) + \int_{\mathbb{R}^d}
E^{-}(y,\eta) w_t(\eta\cup y)d y\\[.2cm]
- \sum_{x\in \eta} \varphi_\sigma(x)  E^{+} (x, \eta \setminus x)
w_t (\eta \setminus
x) \nonumber \\[.2cm] + \int_{\mathbb{R}^d}\sum_{x\in \eta}  \varphi_\sigma(x)  a^{+} (x-y) w_t (x\setminus
x \cup y) dy. \nonumber
\end{gather}
Set
\[
 \mathcal{D}_\omega =\{ w \in \mathcal{G}_\omega : E(\cdot) w \in \mathcal{G}_\omega\}.
\]
By Proposition \ref{le:substoch} we prove that the operator defined
by the  first two summands in (\ref{Qq14}) with domain $
\mathcal{D}_\omega$ generates a substochastic semigroup,
$\{V_\omega(t)\}_{t\geq 0}$, acting in $\mathcal{G}_\omega$. Indeed, in this
case the condition analogous to that in (\ref{cond:substoch}) takes
the form, cf. (\ref{Qq15}),
\begin{eqnarray*}
& - & \int_{\Gamma_0} E(\eta) w(\eta) \exp(\omega |\eta|) \lambda (
d\eta)\\[.2cm] & & \qquad + r^{-1}e^{-\omega} \int_{\Gamma_0} E^{-}(\eta) w(\eta)
\exp(\omega |\eta|) \lambda ( d\eta) \leq 0,
\end{eqnarray*}
which certainly holds for each $\omega>0$ and an appropriate $r<1$.
For each $\omega'' \in (0, \omega)$,  we have that
$\mathcal{G}_\omega \hookrightarrow \mathcal{G}_{\omega''}$, and the
second two  summands in (\ref{Qq14}) define a bounded operator,
$W_{\omega''\omega}: \mathcal{G}_\omega \to \mathcal{G}_{\omega''}$,
the norm of which can be estimated as follows, cf. (\ref{Cc2}),
\begin{equation}
  \label{Qq16}
  \|W_{\omega''\omega} \| \leq \frac{(e^{\omega} +1) \langle a^{+}
  \rangle}{e(\omega - \omega'')}.
\end{equation}
Assume now that (\ref{Qq14}) has two solutions corresponding to the
same initial condition $w_0 (\eta) = (-1)^{|\eta|} q_0(\eta)$. Let
$v_t$ be their difference. Then it solves the following equation,
cf. (\ref{fra1}),
\begin{equation}
  \label{Qq17}
 v_t = \int_0^t V_{\omega''}(t-s) W_{\omega''\omega} v_s d s,
\end{equation}
where $v_t$ on the left-hand side is considered as an element of
$\mathcal{G}_{\omega''}$ and $t>0$ will be chosen later. Now for a
given $n\in \mathbb{N}$, we set $\epsilon = (\omega - \omega'')/n$
and then $\omega^l := \omega - l \epsilon$, $l=0, \dots , n$. Thereafter, we
iterate (\ref{Qq17}) and get
\begin{eqnarray*}
v_t & = & \int_0^t \int_0^{t_1} \cdots \int_0^{t_{n-1}}
V_{\omega''}(t-t_1) W_{\omega''\omega^{n-1}} V_{\omega^{n-1}}
(t_1-t_2) \times \cdots \times \\[.2cm] & \times &
W_{\omega^{2}\omega^{1}} V_{\omega^1} (t_{n-1} - t_n) W_{\omega^{1}
\omega} v_{t_n} d t_n \cdots d t_1.
\end{eqnarray*}
Similarly as in (\ref{fraa1}), by (\ref{Qq16}) this yields the
following estimate
\begin{eqnarray*}
|v_t|_{\omega''} \leq \frac{1}{n!} \left( \frac{n}{e}\right)^n
\left( \frac{t\langle a^{+} \rangle (e^{\omega} +1)}{\omega -
\omega''}\right)^n \sup_{s\in [0,t]} |v_s|_{\omega}.
\end{eqnarray*}
The latter implies that $v_t =0$ for $t < (\omega - \omega'')/
\langle a^{+} \rangle (e^{\omega} +1)$. To prove that $v_t =0$ for
all $t$ of interest one has to repeat the above procedure
appropriate number of times.
\end{proof}
\vskip.1cm Recall that each $\mathcal{U}_{\sigma, \alpha}$ is
continuously embedded into each $\mathcal{G}_\omega$, see
(\ref{QQ18}).
\begin{corollary}
 \label{qQQco}
For each $\omega>0$, the problem (\ref{Cauchy-omega}) with $q_0 \in
\mathcal{U}_{\sigma, \alpha_0}$ has a unique solution $q_t$ which
coincides with the solution $u_t\in \mathcal{U}_{\sigma, \alpha}$
mentioned in Lemma \ref{uQQlm}.
\end{corollary}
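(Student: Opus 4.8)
The plan is to recognise this corollary as a bookkeeping consequence of Lemma \ref{uQQlm} and Lemma \ref{qQQlm}: the solution $u_t$ built in the $\mathcal{U}$-scale, read off through the continuous embedding $\mathcal{U}_{\sigma,\alpha}\hookrightarrow\mathcal{G}_\omega$, is itself a solution of (\ref{Cauchy-omega}), and Lemma \ref{qQQlm} then forces every solution of (\ref{Cauchy-omega}) to coincide with it.

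First I would fix the indices. Since $q_0\in\mathcal{U}_{\sigma,\alpha_0}$ with $\alpha_0>-\log\vartheta$, choose $\alpha_2>\alpha_1:=\alpha_0$ and $\alpha\in(\alpha_0,\alpha_2)$ as in Lemma \ref{uQQlm}, and let $u_t=U_{\alpha\alpha_0}(t)q_0\in\mathcal{U}_{\sigma,\alpha_2}$ be the unique solution of (\ref{Cauchy-u}) on $[0,T(\alpha_2,\alpha_0;B^\Delta_b))$ provided by that lemma; in particular $t\mapsto u_t$ is continuously differentiable in $\mathcal{U}_{\sigma,\alpha_2}$, $u_t\in\mathcal{D}^{\Delta,\sigma}_{\alpha,u}$, and $\tfrac{d}{dt}u_t=L^{\Delta,\sigma}_{\alpha,u}u_t$. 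Note also that $q_0$ is a legitimate initial datum for (\ref{Cauchy-omega}): by (\ref{QQ18}) one has $\mathcal{U}_{\sigma,\alpha_0}\hookrightarrow\mathcal{G}_{\omega'}$ for every real $\omega'$, and $\mathcal{G}_{\omega'}\subset\mathcal{D}^{\Delta,\sigma}_\omega$ for $\omega'>\omega$.

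Next I would transfer the solution to $\mathcal{G}_\omega$. By (\ref{QQ18}) the embedding $\mathcal{U}_{\sigma,\alpha_2}\hookrightarrow\mathcal{G}_\omega$ is continuous, so $t\mapsto u_t$ is continuously differentiable as a map into $\mathcal{G}_\omega$ with derivative the image of $\tfrac{d}{dt}u_t$; moreover, by the operator inclusion (\ref{QQ19}) we get $u_t\in\mathcal{D}^{\Delta,\sigma}_\omega$ and $L^{\Delta,\sigma}_\omega u_t=L^{\Delta,\sigma}_{\alpha,u}u_t$. Hence $t\mapsto u_t$ solves (\ref{Cauchy-omega}) with $q_0$ as initial condition, which settles existence. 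For uniqueness I would pick $\omega'>\omega$ with $q_0\in\mathcal{G}_{\omega'}$ (possible by (\ref{QQ18})) and apply Lemma \ref{qQQlm}; since its proof is a localisation-and-iteration estimate valid on any interval on which two $\mathcal{G}_\omega$-solutions coexist, every solution of (\ref{Cauchy-omega}) with this datum agrees with $u_t$ on $[0,T(\alpha_2,\alpha_0;B^\Delta_b))$. Therefore $q_t=u_t$, as claimed.

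The only delicate point is the compatibility of the three functional-analytic settings — that continuous differentiability and the action of the generator carry over correctly along the chain $\mathcal{U}_{\sigma,\alpha}\hookrightarrow\mathcal{K}_\alpha$ and $\mathcal{U}_{\sigma,\alpha}\hookrightarrow\mathcal{G}_\omega$ — but this is immediate from (\ref{QQ5}), (\ref{QQ19}) and (\ref{QQ18}), so no genuine obstacle arises; the corollary follows by combining Lemmas \ref{uQQlm} and \ref{qQQlm}.
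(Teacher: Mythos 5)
Your proposal is correct and follows the same route as the paper's own (very brief) proof: the operator inclusion (\ref{QQ19}) together with the embedding (\ref{QQ18}) shows that $u_t$ from Lemma \ref{uQQlm} solves (\ref{Cauchy-omega}), and uniqueness is then supplied by Lemma \ref{qQQlm}. Your version merely spells out the index choices and the transfer of continuous differentiability along the embeddings, which the paper leaves implicit.
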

\begin{proof}
By (\ref{QQ19}) $u_t$ is a solution of (\ref{Cauchy-omega}). Its
uniqueness follows by Lemma \ref{qQQlm}.
\end{proof}

\subsection{Local evolution}

\label{SS5.3}

In this subsection we pass to the so called local evolution of
states of the auxiliary model (\ref{Qq21}), (\ref{Qq22}). For this evolution,
the corresponding `correlation function'
 $q_t \in \mathcal{G}_\omega$ has the
positive definiteness in question. Then we apply Corollaries
\ref{uQQco} and \ref{qQQco} to get the same for the evolution in
$\mathcal{K}_\alpha$. Thereafter, we pass to the limit and get the
proof of Lemma \ref{Wolflm}.

\subsubsection{The evolution of densities}
In view of (\ref{Feb1}), each state with the property
$\mu(\Gamma_0)=1$ can be redefined as a probability measure on
$\mathcal{B}(\Gamma_0)$, cf. Remark \ref{Febrk1}. Then the
Fokker-Planck equation (\ref{R1}) can be studied directly, see
\cite[Eq. (2.8)]{K}. Its solvability is described in \cite[Theorem
2.2]{K}, which, in particular, states that the solution is
absolutely continuous with respect to the Lebesgue-Poisson measure
$\lambda$ if $\mu_0$ has the same property. In view of this we write
the corresponding problem for the density
\begin{equation}
  \label{Ap4}
  R_t := \frac{d \mu_t}{d \lambda},
\end{equation}
see also \cite[Eq. (2.16)]{K}, and obtain
\begin{equation}
  \label{Rr}
\frac{d}{dt} R_t (\eta)  = (L^{\dagger,\sigma} R_t)(\eta), \quad
R_t|_{t=0} = R_0,
\end{equation}
where
\begin{eqnarray}
  \label{Ap5}
(L^{\dagger,\sigma} R)(\eta) & := & - \Psi_\sigma (\eta) R (\eta) +
\sum_{x\in
  \eta} \varphi_\sigma (x) E^{+} (x, \eta \setminus x) R_t (\eta \setminus x) \qquad  \\[.2cm]
 & + & \int_{\mathbb{R}^d} \left( m + E^{-} (x, \eta)\right) R_t
(\eta\cup x) d x, \nonumber
\end{eqnarray}
and
\begin{equation*}
 %\label{App3}
\Psi_\sigma (\eta)= E(\eta) + \int_{\mathbb{R}^d} \varphi_\sigma (x)
E^{+} (x, \eta) dx.
\end{equation*}
We solve (\ref{Rr}) in the Banach spaces $\mathcal{G}_0 =
L^1(\Gamma_0, d\lambda)$, cf. (\ref{z3}). For $n\in \mathbb{N}$ we
denote by
 $\mathcal{G}_{0,n}$ the subset of  $\mathcal{G}_0$ consisting of
 all those $R:\Gamma_0 \to \mathbb{R}$ for which
 \[
\int_{\mathbb{R}^d} |\eta|^n \left\vert R(\eta)\right\vert \lambda (
d\eta) < \infty.
 \]
Let also  $\mathcal{G}_\omega^+$ stand for the cone of positive
elements of  $\mathcal{G}_\omega$. Set
\begin{equation}
  \label{Ap6}
 \mathcal{D}_0 = \{ R\in \mathcal{G}_0: \Psi_\sigma R \in \mathcal{G}_0\}.
\end{equation}
Then the relevant part of \cite[Theorem 2.2]{K} can be formulated as
follows.
\begin{proposition}
  \label{Ap1pn}
The closure in $\mathcal{G}_0$ of the operator $(L^{\dagger, \sigma},
\mathcal{D}_0)$ defined in (\ref{Ap5}) and (\ref{Ap6}) generates a
stochastic semigroup $\{S^{\dagger,\sigma}(t)\}_{t\geq 0}:=
S^{\dagger,\sigma}$ of bounded operators in  $\mathcal{G}_0$, which
leaves invariant each $\mathcal{G}_{0,n}$, $n\in \mathbb{N}$.
Moreover, for each $\beta'>0$ and $\beta \in (0, \beta')$, $R\in
\mathcal{G}_{\beta'}^{ +}$ implies $S^{\dagger,\sigma} (t) R\in
\mathcal{G}_{\beta}^{ +}$ holding for all $t < T(\beta' , \beta)$,
where $T(\beta' , \beta)= +\infty $ for $\langle a^{+}\rangle = 0$,
and
\begin{equation}
 \label{Qq5}
 T(\beta' , \beta ) = (\beta' - \beta)e^{-\beta'}/\langle a^{+}\rangle, \qquad {\rm for} \ \ \langle a^{+}\rangle>0.
\end{equation}
\end{proposition}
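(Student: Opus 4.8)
The statement is a specialization of \cite[Theorem 2.2]{K}; I sketch how I would establish it. Decompose $L^{\dagger,\sigma}=L_0+L_1$, where $(L_0R)(\eta)=-\Psi_\sigma(\eta)R(\eta)$ is the diagonal (``escape'') part, with domain $\mathcal D_0$ as in (\ref{Ap6}), and $L_1=B_++D$ collects the two gain terms of (\ref{Ap5}), $(B_+R)(\eta)=\sum_{x\in\eta}\varphi_\sigma(x)E^{+}(x,\eta\setminus x)R(\eta\setminus x)$ (birth) and $(DR)(\eta)=\int_{\mathbb R^d}(m+E^{-}(x,\eta))R(\eta\cup x)\,dx$ (death). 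On $\mathcal G_0$, $L_0$ generates the multiplication semigroup $(e^{-t\Psi_\sigma(\cdot)})_{t\ge0}$, which is substochastic since $\Psi_\sigma\ge0$ and whose domain $\mathcal D_0$ is dense, while $L_1$ is positive on $\mathcal D_0$. Using the integration rule (\ref{12A}) one finds $\int_{\Gamma_0}(L_0+L_1)R\,d\lambda=\int_{\Gamma_0}\bigl(-\Psi_\sigma(\eta)+E(\eta)+\int_{\mathbb R^d}\varphi_\sigma(x)E^{+}(x,\eta)\,dx\bigr)R(\eta)\,\lambda(d\eta)=0$ for $R\in\mathcal D_0$, so Proposition \ref{le:substoch} applies: $(L_0+rL_1,\mathcal D_0)$ generates a substochastic $C_0$-semigroup for each $r\in[0,1)$. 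Passing to $r=1$ via the Thieme--Voigt extension theorem of \cite{TV} then produces a substochastic $C_0$-semigroup $S^{\dagger,\sigma}$ whose generator is the closure of $(L^{\dagger,\sigma},\mathcal D_0)$.

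For the invariance of $\mathcal G_{0,n}$ --- and, in the case $n=1$, for the fact that $S^{\dagger,\sigma}$ is actually \emph{stochastic} --- I would use a moment estimate. For nonnegative $R\in\mathcal D_0$, the rule (\ref{12A}) gives $\int_{\Gamma_0}|\eta|^{n}(L^{\dagger,\sigma}R)(\eta)\,\lambda(d\eta)=\int_{\Gamma_0}\bigl[(|\eta|-1)^{n}-|\eta|^{n}\bigr]E(\eta)R(\eta)\,\lambda(d\eta)+\int_{\Gamma_0}\bigl[(|\eta|+1)^{n}-|\eta|^{n}\bigr]\bigl(\int_{\mathbb R^d}\varphi_\sigma(x)E^{+}(x,\eta)\,dx\bigr)R(\eta)\,\lambda(d\eta)$. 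The first integrand is nonpositive (and vanishes on $\emptyset$ since $E(\emptyset)=0$); in the second, $(|\eta|+1)^{n}-|\eta|^{n}\le n2^{n-1}|\eta|^{n-1}$ and $\int_{\mathbb R^d}\varphi_\sigma(x)E^{+}(x,\eta)\,dx\le\langle a^{+}\rangle|\eta|$ because $\varphi_\sigma\le1$, so the whole expression is $\le n2^{n-1}\langle a^{+}\rangle\int_{\Gamma_0}|\eta|^{n}R(\eta)\,\lambda(d\eta)$. Gr\"onwall's inequality then yields $\int_{\Gamma_0}|\eta|^{n}(S^{\dagger,\sigma}(t)R)(\eta)\,\lambda(d\eta)\le e^{n2^{n-1}\langle a^{+}\rangle t}\int_{\Gamma_0}|\eta|^{n}R(\eta)\,\lambda(d\eta)$, which is the stated invariance; the $n=1$ bound is a Lyapunov estimate forbidding loss of mass, hence $S^{\dagger,\sigma}$ is stochastic.

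For the weighted positivity, fix $0<\beta<\beta'$; positivity of $S^{\dagger,\sigma}(t)R$ being automatic, only finiteness of $|S^{\dagger,\sigma}(t)R|_\beta$ needs proof, and here I would run the Ovsyannikov-type scheme of Lemma \ref{IN1lm} in the scale $\{\mathcal G_{\beta''}\}_{\beta''\in[\beta,\beta']}$. On each $\mathcal G_{\beta''}$ with $\beta''>0$, the non-birth part $L_0+D$ generates a substochastic semigroup $\Sigma_{\beta''}$: by (\ref{12A}), $D$ is $L_0$-bounded with relative bound $e^{-\beta''}<1$ and $\int_{\Gamma_0}(L_0+D)R\,e^{\beta''|\eta|}\lambda(d\eta)\le0$ for $R\ge0$, so Proposition \ref{le:substoch} applies again. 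The birth part $B_+$ raises the particle number and, by $\varphi_\sigma\le1$ and the estimate (\ref{Iwona}) with $p=1$, defines a bounded operator $(B_+)_{\beta_1\beta_2}\colon\mathcal G_{\beta_2}\to\mathcal G_{\beta_1}$ with $\|(B_+)_{\beta_1\beta_2}\|\le\langle a^{+}\rangle e^{\beta_2}/(e(\beta_2-\beta_1))$ for $\beta_1<\beta_2$. Assembling the series of (\ref{Iwo})--(\ref{KkN}) with these $\Sigma_{\beta''}$ and $B_+$ produces, for $t<T(\beta',\beta)=(\beta'-\beta)e^{-\beta'}/\langle a^{+}\rangle$ (cf.\ (\ref{Qq5})), an operator in $\mathcal L(\mathcal G_{\beta'},\mathcal G_\beta)$; by the uniqueness established on $\mathcal G_0$ it coincides with the restriction of $S^{\dagger,\sigma}(t)$, and since $\Sigma_{\beta''}$, $D$ and $B_+$ are all positive, every term of the series is positive, whence $S^{\dagger,\sigma}(t)R\in\mathcal G_\beta^{+}$. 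When $\langle a^{+}\rangle=0$ there is no birth term, $L^{\dagger,\sigma}=L_0+D$ already generates a substochastic semigroup on every $\mathcal G_\beta$, and $T(\beta',\beta)=+\infty$.

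The step I expect to be the main obstacle is the passage from the substochastic semigroups at $r<1$ to the honest (stochastic) $C_0$-semigroup whose generator is exactly $\overline{(L^{\dagger,\sigma},\mathcal D_0)}$: showing that no mass is lost in the extension is precisely where the Thieme--Voigt theory together with the first-moment Lyapunov bound of the second paragraph are needed, and it is the technical core of \cite[Theorem 2.2]{K}. A secondary point requiring care is extracting the sharp lifespan $T(\beta',\beta)$ from the series, which, as in the proof of Lemma \ref{IN1lm}, hinges on keeping track of the constants produced by (\ref{Iwona}).
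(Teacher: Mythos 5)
The paper gives no internal proof of this proposition: it is imported verbatim as ``the relevant part of \cite[Theorem 2.2]{K}'', so there is nothing in the text itself to compare your argument against. Judged on its own, your sketch is a faithful reconstruction of the argument behind the cited result, and it parallels the paper's own use of the Thieme--Voigt machinery in Lemma \ref{1lm}: the splitting $L^{\dagger,\sigma}=L_0+B_{+}+D$, the zero mass balance $\int_{\Gamma_0}L^{\dagger,\sigma}R\,d\lambda=0$ on $\mathcal D_0^{+}$ (your computation via (\ref{12A}) is correct), Proposition \ref{le:substoch} for $r<1$ with the honesty question isolated at $r=1$, the $n$-th moment inequality for the invariance of $\mathcal G_{0,n}$ and the first moment as the Lyapunov functional for stochasticity, and finally an Ovsyannikov-type series in the scale $\{\mathcal G_{\beta''}\}_{\beta''\in[\beta,\beta']}$, built from the positive substochastic semigroups of $L_0+D$ and the positive operators $B_{+}$, whose radius of convergence reproduces exactly the lifespan (\ref{Qq5}).

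Two points should be tightened. First, the Gr\"onwall step in your second paragraph is formal: to make it rigorous one has to run it through the approximating semigroups at $r<1$ (or through the truncations by $I_N$), which is precisely what the functional hypotheses of \cite[Theorem 2.2]{TV} encode; similarly, the identification of the scale series with $S^{\dagger,\sigma}(t)$ ``by uniqueness on $\mathcal G_0$'' needs the (easy but necessary) remark that the series takes values in $\mathcal G_{\beta''}$ with $\beta''>0$, and $\mathcal G_{\beta''}\subset\mathcal D_0$ because $\Psi_\sigma(\eta)$ grows only polynomially in $|\eta|$, so that uniqueness of classical solutions of the Cauchy problem for the generator can be invoked. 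Second, a small slip in the constants: the estimate for the birth part gives $\|(B_{+})_{\beta_1\beta_2}\|\le \langle a^{+}\rangle e^{\beta_1}/\bigl(e(\beta_2-\beta_1)\bigr)$, i.e. with $e^{\beta_1}$ rather than $e^{\beta_2}$; your weaker bound is still sufficient, and majorizing the exponential factor by $e^{\beta'}$ uniformly along the scale is exactly what yields $T(\beta',\beta)=(\beta'-\beta)e^{-\beta'}/\langle a^{+}\rangle$ as in (\ref{Qq5}). With these caveats the proposal is sound and is, in substance, the proof given in \cite{K}.
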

Let now $\mu_0$ be the initial state as in Theorem \ref{1tm}. Then
for each $\Lambda \in \mathcal{B}_{\rm b}(\mathbb{R}^d)$, the
projection $\mu^\Lambda$ is absolutely continuous with respect to
$\lambda^\Lambda$, see (\ref{ProJ}). For this $\mu_0$, and for
$\Lambda\in \mathcal{B}_{\rm b}(\mathbb{R}^d)$ and $N \in
\mathbb{N}$, we set, see (\ref{Ap4}),
\begin{equation}
 \label{y8}
 R^{\Lambda}_0 (\eta) = \frac{d \mu^{\Lambda}}{ d
 \lambda^{\Lambda}}(\eta) \mathbb{I}_{\Gamma_{\Lambda}} (\eta), \qquad R^{\Lambda, N}_0 (\eta) = R^{\Lambda}_0 (\eta) I_{N}
 (\eta),    \ \ \eta \in \Gamma_0.
\end{equation}
Here $I_{N}$ and $\mathbb{I}_{\Gamma_\Lambda}$ are the indicator
functions of the sets $\{\eta \in \Gamma_0: |\eta|\leq N\}$, $N\in
\mathbb{N}$, and $\Gamma_\Lambda$, respectively. Clearly,
\begin{equation}
 \label{secure}
 \forall \beta >0 \qquad R^{\Lambda, N}_0\in \mathcal{G}^+_\beta.
\end{equation}
Set
\begin{equation}
 \label{y10}
 R^{\Lambda, N}_t = S^{\dagger,\sigma}(t) R^{\Lambda, N}_0 , \quad
 t>0,
\end{equation}
where $S^{\dagger,\sigma}$ is the semigroup as in Proposition
\ref{Ap1pn}. Then also $R^{\Lambda, N}_t \in \mathcal{G}^+_0$ for
all $t>0$.

For some $G\in B_{\rm bs}(\Gamma_0)$,  let us consider $F=KG$, cf.
(\ref{7A}). Since $G(\xi) =0$ for all $\xi$ such that $|\xi|>N(G)$,
see Definition \ref{Febdf}, we have $F\in \mathcal{F}_{\rm
cyl}(\Gamma)$ and
\begin{equation*}
% \label{y120}
 |F (\gamma)| \leq (1+|\gamma|)^{N(G)} C(G), \qquad \gamma\in
 \Gamma_0,
\end{equation*}
for some $ C(G)>0$. By Proposition \ref{Ap1pn} we then have from the
latter
\begin{equation}
  \label{qQ}
\left\vert \langle \! \langle KG , R^{\Lambda,N}_t \rangle \!
\rangle \right\vert < \infty.
\end{equation}

\subsubsection{The evolution of local correlation functions}

For a given $\mu \in \mathcal{P}_{\rm sP}$, the correlation function
$k_\mu$ and the local densities $R^\Lambda_\mu$, $\Lambda \in
\mathcal{B}_{\rm b}(\mathbb{R}^d)$, see (\ref{RN}), are related to
each other by (\ref{9AA}). In the first formula of (\ref{y8}) we
extend $R_0^\Lambda$ to the whole $\Gamma_0$. Then the corresponding
integral as in (\ref{9AA}) coincides with $k_{\mu_0}$ only on
$\Gamma_\Lambda$. The truncation made in the second formula in
(\ref{y8})  diminishes $R_0^\Lambda$. Its aim is to
satisfy (\ref{secure}).
Thus, with a
certain abuse of the terminology we call
\begin{equation}
 \label{y172}
q^{\Lambda,N}_0 (\eta) = \int_{\Gamma_0} R^{\Lambda,N}_0 (\eta \cup
\xi) \lambda(d \xi)
\end{equation}
{\it local correlation function}. The evolution $q^{\Lambda, N}_0
\mapsto q^{\Lambda, N}_t$ can be obtained from (\ref{y10}) by
setting
\begin{equation}
  \label{y13}
q^{\Lambda,N}_t (\eta) = \int_{\Gamma_0} R^{\Lambda,N}_t (\eta \cup
\xi) \lambda (d\xi), \qquad t\geq 0.
\end{equation}
However, so far this can only be used in a weak sense based on
(\ref{qQ}). Note that for $G\in B^{\star}_{\rm bs}(\Gamma_0)$, cf.
(\ref{9AY}), we have
\begin{equation}
  \label{Qq10}
 \langle \! \langle G , q^{\Lambda,N}_t  \rangle \! \rangle  =
\langle \! \langle KG , R^{\Lambda,N}_t \rangle \! \rangle \geq 0,
\end{equation}
since $R^{\Lambda,N}_t \in \mathcal{G}^+_0$. To place the evolution
$q^{\Lambda, N}_0 \mapsto q^{\Lambda, N}_t$ into an appropriate
Banach space we use the concluding part of Proposition \ref{Ap1pn}
and the following fact
\begin{equation}
  \label{Qq11}
\int_{\Gamma_0} e^{\omega|\eta| } q_t^{\Lambda,N}(\eta) \lambda (d
\eta) = \int_{\Gamma_0} \left(1+ e^{\omega}\right)^{|\eta|}
R_t^{\Lambda,N}(\eta) \lambda (d \eta),
\end{equation}
that can be obtained by (\ref{12A}). Since $R^{\Lambda, N}_0 \in
\mathcal{G}_{\beta'}$ for any $\beta'>0$, see (\ref{secure}),
we can take $\beta' =
\beta+1$ which maximizes $T(\beta', \beta)$ given in (\ref{Qq5}).
Then for each $\beta>0$, we have that
\begin{equation}
  \label{Herf10}
 R^{\Lambda, N}_t \in \mathcal{G}_{\beta}, \qquad {\rm for}  \ \ t <
 \tau(\beta):=\frac{e^{-\beta}}{e\langle a^{+} \rangle}.
\end{equation}
Hence, $q_t^{\Lambda,N} \in \mathcal{G}_\omega$ whenever
$R_t^{\Lambda,N} \in \mathcal{G}_\beta$ with $\beta$ such that
$e^{\beta} = 1+ e^{\omega}$, cf. (\ref{Qq11}). Moreover, for such
$\omega$ and $\beta$ the right-hand side of (\ref{y13}) defines a
continuous map from $\mathcal{G}_\beta$ to $\mathcal{G}_\omega$.
\begin{lemma}
 \label{q1lm}
Given $\omega_1 > 0$ and $\omega_2 > \omega_1$, let $\beta_i$ be such that $e^{\beta_i} = e^{\omega_i} +1$,
$i=1,2$.
Then $q^{\Lambda,N}_t$ is continuously
differentiable in $\mathcal{G}_{\omega_1}$ on $[0, \tau(\beta_2))$ and the following holds
\begin{equation}
 \label{qa}
 \frac{d}{dt} q^{\Lambda,N}_t = L^{\Delta, \sigma}_{\omega_1} q^{\Lambda,N}_t.
\end{equation}
\end{lemma}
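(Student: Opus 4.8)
The plan is to transfer the differential identity from the level of densities $R^{\Lambda,N}_t$ to the level of the local correlation functions $q^{\Lambda,N}_t$ via the integral transform \eqref{y13}, using the consistency relation \eqref{Qq11} to control the relevant norms. First I would fix $\omega_2 > \omega_1 > 0$ and the associated $\beta_i$ with $e^{\beta_i} = e^{\omega_i}+1$, and observe that for $t < \tau(\beta_2)$ one has $R^{\Lambda,N}_t \in \mathcal{G}_{\beta_2}$ by \eqref{Herf10} (applied with $\beta = \beta_2$, after using $\beta' = \beta_2 + 1$ as explained just before \eqref{Herf10}), hence $q^{\Lambda,N}_t \in \mathcal{G}_{\omega_2} \hookrightarrow \mathcal{G}_{\omega_1}$ by \eqref{Qq11} together with the continuity of the map \eqref{y13} from $\mathcal{G}_{\beta_2}$ to $\mathcal{G}_{\omega_2}$. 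Since $R^{\Lambda,N}_0 \in \mathcal{G}^+_{\beta}$ for every $\beta>0$ by \eqref{secure}, and since $S^{\dagger,\sigma}$ leaves each $\mathcal{G}_{0,n}$ invariant by Proposition \ref{Ap1pn}, the curve $t \mapsto R^{\Lambda,N}_t$ is a genuine (classical) solution of \eqref{Rr} in $\mathcal{G}_0$; one should note that for $t<\tau(\beta_2)$ it stays in $\mathcal{G}_{\beta_2}$, and that $L^{\dagger,\sigma} R^{\Lambda,N}_t$ then lies in a space on which $t \mapsto (L^{\dagger,\sigma} R^{\Lambda,N}_t)$ is continuous, so that $\frac{d}{dt} R^{\Lambda,N}_t = L^{\dagger,\sigma} R^{\Lambda,N}_t$ holds in $\mathcal{G}_{\beta'}$ for a slightly smaller $\beta'$.

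Next I would compute $\frac{d}{dt} q^{\Lambda,N}_t$ by differentiating under the integral sign in \eqref{y13}. This is justified because, for $t$ ranging over a compact subinterval of $[0,\tau(\beta_2))$, the difference quotients of $R^{\Lambda,N}_t$ converge in $\mathcal{G}_{\beta'}$ with $\beta' \in (\beta_1,\beta_2)$, and by the same computation that gives \eqref{Qq11} the integral transform $R \mapsto \int_{\Gamma_0} R(\cdot \cup \xi)\lambda(d\xi)$ is continuous from $\mathcal{G}_{\beta'}$ to $\mathcal{G}_{\omega'}$ whenever $e^{\beta'} = 1 + e^{\omega'}$, hence in particular into $\mathcal{G}_{\omega_1}$. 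This yields
\[
\frac{d}{dt} q^{\Lambda,N}_t(\eta) = \int_{\Gamma_0} \bigl(L^{\dagger,\sigma} R^{\Lambda,N}_t\bigr)(\eta \cup \xi)\, \lambda(d\xi),
\]
with both sides understood as elements of $\mathcal{G}_{\omega_1}$, and with the right-hand side continuous in $t$. It then remains to identify the right-hand side with $L^{\Delta,\sigma}_{\omega_1} q^{\Lambda,N}_t$.

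The core algebraic step is thus a purely combinatorial identity: for $R$ in a suitable subspace (say $R \in \mathcal{G}_{\beta'}$, $R$ also in every $\mathcal{G}_{0,n}$, which is the case here), setting $q(\eta) = \int_{\Gamma_0} R(\eta\cup\xi)\lambda(d\xi)$, one has
\[
\int_{\Gamma_0} \bigl(L^{\dagger,\sigma} R\bigr)(\eta\cup\xi)\,\lambda(d\xi) = \bigl(L^{\Delta,\sigma} q\bigr)(\eta).
\]
I would prove this by inserting the three terms of $L^{\dagger,\sigma}$ from \eqref{Ap5}, applying the integration-by-parts rule \eqref{12A} to move summations over subconfigurations past the $\lambda$-integral, and matching the resulting expressions termwise against \eqref{Qq21}--\eqref{Qq23}; this is the standard $K$-transform duality computation (the operator $L^{\Delta,\sigma}$ is by construction the image of $L^{\dagger,\sigma}$ under the correlation-function map, cf. the derivation of \eqref{21A}--\eqref{23A} from \eqref{R20}), now carried out with the extra factor $\varphi_\sigma(x)$ which simply rides along inside the sums and integrals. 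The main obstacle — and the only genuinely delicate point — is the rigorous justification of differentiating under the integral in \eqref{y13} and of the unbounded-operator manipulations: one must check that all the interchanges of $\frac{d}{dt}$, the $\lambda$-integral over $\xi$, and the (unbounded) operator $L^{\dagger,\sigma}$ take place between spaces in the scale $\{\mathcal{G}_\beta\}$, $\{\mathcal{G}_\omega\}$ on which the relevant maps are bounded, using repeatedly that $t<\tau(\beta_2)$ keeps $R^{\Lambda,N}_t$ in $\mathcal{G}_{\beta_2}$ with room to spare. Once the identity $\int (L^{\dagger,\sigma}R)(\cdot\cup\xi)\lambda(d\xi) = L^{\Delta,\sigma} q$ is established and $L^{\Delta,\sigma} q^{\Lambda,N}_t \in \mathcal{G}_{\omega_1}$ is confirmed (which follows since $q^{\Lambda,N}_t \in \mathcal{G}_{\omega_2}$ and $L^{\Delta,\sigma}$ maps $\mathcal{G}_{\omega_2}$ boundedly into $\mathcal{G}_{\omega_1}$), equation \eqref{qa} follows, together with the asserted continuous differentiability on $[0,\tau(\beta_2))$.
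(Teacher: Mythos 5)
Your proposal is correct, and its first half (continuous differentiability of $q^{\Lambda,N}_t$ obtained by pushing the differentiability of $R^{\Lambda,N}_t$ through the transform (\ref{y13}), which is a bounded map $\mathcal{G}_{\beta}\to\mathcal{G}_{\omega}$ when $e^{\beta}=1+e^{\omega}$) is exactly what the paper does to arrive at the analogue of your displayed formula, namely (\ref{qa1}). Where you diverge is in the identification of $\int_{\Gamma_0}(L^{\dagger,\sigma}R^{\Lambda,N}_t)(\cdot\cup\xi)\,\lambda(d\xi)$ with $L^{\Delta,\sigma}_{\omega_1}q^{\Lambda,N}_t$: you propose a direct termwise computation, inserting the three terms of (\ref{Ap5}) and using (\ref{12A}) to match against (\ref{Qq21})--(\ref{Qq23}), whereas the paper argues weakly: it introduces the predual operator $\widehat{L}^{\sigma}=A^{\sigma}+B^{\sigma}$ (the $\sigma$-modification of (\ref{z1}) and (\ref{Aqq4})), pairs both sides of (\ref{qa1}) against bounded continuous $G$, uses $\langle\!\langle G,q^{\Lambda,N}_t\rangle\!\rangle=\langle\!\langle KG,R^{\Lambda,N}_t\rangle\!\rangle$ together with $\langle\!\langle K\widehat{L}^{\sigma}G,R\rangle\!\rangle=\langle\!\langle KG,L^{\dagger,\sigma}R\rangle\!\rangle$ and the bound $|K(\widehat{L}^{\sigma}G)(\eta)|\leq C|\eta|^{2}2^{|\eta|}\sup|G|$, and then concludes since bounded continuous functions form a separating class for $\mathcal{G}_{\omega_1}$. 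Both routes are legitimate: your direct computation is more elementary and yields the intertwining identity explicitly, but it puts the whole burden on Fubini-type justifications for each of the three terms (which do go through, because $R^{\Lambda,N}_t$ has all exponential moments, i.e.\ lies in $\mathcal{G}_{\beta}$ for the relevant $\beta>0$, and the quadratic growth of $E(\eta)$ is absorbed by the exponential weight drop $\omega_2\to\omega_1$ — this same observation also validates your claim that $L^{\Delta,\sigma}$ maps $\mathcal{G}_{\omega_2}$ boundedly into $\mathcal{G}_{\omega_1}$); the paper's duality argument trades these pointwise interchanges for a single easily-controlled pairing estimate, at the cost of invoking the $K$-transform machinery and a measure-determining-class argument. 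If you carry out your version, spell out the absolute convergence of each term before applying (\ref{12A}).
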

\begin{proof}
By the mentioned continuity of the map in (\ref{y13}) the continuous
differentiability of $q^{\Lambda,N}_t$ follows from the
corresponding property of $R^{\Lambda,N}_t\in
\mathcal{G}_{\beta_2}$, which it has in view of (\ref{y10}). Then the following holds
\begin{equation}
 \label{qa1}
 \left(\frac{d}{dt} q^{\Lambda,N}_t \right) (\eta) = \int_{\Gamma_0} \left( L^{\dagger,\sigma}_{\beta_1}
 R^{\Lambda,N}_t\right)(\eta\cup\xi) \lambda(d\xi)
\end{equation}
Where $L^{\dagger,\sigma}_{\beta_1}$ is the trace of
$L^{\dagger,\sigma}$ in $\mathcal{G}_{\beta_1}$. We define the
action of $\widehat{L}^{\sigma} = A^\sigma + B^\sigma$ in such a way
that
\begin{equation*}
% \label{qa2}
 \langle \! \langle \widehat{L}^{\sigma} G, k \rangle \!\rangle = \langle \! \langle  G, L^{\Delta,\sigma} k \rangle \!\rangle ,
\end{equation*}
where that $L^{\Delta,\sigma}$ acts as in (\ref{Qq21}) and
(\ref{Qq22}). Then $A^\sigma$ acts as in  (\ref{z1})
with $E^{+} (y,\eta)$ replaced by $\varphi_\sigma (y) E^{+}
(y,\eta)$, and $B^\sigma$ acts as in  (\ref{Aqq4}) with $a^{+}(x-y)$
multiplied by $\varphi_\sigma (x)$. Let $G:\Gamma_0\to \mathbb{R}$
be bounded and continuous. Then for some $C>0$ we have, see
(\ref{7A}),
\[
|\widehat{L}^\sigma G(\eta)| \leq |\eta|^2 C \sup_{\eta\in
\Gamma_0}|G(\eta)|,  \quad |K(\widehat{L}^\sigma G)(\eta)| \leq
|\eta|^22^{|\eta|} C \sup_{\eta\in \Gamma_0}|G(\eta)|,
\]
and hence we can calculate the integrals below
\begin{equation}
 \label{qa3}
\langle \! \langle \widehat{L}^{\sigma} G, q_t^{\Lambda,N} \rangle
\!\rangle = \langle \! \langle  G,
L^{\Delta,\sigma}_{\omega_1}q_t^{\Lambda,N} \rangle \!\rangle,
\end{equation}
where $\omega_1$ and $q_t^{\Lambda,N}$ are as in (\ref{qa1}). On the
other hand, by (\ref{qa1}) we have
\begin{gather}
\label{qa4} \langle \! \langle  \widehat{L}^{\sigma} G,
q_t^{\Lambda,N} \rangle \!\rangle =
\langle \! \langle K \widehat{L}^{\sigma} G, R_t^{\Lambda,N} \rangle \!\rangle \\[.2cm] =
\langle \! \langle K G, L^{\dagger,\sigma}_{\beta_1} R_t^{\Lambda,N}
\rangle \!\rangle  = \langle \! \langle G, \frac{d}{dt}
q^{\Lambda,N}_t \rangle \!\rangle. \nonumber
\end{gather}
Since (\ref{qa3}) and (\ref{qa4}) hold true for all bounded
continuous functions, we have that the expression on both sides of (\ref{qa})
are equal to each other, which completes the proof.
\end{proof}
\begin{corollary}
 \label{qaco}
Let $k_t^{\Lambda,N}\in \mathcal{K}_{\alpha_2}$ be the solution of
the problem (\ref{Cauchy-sigma}) with $k^{\Lambda,N}_0 =
q^{\Lambda,N}_0\in \mathcal{K}_{\alpha_1}$, see Lemma \ref{sQQlm}. Then for
each $G\in B_{\rm bs}^{\star}(\Gamma_0)$ and
\[
 t < \min\{T(\alpha_2, \alpha_1;B^\Delta); 1/ e \langle a^{+} \rangle\},
\]
see
(\ref{Herf10}), we have that
\begin{equation}
 \label{qa5}
\langle \! \langle  G, k_t^{\Lambda,N} \rangle \!\rangle \geq 0.
\end{equation}
\end{corollary}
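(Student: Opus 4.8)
The idea is to identify $k_t^{\Lambda,N}$ --- which, being obtained as in Lemma~\ref{B1lm}, carries no sign information --- with the function $q_t^{\Lambda,N}$ defined in (\ref{y13}) from the density evolution $R_t^{\Lambda,N}=S^{\dagger,\sigma}(t)R_0^{\Lambda,N}$, for which positivity of the pairing against $G\in B^\star_{\rm bs}(\Gamma_0)$ is immediate: by Proposition~\ref{Ap1pn} the semigroup $S^{\dagger,\sigma}$ is stochastic, so $R_t^{\Lambda,N}\in\mathcal{G}_0^+$, and since $(KG)(\eta)\ge 0$, (\ref{Qq10}) gives $\langle\!\langle G,q_t^{\Lambda,N}\rangle\!\rangle=\langle\!\langle KG,R_t^{\Lambda,N}\rangle\!\rangle\ge 0$, the integral being finite by (\ref{qQ}). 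Thus the whole task is to establish the equality $k_t^{\Lambda,N}=q_t^{\Lambda,N}$, which I would do by threading the common initial datum $q_0^{\Lambda,N}$ through the chain of spaces $\mathcal{K}_{\alpha_2}\hookleftarrow\mathcal{U}_{\sigma,\alpha_2}\hookrightarrow\mathcal{G}_\omega$ and using the uniqueness results already in hand.

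Concretely: first note that $q_0^{\Lambda,N}$, defined in (\ref{y172}), is a bounded function supported on $\{\eta\subset\Lambda:|\eta|\le N\}$, so it --- together with $L^{\Delta,\sigma}q_0^{\Lambda,N}$ --- belongs to $\mathcal{K}_\alpha$, to $\mathcal{U}_{\sigma,\alpha}$ for every $\alpha$, and to $\mathcal{G}_{\omega'}$ for every $\omega'$, hence meets the domain requirements of (\ref{Cauchy-sigma}), (\ref{Cauchy-u}) and (\ref{Cauchy-omega}). Then: (a) by Lemma~\ref{sQQlm}, $k_t^{\Lambda,N}=Q^\sigma_{\alpha\alpha_1}(t;B^{\Delta,\sigma}_b)q_0^{\Lambda,N}$ solves (\ref{Cauchy-sigma}) in $\mathcal{K}_{\alpha_2}$ on $[0,T(\alpha_2,\alpha_1;B^\Delta_b))$; (b) since $q_0^{\Lambda,N}\in\mathcal{U}_{\sigma,\alpha_1}$, Corollary~\ref{uQQco} identifies this $k_t^{\Lambda,N}$ with the solution $u_t\in\mathcal{U}_{\sigma,\alpha_2}$ of (\ref{Cauchy-u}) from Lemma~\ref{uQQlm}; (c) since $q_0^{\Lambda,N}\in\mathcal{U}_{\sigma,\alpha_0}$ as well, Corollary~\ref{qQQco} identifies $u_t$ with the unique solution $q_t$ of (\ref{Cauchy-omega}) in $\mathcal{G}_\omega$; (d) by Lemma~\ref{q1lm} the concrete function $q_t^{\Lambda,N}$ of (\ref{y13}) is itself a solution of (\ref{Cauchy-omega}) (i.e.\ of (\ref{qa}) in $\mathcal{G}_{\omega_1}$, $\omega_1>0$) with the same initial value, on $[0,\tau(\beta_2))$; (e) by the uniqueness of Lemma~\ref{qQQlm} all these coincide, so $k_t^{\Lambda,N}=q_t^{\Lambda,N}$ on the common time interval, and feeding in (\ref{Herf10}) --- taking $\omega>0$ as small as the construction permits --- one reaches the range of $t$ named in the statement. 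Combining with the first paragraph yields (\ref{qa5}).

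Since essentially all of the analytic substance is already packaged in Corollaries~\ref{uQQco}, \ref{qQQco} and Lemmas~\ref{q1lm}, \ref{qQQlm}, the only real care needed here is in the bookkeeping, and that is where I expect the (mild) obstacle to lie: one must check that the four evolutions --- $k_t^{\Lambda,N}$ in $\mathcal{K}_{\alpha_2}$, $u_t$ in $\mathcal{U}_{\sigma,\alpha_2}$, $q_t$ in $\mathcal{G}_\omega$, and $q_t^{\Lambda,N}$ given by (\ref{y13}) --- are simultaneously defined on a non-degenerate interval, that each consecutive pair is matched there by the appropriate uniqueness theorem, that the common datum $q_0^{\Lambda,N}$ genuinely satisfies the hypotheses of every cited result, and that $\omega$ (equivalently $\beta$ in (\ref{Herf10})) can be chosen so that the interval it furnishes reaches the cutoff $1/(e\langle a^+\rangle)$ in the statement. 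One should also keep in mind that in steps (c)--(d) the space $\mathcal{G}_\omega$ is being used as an honest Banach space of correlation-type functions rather than as a predual, and that the operator in (\ref{qa}) is exactly the restriction of $L^{\Delta,\sigma}$ that appears in (\ref{Cauchy-omega}).
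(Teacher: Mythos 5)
Your proposal is correct and follows essentially the same route as the paper: the paper likewise identifies $k_t^{\Lambda,N}=u_t=q_t^{\Lambda,N}$ via Corollaries \ref{uQQco} and \ref{qQQco} (with Lemma \ref{q1lm} supplying that the function in (\ref{y13}) solves (\ref{Cauchy-omega})), and then reads off (\ref{qa5}) from (\ref{Qq10}) and the positivity of $R_t^{\Lambda,N}$. Your extra bookkeeping about the common time interval and the choice of small $\omega$ (i.e.\ $\beta$ in (\ref{Herf10})) is exactly what the paper leaves implicit.
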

\begin{proof}
By (\ref{y8}) and (\ref{y172}) we have that $q_0^{\Lambda,N}\in
\mathcal{U}_{\sigma, \alpha_1}$ (this is the reason to consider such
local evolutions). Let then $u_t$ be the solution as in Lemma
\ref{uQQlm} with this initial condition.
 Then by Corollaries
 \ref{uQQco} and \ref{qQQco} it follows that $k^{\Lambda,N}_t = u_t = q_{t}^{\Lambda,N}$ for the mentioned values of $t$. Thus,
the validity of (\ref{qa5}) follows by (\ref{Qq10}).
\end{proof}

\subsection{Taking the limits}

\label{SS5.4}

Note that (\ref{qa5}) holds for
\[
k_t^{\Lambda,N} = Q^\sigma_{\alpha\alpha_1} (t;B^{\Delta,\sigma}_b)
q_0^{\Lambda,N},
\]
with $\alpha\in (\alpha_1, \alpha_2)$ dependent on $t$, see (\ref{QQ2}).
In this subsection, we first pass in (\ref{qa5}) to
the limit $\sigma \downarrow 0$, then we get rid of the locality
imposed in (\ref{y8}).
\begin{lemma}
  \label{qblm}
Let $k_t$ and $k_t^\sigma$ be the solutions of the problems
(\ref{Cauchy}) and (\ref{Cauchy-sigma}), respectively, with
$k_t|_{t=0}= k^\sigma_t|_{t=0} = k_0 \in \mathcal{K}_{\alpha_0}$,
$\alpha_0 > -\log \vartheta$. Then for each $\alpha> \alpha_0$ there
exists $\widetilde{T}=\widetilde{T}(\alpha, \alpha_0) < T(\alpha,
\alpha_0;B^\Delta_b)$ such that for each $G\in B_{\rm bs}(\Gamma_0)$
and $t\in [0, \widetilde{T}]$ the following holds
\begin{equation}
  \label{qb1}
  \lim_{\sigma \downarrow 0} \langle \! \langle  G, k_t^{\sigma} \rangle
  \!\rangle = \langle \! \langle  G, k_t \rangle
  \!\rangle.
\end{equation}
\end{lemma}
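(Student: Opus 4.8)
\textbf{Proof idea for Lemma \ref{qblm}.} The plan is to write $k_t^\sigma-k_t$ as a variation‑of‑constants integral, pair it with $G$, transfer $G$ onto the predual scale by means of Lemma \ref{Qq1lm}, and only there pass to the limit $\sigma\downarrow 0$, exploiting that the defect $L^{\Delta,\sigma}-L^\Delta$ converges to $0$ \emph{strongly on the spaces $\mathcal G_\bullet$} although it does \emph{not} converge in operator norm on the spaces $\mathcal K_\bullet$.

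First I would fix $\alpha>\alpha_0$ and pick $\alpha_0<\alpha_\flat<\alpha_\sharp<\alpha$. By Lemma \ref{B1lm} and Lemma \ref{sQQlm} both $k_t=Q_{\cdot\,\alpha_0}(t;B^\Delta_b)k_0$ and $k_t^\sigma=Q^\sigma_{\cdot\,\alpha_0}(t;B^{\Delta,\sigma}_b)k_0$ exist in $\mathcal K_{\alpha_\flat}$ on a common interval $[0,\widetilde T]$, and by the $\sigma$‑uniform bound (\ref{QQ3}) the norms $\|k_s^\sigma\|_{\alpha_\flat}$ are bounded uniformly in $\sigma$ and $s\in[0,\widetilde T]$ as soon as $\widetilde T<T(\alpha_\flat,\alpha_0;B^\Delta_b)$. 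Writing $L^{\Delta,\sigma}=A^\Delta_b+B^\Delta_b+(L^{\Delta,\sigma}-L^\Delta)$ and noting that $v_t:=k_t^\sigma-k_t$ solves, in $\mathcal K_{\alpha_\sharp}$,
\[
\tfrac{d}{dt}v_t=(A^\Delta_b)_{\alpha_\sharp\alpha_\flat}v_t+(B^\Delta_b)_{\alpha_\sharp\alpha_\flat}v_t+(L^{\Delta,\sigma}-L^\Delta)_{\alpha_\sharp\alpha_\flat}k_t^\sigma,\qquad v_0=0,
\]
I would integrate this exactly as (\ref{fra1}), (\ref{Ju1}) were obtained: the $A^\Delta_b$‑part is resolved by the semigroup $\Sigma$, the $B^\Delta_b v_t$‑term and the defect term are treated as inhomogeneities, and iterating in the $B^\Delta_b$‑part precisely as in the proof of Lemma \ref{B1lm} absorbs it into $Q_{\alpha\alpha_\sharp}(t-s;B^\Delta_b)$, giving
\[
v_t=\int_0^t Q_{\alpha\alpha_\sharp}(t-s;B^\Delta_b)\,(L^{\Delta,\sigma}-L^\Delta)_{\alpha_\sharp\alpha_\flat}\,k_s^\sigma\,ds \in\mathcal K_\alpha ,
\]
valid for $t\le\widetilde T$ with $\widetilde T$ small enough for the iteration to converge and for $t-s$ to remain below the pertinent thresholds.

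Next I would pair with $G\in B_{\rm bs}(\Gamma_0)$; since $B_{\rm bs}(\Gamma_0)\subset\mathcal G_\beta$ for every $\beta$, the identity (\ref{Qq1}) of Lemma \ref{Qq1lm} applied to $Q_{\alpha\alpha_\sharp}(t-s;B^\Delta_b)$ yields
\[
\langle\!\langle G,v_t\rangle\!\rangle=\int_0^t\langle\!\langle H_{\alpha_\sharp\alpha}(t-s)G,\ (L^{\Delta,\sigma}-L^\Delta)_{\alpha_\sharp\alpha_\flat}k_s^\sigma\rangle\!\rangle\,ds
=\int_0^t\langle\!\langle M^\sigma_{\alpha_\flat\alpha_\sharp}H_{\alpha_\sharp\alpha}(t-s)G,\ k_s^\sigma\rangle\!\rangle\,ds,
\]
where $M^\sigma_{\alpha_\flat\alpha_\sharp}:\mathcal G_{\alpha_\sharp}\to\mathcal G_{\alpha_\flat}$ is the transpose of $(L^{\Delta,\sigma}-L^\Delta)_{\alpha_\sharp\alpha_\flat}$, obtained from the integration‑by‑parts rule (\ref{12A}) in exactly the way $\widehat L^\sigma$ is produced from $L^{\Delta,\sigma}$ in the proof of Lemma \ref{q1lm}; concretely $M^\sigma$ collects the $A_2$‑ and $B_2$‑type terms carrying the factor $\varphi_\sigma(\cdot)-1$. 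Because $|\varphi_\sigma-1|\le1$, one gets $\|M^\sigma_{\alpha_\flat\alpha_\sharp}\|\le C/(\alpha_\sharp-\alpha_\flat)$ uniformly in $\sigma$ (the same estimates as for $B_b$ and $A_2$); and, crucially, for each fixed $\Psi\in\mathcal G_{\alpha_\sharp}$ one has $M^\sigma_{\alpha_\flat\alpha_\sharp}\Psi\to0$ in $\mathcal G_{\alpha_\flat}$ as $\sigma\downarrow0$ by dominated convergence — the pointwise limit $\varphi_\sigma(\cdot)-1\to0$ being dominated since $\Psi\in\mathcal G_{\alpha_\sharp}$ with $\alpha_\sharp>\alpha_\flat$ and $a^\pm\in L^1\cap L^\infty$. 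Finally I would let $\sigma\downarrow0$ under the $s$‑integral: the operators $H$ are $\sigma$‑independent, so $H_{\alpha_\sharp\alpha}(t-s)G$ is a fixed element of $\mathcal G_{\alpha_\sharp}$, hence $M^\sigma_{\alpha_\flat\alpha_\sharp}H_{\alpha_\sharp\alpha}(t-s)G\to0$ in $\mathcal G_{\alpha_\flat}$ while $\|k_s^\sigma\|_{\alpha_\flat}$ stays uniformly bounded, so $|\langle\!\langle M^\sigma_{\alpha_\flat\alpha_\sharp}H_{\alpha_\sharp\alpha}(t-s)G,k_s^\sigma\rangle\!\rangle|\le|M^\sigma_{\alpha_\flat\alpha_\sharp}H_{\alpha_\sharp\alpha}(t-s)G|_{\alpha_\flat}\|k_s^\sigma\|_{\alpha_\flat}\to0$ pointwise in $s$; it is dominated by a constant (using $\|M^\sigma\|\le C/(\alpha_\sharp-\alpha_\flat)$, the bound (\ref{qx}) on $\|H_{\alpha_\sharp\alpha}(t-s)\|$ valid for $t-s\le\widetilde T<T(\alpha,\alpha_\sharp;B^\Delta_b)$, and the $\sigma$‑uniform bound on $\|k_s^\sigma\|_{\alpha_\flat}$), so dominated convergence gives (\ref{qb1}). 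Choosing $\widetilde T(\alpha,\alpha_0)$ as the minimum of the finitely many smallness thresholds above (which is automatically below $T(\alpha,\alpha_0;B^\Delta_b)$) finishes the proof.

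\textbf{Main obstacle.} The decisive point is that $L^{\Delta,\sigma}-L^\Delta$ does \emph{not} tend to $0$ in the operator norm on the scale $\{\mathcal K_\alpha\}$ — for configurations spread far from the origin the factor $\varphi_\sigma(x)-1$ stays of order one for every $\sigma$ — so $\|v_t\|_\alpha$ cannot be estimated directly. The resolution is to transfer everything to the predual scale $\{\mathcal G_\alpha\}$, where testing the defect against a fixed function turns it into an integral of $\varphi_\sigma(\cdot)-1$ against an $L^1$‑density, which \emph{does} vanish; the technical price is to keep all the dual pairings well defined across the two scales and to produce an $s$‑integrable, $\sigma$‑uniform majorant, which is what forces the restriction to a short interval $[0,\widetilde T]$.
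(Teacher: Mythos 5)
Your proposal is correct and follows essentially the same route as the paper's proof: the same Duhamel representation of $k_t-k_t^\sigma$ (the paper merely splits the defect into its $A_2^\Delta$- and $B^\Delta_{2,b}$-parts, $M_\sigma(t)$ and $N_\sigma(t)$), the same transfer of $G$ to the predual scale via $H_{\alpha_2\alpha}(t-s)$ from Lemma \ref{Qq1lm}, and the same dominated-convergence argument based on $1-\varphi_\sigma\to 0$ together with the $\sigma$-uniform bounds (\ref{QQ3}) and (\ref{qx}). The only cosmetic difference is that you phrase the vanishing as strong convergence of the transposed defect on $\mathcal{G}_{\alpha_\flat}$, while the paper writes out the pairing explicitly through the $L^1$-function $g_s$; these are the same estimate.
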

\begin{proof}
Take  $\alpha _2 \in (\alpha_0, \alpha)$ and $\alpha_1 \in
(\alpha_0, \alpha_2)$. Thereafter, take
\begin{equation}
 \label{qx1}
\widetilde{T} < \min\left\{ T(\alpha_1, \alpha_0;B^\Delta_b);
T(\alpha, \alpha_2;B^\Delta_b) \right\}.
\end{equation}
For $t\leq \widetilde{T}$, by (\ref{bed7}), (\ref{fra}),
(\ref{Qq22}), and (\ref{QQ2}) we then have that the following holds,
see (\ref{fra}) and (\ref{QQ15}),
\begin{eqnarray*}
%\label{qb2}
Q_{\alpha \alpha_0}(t;B^\Delta_b) k_0 & =  &
Q^\sigma_{\alpha \alpha_0} (t) k_0 + M_\sigma (t) + N_\sigma (t),
\nonumber \\[.2cm]
M_\sigma (t) & := & \int_0^t Q_{\alpha\alpha_2}
(t-s;B^\Delta_b) \left((A_2^{\Delta})_{\alpha_2 \alpha_1} - (A_2^{\Delta,\sigma})_{\alpha_2 \alpha_1}\right) k^\sigma_s d s \nonumber \\[.2cm]
N_\sigma (t) & :=  & \int_0^t Q_{\alpha\alpha_2} (t-s;B^\Delta_b)
\left((B_{2,b}^{\Delta})_{\alpha_2 \alpha_1} -
(B_{2,b}^{\Delta,\sigma})_{\alpha_2 \alpha_1}\right) k^\sigma_s d s
, \nonumber
\end{eqnarray*}
where
\begin{equation}
 \label{qx2}
k_s^\sigma =  Q^\sigma_{\alpha_1 \alpha_0} (s;B^\Delta_b) k_0.
\end{equation}
Then
\begin{gather}
  \label{Aug}
\langle \! \langle  G, k_t \rangle
  \!\rangle - \langle \! \langle  G, k_t^\sigma \rangle
  \!\rangle = \langle \! \langle  G, M_\sigma(t) \rangle
  \!\rangle + \langle \! \langle  G, N_\sigma(t) \rangle
  \!\rangle.
\end{gather}
By (\ref{Qq1}) we get
\begin{gather}
  \label{qb3}
\langle  \! \langle  G, M_{\sigma} (t)\rangle
  \!\rangle = \int_0^t \langle  \! \langle  G, Q_{\alpha \alpha_2} (t-s;B^\Delta_b) v_s \rangle
  \!\rangle d s \\[.2cm] = \int_0^t \langle  \! \langle H_{\alpha_2 \alpha}(t-s) G, v_s \rangle
  \!\rangle d s = \int_0^t \langle  \! \langle  G_{ t-s}, v_s \rangle
  \!\rangle d s ,\nonumber
\end{gather}
where
\begin{eqnarray}
 \label{qb4}
 & & \langle  \! \langle  G_{ t-s}, v_s \rangle
  \!\rangle \\[.2cm] & & \quad  = \int_{\Gamma_0} G_{ t-s} (\eta) \sum_{x\in \eta} \left(1- \varphi_\sigma (x)\right) E^{+} (x,\eta\setminus x)
k^\sigma_s (\eta \setminus x) \lambda ( d \eta)\qquad \nonumber \\[.2cm] &  & \quad = \int_{\Gamma_0} \int_{\mathbb{R}^d} G_{ t-s} (\eta\cup x)
\left(1- \varphi_\sigma (x)\right) E^{+} (x,\eta) k^\sigma_s (\eta )
d x \lambda ( d \eta),  \nonumber
\end{eqnarray}
where the latter line was obtained by means of (\ref{12A}). Note
that $k_s^\sigma \in \mathcal{K}_{\alpha_1}$ and $G_{ t-s}\in
\mathcal{G}_{\alpha_2}$ for $s\leq t\leq \widetilde{T}$, see
(\ref{qx1}). We use this fact to prove that
\begin{equation*}
 %\label{qb5}
g_{s} (x) := \int_{\Gamma_0} \frac{1}{|\eta| + 1} \left\vert
G_{s}(\eta \cup x) \right\vert e^{\alpha_2 |\eta|} \lambda ( d \eta)
\end{equation*}
lies in $L^1(\mathbb{R}^d)$ for each $s\in [0,t]$. Indeed, by
(\ref{12A}) and (\ref{NoR}) we get
\begin{equation}
 \label{qb6}
 \|g_{s}\|_{L^1(\mathbb{R}^d)} \leq e^{-\alpha_2} |G_{s}|_{\alpha_2} \leq C_1 <\infty,
\end{equation}
where
\begin{equation}
 \label{qb9}
 C_1 :=
e^{-\alpha_2}\max_{s\in [0,\widetilde{T}]} |G_s|_{\alpha_2} \leq
 \frac{e^{-\alpha_0}T(\alpha, \alpha_2;B^\Delta_b) |G|_{\alpha}}{T(\alpha,
 \alpha_2;B^\Delta_b)-\widetilde{T}},
\end{equation}
see (\ref{qx}). By (\ref{QQ3}) and (\ref{qx2}) we also get
\begin{equation}
\label{qb7} \max_{s\in [0,\widetilde{T}]} \|k_s^\sigma\|_{\alpha_2}
\leq \frac{T(\alpha_1, \alpha_0;B^\Delta_b)
\|k_0\|_{\alpha_0}}{T(\alpha_1, \alpha_0;B^\Delta_b)-\widetilde{T}}
=: C_2 <\infty,
\end{equation}
see (\ref{qx1}). Now we use (\ref{qb4}), (\ref{qb6}), (\ref{qb7})
and obtain by (\ref{14A}) and (\ref{Iwona}) that the following holds
\begin{eqnarray}
 \label{qb8}
& & \left\vert \langle  \! \langle  G, M_{\sigma} (t)\rangle
  \!\rangle \right\vert  \leq \varkappa(\alpha_2 - \alpha_1) \|a^{+}\|e^{\alpha_1}  C_2 \\[.2cm]
& & \qquad \times  \int_0^{\widetilde{T}}\int_{\mathbb{R}^d} g_s (x)
(1- \varphi_\sigma(x)) d x ds, \nonumber
\end{eqnarray}
where
\[
\varkappa (\beta) := \frac{1}{e\beta} + \left(\frac{2}{e\beta}
\right)^2.
\]
By (\ref{qb6}) and (\ref{qb9}) we conclude that the integrand in the
right-hand side of (\ref{qb8}) is bounded by $C_1$. By the Lebesgue
dominated convergence theorem this yields ${\rm RHS}(\ref{qb8}) \to
0$ as $\sigma\downarrow 0$. In the  same way one proves that also
\[
\left\vert \langle  \! \langle  G, N_{\sigma} (t)\rangle
  \!\rangle \right\vert \to 0, \qquad \sigma\downarrow 0,
\]
which yields (\ref{qb1}), see (\ref{Aug}).
\end{proof}
\vskip.1cm \noindent Below by a cofinal sequence
$\{\Lambda_n\}_{n\in \mathbb{N}} \subset \mathcal{B}_{\rm
b}(\mathbb{R}^d)$ we mean a sequence such that: $\Lambda_{n}\subset
\Lambda_{n+1}$ for all $n$, and each $x\in \mathbb{R}$ belongs to a
certain $\Lambda_n$.
\begin{lemma}
  \label{lastlm}
Let $\{\Lambda_n\}_{n\in \mathbb{N}}$ be a cofinal sequence and
$q_0^{\Lambda,N}$ be as in (\ref{y172}). Let also $\alpha_1$ and
$\alpha_2$ be as in Lemma \ref{Wolflm}. Then for each $t\in
[0,T(\alpha_2, \alpha_1;B^\Delta_b))$ and $G\in B_{\rm
bs}(\Gamma_0)$, the following holds
\begin{equation}
  \label{Augg}
  \lim_{n \to +\infty}\lim_{N
\to +\infty} \langle  \! \langle  G,
Q_{\alpha_2\alpha_1}(t;B^\Delta_b) q_{0}^{\Lambda_n,N}\rangle
  \!\rangle  = \langle  \! \langle  G,
Q_{\alpha_2\alpha_1}(t;B^\Delta_b )k_{\mu_0}\rangle
  \!\rangle.
\end{equation}
\end{lemma}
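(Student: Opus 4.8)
The plan is to prove \eqref{Augg} as an iterated limit, first letting $N\to+\infty$ with $\Lambda_n$ fixed and then letting $n\to+\infty$. The key point is that the operator $Q_{\alpha_2\alpha_1}(t;B^\Delta_b)$ is a fixed bounded operator from $\mathcal{K}_{\alpha_1}$ to $\mathcal{K}_{\alpha_2}$, so by \eqref{Qq1} (taking $\alpha=\alpha_2$, $\alpha_1$ as here) one has
\[
\langle\!\langle G, Q_{\alpha_2\alpha_1}(t;B^\Delta_b) q\rangle\!\rangle
= \langle\!\langle H_{\alpha_1\alpha_2}(t) G, q\rangle\!\rangle
\]
for every $q\in\mathcal{K}_{\alpha_1}$, where $G_t:=H_{\alpha_1\alpha_2}(t)G\in\mathcal{G}_{\alpha_1}$. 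Thus \eqref{Augg} reduces to showing
\[
\lim_{n\to+\infty}\lim_{N\to+\infty}\langle\!\langle G_t, q_0^{\Lambda_n,N}\rangle\!\rangle = \langle\!\langle G_t, k_{\mu_0}\rangle\!\rangle,
\]
and since $G_t\in\mathcal{G}_{\alpha_1}$, i.e. $G_t$ is integrable against $e^{\alpha_1|\cdot|}\lambda$, it suffices to prove that $q_0^{\Lambda_n,N}\to k_{\mu_0}$ in an appropriate pointwise-and-dominated sense so that the dominated convergence theorem applies on $\Gamma_0$.

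First I would handle the limit $N\to+\infty$. Fix $\Lambda=\Lambda_n$. From \eqref{y172} and \eqref{y8}, $q_0^{\Lambda,N}(\eta)=\int_{\Gamma_0}R_0^\Lambda(\eta\cup\xi)I_N(\eta\cup\xi)\lambda(d\xi)$, which by monotone convergence (as $I_N\uparrow 1$ and $R_0^\Lambda\geq0$) increases to $\int_{\Gamma_0}R_0^\Lambda(\eta\cup\xi)\lambda(d\xi)$, which by \eqref{9AA} is exactly $k_{\mu_0}(\eta)$ for $\eta\subset\Lambda$, and for general $\eta$ gives the restriction built from $R_0^\Lambda$. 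Since $\mu_0\in\mathcal{P}_{\rm sP}$, the bound \eqref{I4} gives $q_0^{\Lambda,N}(\eta)\leq k_{\mu_0}(\eta)\leq C e^{\alpha_0|\eta|}$ uniformly in $N$ and $\Lambda$ (one must take $\alpha_0$ so that $k_{\mu_0}\in\mathcal{K}_{\alpha_0}$ and $\alpha_0<\alpha_1$), so $|G_t(\eta)|q_0^{\Lambda,N}(\eta)\leq |G_t(\eta)|\,C e^{\alpha_0|\eta|}$, which is $\lambda$-integrable because $G_t\in\mathcal{G}_{\alpha_1}\subset\mathcal{G}_{\alpha_0}$ by \eqref{z5}. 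Dominated convergence then gives $\lim_{N}\langle\!\langle G_t,q_0^{\Lambda_n,N}\rangle\!\rangle = \langle\!\langle G_t, q_0^{\Lambda_n}\rangle\!\rangle$, where $q_0^{\Lambda_n}(\eta)=\int_{\Gamma_0}R_0^{\Lambda_n}(\eta\cup\xi)\lambda(d\xi)$ and $R_0^{\Lambda_n}$ is the extension-by-zero of the density of $\mu_0^{\Lambda_n}$.

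Next I would let $n\to+\infty$. By Kolmogorov consistency of the family $\{\mu_0^\Lambda\}$ and \eqref{9AA}, for $\eta\subset\Lambda_n$ one has $q_0^{\Lambda_n}(\eta)=k_{\mu_0}(\eta)$ exactly; for $\eta\not\subset\Lambda_n$, $q_0^{\Lambda_n}(\eta)=0\leq k_{\mu_0}(\eta)$. Thus $q_0^{\Lambda_n}(\eta)\to k_{\mu_0}(\eta)$ pointwise (every finite $\eta$ is eventually contained in some $\Lambda_n$ because the sequence is cofinal), and $0\leq q_0^{\Lambda_n}(\eta)\leq k_{\mu_0}(\eta)\leq Ce^{\alpha_0|\eta|}$ uniformly in $n$, which is again $|G_t|$-integrable. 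Dominated convergence gives $\lim_n\langle\!\langle G_t, q_0^{\Lambda_n}\rangle\!\rangle = \langle\!\langle G_t, k_{\mu_0}\rangle\!\rangle$, completing the proof. The main obstacle, and the step requiring the most care, is the first one: one must verify that $q_0^{\Lambda_n}$ genuinely agrees with $k_{\mu_0}$ on $\Gamma_{\Lambda_n}$ and vanishes elsewhere — this uses the Kolmogorov consistency that underlies \eqref{9AA} together with the fact that the extension in the first formula of \eqref{y8} is by zero — and that the uniform sub-Poissonian domination $q_0^{\Lambda_n,N}\leq k_{\mu_0}$ really is uniform in both parameters, so that the single integrable majorant $|G_t(\eta)|Ce^{\alpha_0|\eta|}$ works for all the limits simultaneously; once these domination facts are in place the two dominated-convergence passages are routine, and the reduction via the predual operator $H_{\alpha_1\alpha_2}(t)$ from Lemma \ref{Qq1lm} is what makes the bounded operator $Q_{\alpha_2\alpha_1}(t;B^\Delta_b)$ harmless under the limit.
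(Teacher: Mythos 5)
Your proof is correct and follows essentially the same route as the paper: the paper likewise moves $Q_{\alpha_2\alpha_1}(t;B^\Delta_b)$ onto $G$ via the predual operator $H_{\alpha_1\alpha_2}(t)$ from Lemma \ref{Qq1lm} (i.e.\ via \eqref{Qq1}), reducing \eqref{Augg} to the convergence of $\langle\!\langle G_t, q_0^{\Lambda_n,N}\rangle\!\rangle$ with $G_t \in \mathcal{G}_{\alpha_1}$. The only difference is that the paper delegates this remaining step to the appendix of \cite{Berns}, whereas you carry it out explicitly through the monotone/dominated convergence argument based on the uniform sub-Poissonian majorant $q_0^{\Lambda_n,N}\leq k_{\mu_0}$ and the identification $q_0^{\Lambda_n}=k_{\mu_0}$ on $\Gamma_{\Lambda_n}$ coming from \eqref{9AA}.
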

\begin{proof}
As in (\ref{qb3}), we prove (\ref{Augg}) by showing that
\begin{gather}
\label{Augs} \lim_{n \to +\infty}\lim_{N \to +\infty} \langle  \!
\langle  G, Q_{\alpha_2\alpha_1}(t;B^\Delta_b)
q_{0}^{\Lambda_n,N}\rangle
  \!\rangle \\[.2cm] = \lim_{n \to +\infty}\lim_{N
\to +\infty} \langle  \! \langle H_{\alpha_1 \alpha_2} (t)G,
 q_{0}^{\Lambda_n,N}\rangle
  \!\rangle =  \langle  \! \langle H_{\alpha_1 \alpha_2}(t) G,
 k_{\mu_0}\rangle\!\rangle. \nonumber
\end{gather}
Since $G_t := H_{\alpha_1 \alpha_2}(t) G$ lies in
$\mathcal{G}_{\alpha_1}$, the proof of (\ref{Augs}) can be done by
the repetition of arguments used in the proof of the analogous
result in \cite[Appendix]{Berns}.
\end{proof}

\subsection{The proof of Lemma \ref{Wolflm}}

Let $\alpha_1$ and $\alpha_2$ be as in Lemma \ref{Wolflm} and
$\{\Lambda_n\}_{n\in \mathbb{N}}$ be a cofinal sequence. Take
$k_{\mu_0} \in \mathcal{K}_{\alpha_1}$ and then produce
$q_0^{\Lambda_n,N}$, $n\in \mathbb{N}$, by employing (\ref{y8}) and
(\ref{y172}). Let $T(\alpha_2, \alpha_1)< T(\alpha_2,
\alpha_1;B^\Delta_b)$ be such that (\ref{qa5}) holds with
\[
k_t^{\Lambda_n,N} = Q^\sigma(\alpha_2,\alpha_1;B^\Delta_b)
q_0^{\Lambda_n,N}, \qquad t\leq T(\alpha_2, \alpha_1).
\]
Note that $T(\alpha_2, \alpha_1)$ is independent of $\Lambda_n$ and
$N$, see Corollary \ref{qaco}. By Lemma \ref{lastlm} we then have
that
\[
\langle  \! \langle  G, Q^\sigma_{\alpha_2\alpha_1}(t;B^\Delta_b
)k_{\mu_0}\rangle
  \!\rangle \geq 0.
\]
Now we apply Lemma \ref{qblm} and obtain
\[
\langle  \! \langle  G, Q_{\alpha_2\alpha_1}(t;B^\Delta_b
)k_{\mu_0}\rangle
  \!\rangle \geq 0,
\]
which holds for $$t\leq \tau(\alpha_2, \alpha_1) :=\min\left\{
T(\alpha_2, \alpha_1); \widetilde{T}(\alpha_2, \alpha_1) \right\},
$$
which completes the proof.

\section*{Acknowledgment}
This paper was partially supported by the DFG through the SFB 701
``Spektrale Strukturen und Topologische Methoden in der Mathematik''
and by the European Commission under the project STREVCOMS
PIRSES-2013-612669.


\begin{thebibliography}{00}


\bibitem{Albev} Albeverio, S., Kondratiev, Yu. G.,  R\"ockner, M.:
Analysis and geometry on configuration spaces.  J. Func. Anal. {\bf
154},  444--500 (1998)

\bibitem{Angel} Angelescu, N., Nenciu, G., Protopopescu, V.: On
stable potentials. Comm. Math. Phys. {\bf 22}, 162--165 (1971)

\bibitem{BB} Bellomo, N., Brezzi, F.: Mathematics, complexity and multiscale features
of large systems of self-propelled particles. Math. Models Methods
Appl. Sci. {\bf 26}, 207--214 (2016)

\bibitem{Berns} Berns, Ch., Kondratiev, Yu., Kozitsky, Yu., Kutovyi, O.: Kawasaki dynamics in continuum: Micro- and
mesoscopic descriptions. J. Dynam. Differential Equations. {\bf 25},
1027--1056 (2013)

\bibitem{BP1}  Bolker, B. M.,  Pacala, S. W.:
Using moment equations to understand stochastically driven spatial
pattern formation in ecological systems. Theoret. Population Biol.
{\bf 52}, 179--197  (1997)

%\bibitem{BP2} Bolker, B. M.,  Pacala, S. W.: Spatial moment equations for plant competitions:
%Understanding spatial strategies and the advanteges of short dispersal. The American Naturalist. {\bf 153},  575--602 (1999)

\bibitem{BP3} Bolker, B. M.,  Pacala, S. W., Neuhauser, C.:
Spatial dynamics in model plant communities: What do we really know?
The American Naturalist. {\bf 162},  135--148 (2003)

\bibitem{CS} Conway, J. H., Sloan, N. J. A.: Sphere Packing,
Lattices and Groups.  Third edition. With additional contributions
by E. Bannai, R. E. Borcherds, J. Leech, S. P. Norton, A. M.
Odlyzko, R. A. Parker, L. Queen and B. B. Venkov. Grundlehren der
Mathematischen Wissenschaften, 290. Springer-Verlag, New York, 1999


%\bibitem{Cox}  Cox, J. T.: Coalescing random walks and voter model
%consensus times on the torus in $\mathbb{Z}^d$.  Ann. Probab.
%{\bf 17}  1333--1366 (1989)


%\bibitem{Dob} Dobrushin, R. L., Sinai, Ya. G., Suhov, Yu. M.:
%Dynamical systems of statistical mechanics. In: Itogi Nauki, pp.
%235--284. VINITY (1985); eng. trans. in: Sinai, Ya. G. (ed.) Ergodic
%Theory with Applications to Dynamical Systems and Statistical
%Mechanics, II, Encyclopaedia Math. Sci., Springer, Berlin Heidelberg
%(1989)

\bibitem{Dima}  Finkelshtein,  D. L.,  Kondratiev, Yu. G., Kutovyi, O.:
Individual based model with competition in spatial ecology.  SIAM J.
Math. Anal. {\bf 41}, 297--317 (2009)

\bibitem{DimaN2}  Finkelshtein,  D. L.,  Kondratiev, Yu. G., Kutovyi, O.:
Semigroup approach to birth-and-death stochastic dynamics in
continuum. J. Funct. Anal. {\bf 262},  1274–-1308 (2012)


\bibitem{FKKK}  Finkelshtein,  D. L.,  Kondratiev, Yu. G., Kozitsky, Yu., Kutovyi, O.:
The statistical dynamics of a spatial logistic model and the related
kinetic equation. Math. Models Methods Appl. Sci. {\bf 25}, 343--370
(2015)

\bibitem{FKO}  Finkelshtein,  D. L.,  Kondratiev, Yu. G., Oliveira,
M. J.: Markov evolutions and hierarchical equations in the
continuum. I. One-component systems. J. Evol. Equ. {\bf 9}, 197--233
(2009)

\bibitem{Gar} Garcia, N. L., Kurtz, T. G.: Spatial birth and death processes as solutions of stochastic equations.
 ALEA Lat. Am. J. Probab. Math. Stat. {\bf 1}, 281--303  (2006)


\bibitem{HS1978}
 Holley, R.~A., Stroock, D.~W.: Nearest neighbor birth and death
processes on the real line.  Acta Math. {\bf 140},  103--154 (1978)


\bibitem{Tobi}  Kondratiev, Yu.,  Kuna, T.: Harmonic analysis on configuration space. I. General
theory. Infin. Dimens. Anal. Quantum Probab. Relat. Top. {\bf 5},
201--233 (2002)

\bibitem{TobiJoao}  Kondratiev, Yu.,  Kuna, T., Oliveira, M. J.: Holomorphic Bogoliubov
functionals  for interacting particle systems in continuum. J. Funct. Anal. {\bf 238}, 375--404 (2006)

\bibitem{KKM} Kondratiev, Yu., Kutoviy,  O., Minlos, R.: On non-equilibrium stochastic dynamics
for interacting particle systems in continuum, J. Funct. Anal. {\bf 255},  200–-227 (2008)


\bibitem{KKP}  Kondratiev, Yu., Kutovyi,  O.,  Pirogov S.:
Correlation functions and invariant measures in continuous contact
model. Infin. Dimens. Anal. Quantum Probab. Relat. Top. {\bf 11}
231--258  (2008)


\bibitem{KS}  Kondratiev, Yu. G.,  Skorokhod,  A. V.: On contact models in continuum.
Infin. Dimens. Anal. Quantum Probab. Relat. Top. {\bf 9}  187--198
(2006)


\bibitem{K} Kozitsky, Yu.: Dynamics of spatial logistic model: finite systems. In: Banasiak, J., Bobrowski, A., Lachowicz, M.
(eds.) Semigroups of Operators -- Theory and Applications:
B\c{e}dlewo, Poland, October 2013. Springer Proceedings in
Mathematics \& Statistics 113, Springer 2015, pp. 197--211

\bibitem{Mu}  Murrell, D. J.,  Dieckmann, U.,  Law, R.: On moment closures for population
dynamics in contunuous space.  J. Theoret. Biol. {\bf 229},
421--432 (2004)

\bibitem{N} Neuhauser, C.: Mathematical challenges in spatial
ecology. Notices of AMS {\bf 48} (11), 1304--1314 (2001)

\bibitem{Obata} Obata, N.:
Configuration space and unitary representations of the group of
diffeomorphisms.  RIMS K\^{o}ky\^{u}roku {\bf 615}, 129--153 (1987)


\bibitem{O} Ovaskainen, O., Finkelshtein, D., Kutovyi, O., Kornel,
S.,  Bolker, B., Kondratiev, Yu.: A general mathematical framwork
for the analysis of spatio-temporal point processes. Theor. Ecol.
{\bf 7}, 101--113 (2014)

\bibitem{Pazy} Pazy, A.: Semigroups of Linear Operators and Applications to Partial Differential Equations.
Applied Mathematical Sciences, 44. Springer-Verlag, New York, 1983

\bibitem{PechZ} Pechersky, E. A., Zhukov, Yu.: Uniqueness of Gibbs state for non-
ideal gas in $R^d$: The case of pair potentials. J. Stat. Phys. {\bf 97}, 145--172 (1999)


\bibitem{Ruelle} Ruelle, D.: Statistical Mechanics: Rigorous
Results. W. A. Benjamin, Inc., 1969

\bibitem{TV}  Thieme, H. R.,  Voigt, J.: Stochastic semigroups: their construction by perturbation and approximation,
 {\it Positivity IV---Theory and Applications,} eds. M. R. Weber and
 J. Voigt (Tech. Univ. Dresden, Dresden, 2006), pp. 135--146

%\bibitem{trev} F. Tr{\`e}ves, {\it Ovcyannikov Theorem and Hyperdifferential Operators,} Notas de Matem{\'a}tica, No.~46 (Instituto de Matem{'a}tica Pura e Aplicada, Conselho Nacional de Pesquisas, Rio de Janeiro, 1968).


\end{thebibliography}
\end{document}